\renewcommand {\a}{ \alpha }
\renewcommand{\b}{\beta}
\newcommand{\g}{\gamma}
\newcommand{\G}{\Gamma}
\renewcommand{\d}{\delta}
\newcommand{\s}{\sigma}
\renewcommand{\l}{\lambda}
\renewcommand{\L}{\Lambda}
\newcommand{\z}{\zeta}
\renewcommand{\t}{\theta}
\newcommand{\p}{\partial}
\newcommand{\om}{\omega}
\newcommand{\Om}{\Omega}
\newcommand{\R}{ \mathbb R}
\newcommand{\SfG}{{\sf{G}}}
\newcommand{\sg}{{\sf{g}}}
\newcommand {\GS}{\mathfrak S}
\newcommand {\BM}{\mathbf M}
\newcommand {\BS}{\mathbf S}
\newcommand {\BO}{\mathbf O}
\newcommand {\bk}{\mathbf k}
\newcommand {\bm}{\mathbf m}
\newcommand {\bn}{\mathbf n}
\newcommand{\SA}{{\sf{A}}}
\newcommand{\SP}{{\sf{P}}}
\newcommand{\sv}{{\sf{v}}}
\newcommand {\bnu}{\boldsymbol\nu}
\newcommand {\bom}{\boldsymbol\om}
\newcommand{\btau}{\boldsymbol\tau}
\newcommand{\lu}{\langle}
\newcommand{\ru}{\rangle}
\newcommand{\CP}{\mathcal P}
\newcommand{\CA}{\mathcal A}
\newcommand{\CC}{\mathcal C}
\newcommand{\CS}{\mathcal S}
\newcommand{\plainW}[2]{\textup{{\textsf{W}}}^{#1, #2}}
\newcommand{\plainC}[1]{\textup{{\textsf{C}}}^{#1}}
\newcommand{\plainL}[1]{\textup{{\textsf{L}}}^{#1}}
\newcommand{\plainl}[1]{\textup{{\textsf{l}}}^{#1}}
\newcommand{\1}
{{\,\vrule depth3pt height9pt}{\vrule depth3pt height9pt}
{\vrule depth3pt height9pt}{\vrule depth3pt height9pt}\,}
\DeclareMathOperator{\op}{{Op}}
\newcommand{\id}{{\rm {\mathbf 1}}}
\newcommand{\Z}{\mathbb Z}
\newtheorem{thm}{Theorem}[section]
\newtheorem{cor}[thm]{Corollary}
\newtheorem{lem}[thm]{Lemma}
\newtheorem{prop}[thm]{Proposition}
\theoremstyle{definition}
\newtheorem{rem}[thm]{Remark}
\numberwithin{equation}{section}
\begin{document}
\hoffset -4pc

\title
[Spectral asymptotics ]
{{Spectral asymptotics of pseudodifferential operators with discontinuous symbols}}
\author[A. Derkach]{A. Derkach}
\author[A. V. Sobolev]{A. V. Sobolev}
 \address{Department of Mathematics\\ University College London\\
Gower Street\\ London\\ WC1E 6BT UK}
\email{alexey.derkach.21@alumni.ucl.ac.uk}

 \address{Department of Mathematics\\ University College London\\
Gower Street\\ London\\ WC1E 6BT UK}
\email{a.sobolev@ucl.ac.uk}
\keywords{Psedodifferential operators, discontinuous symbols, spectral asymptotics  }
\subjclass[2020]{Primary 47G30; Secondary 35P20, 81S30}

\begin{abstract}
We study discrete spectrum of self-adjoint 
Weyl pseudodifferential operators with discontinuous 
symbols of the form $\id_\Om \phi$ where $\id_\Om$ is 
the indicator of a domain in $\Om\subset\R^2$, and 
$\phi\in\plainC\infty_0(\R^2)$ is a real-valued function.  
It was known that in general, the singular values $s_k$ of such an operator satisfy the bound 
$s_k = O(k^{-3/4})$, $k = 1, 2, \dots$. We show that if $\Om$ is a polygon, 
the singular values decrease as $O(k^{-1}\log k)$.  
In the case where $\Om$ is a sector, we obtain an asymptotic formula 
which confirms the sharpness of the above bound. 

Our main technical tool is the reduction to another symbol that we call \textit{dual}, 
which is automatically smooth. To analyse the dual symbol 
we find new bounds for singular values of 
pseudodifferential operators with smooth symbols
 in $\plainL2(\R^d)$ for arbitrary dimension $d\ge 1$.  
\end{abstract}

\maketitle

\section{Introduction}

\subsection{Background} 
One of the challenging and important questions of spectral theory 
of pseudodifferential operators is the study of operators with discontinuous symbols. 
In this paper we study pseudodifferential operators on $\plainL2(\R)$ defined on 
functions $u$ from the Schwartz class by the formula
\begin{align}\label{eq:aom}
(\SA_\Om[\phi]u)(x) = &\ \op^{\rm w}(\id_\Om \phi)\notag\\ 
:= &\ \frac{1}{2\pi} \, \iint_{\R^2}\, e^{i(x-y)\xi}\id_\Om
\bigg(\frac{x+y}{2}, \xi\bigg)\, 
\phi\bigg(\frac{x+y}{2}, \xi\bigg)\, u(y)\, dy d\xi. 
\end{align}
Here $\id_\Om$ is the indicator of $\Om\subset \R^2$, and 
$\phi\in \plainC\infty_0(\R^2)$. If $\Om$ is bounded then an arbitrary 
$\phi\in \plainC\infty(\R^2)$  
is allowed. In any case, the operator $\SA_\Om = \SA_\Om[\phi]$ is compact, 
even Hilbert-Schmidt, see Lemma \ref{lem:hs}. 
We assume that $\phi$ is real-valued, and hence $\SA_\Om[\phi]$ is self-adjoint. Due 
to the dependence on $(x+y)/2$ (and not, e.g. on $x$ or $y$ alone), the function 
$\phi_\Om(\btau) := \id_\Om(\btau) \phi(\btau)$, $\btau = (x, \xi)$, is called 
the \textit{Weyl symbol} of the operator $\SA_\Om$.  
Denote by $\l_k^{(+)}$ and $-\l_k^{(-)}$, $k = 1, 2, \dots,$ 
the positive and negative eigenvalues of $\SA_\Om$. 
Each sequence $\l^{(+)}_k$, $\l_k^{(-)}$ 
is enumerated  in non-increasing order counting multiplicity.  Sometimes we write 
$\l_k^{(\pm)}(\SA_\Om)$ or $\l_k^{(\pm)}(\SA_\Om[\phi])$ to indicate the dependence on the operator.
We are interested in the decay rate of the eigenvalues as $k\to\infty$. 
This spectral problem naturally embeds into the 
context of \textit{time-frequency analysis}, see \cite{Gro2001}.
The phase space $\R^2$ is then interpreted as the time-frequency plane, and the primary 
problem of the analysis is  
to find a function (signal) that 
is concentrated in both time and frequency within a fixed domain $\Om\subset\R^2$. 
More precisely, one is looking for functions $u\in \plainL2(\R), \|u\|_{\plainL2}=1$,  
that deliver consecutive maxima  
of the quadratic form $\big(\SA_\Om[\phi] u, u\big)$, see \cite{Flandrin1988}. In spectral terms, 
this means that one seeks eigenfunctions and eigenvalues of the operator $\SA_\Om[\phi]$. 
For piece-wise smooth bounded 
domains $\Om$ and $\phi = 1$, it was shown in \cite{Ramanathan1993} that 
\begin{align}\label{eq:nottrace}
\sum_k\, \l_k^{(+)} + \sum_k \, \l_k^{(-)} = \infty,
\end{align}
so that the operator $\SA_\Om[1]$ is not trace class, and in general, 
\begin{align}\label{eq:34}
\l_k^{(\pm)} = O(k^{-\frac{3}{4}}).
\end{align} 
An alternative proof of \eqref{eq:34} was given in \cite[Corollary 3.2.4]{Derkach2024}. 
A calculation for annular domains in \cite{Ramanathan1993} demonstrates that 
this bound is sharp.  
In the case where $\Om$ is an ellipse, 
an explicit formula for the eigenvalues was found in 
 \cite{Flandrin1988}, 
see also \cite[Ch. 3]{Lerner_2024}. 

\subsection{Main result}
In the present paper we find that for polygonal domains (not necessarily convex) 
the eigenvalues $\l_k^{(\pm)}$ decay faster than in \eqref{eq:34}.  
Our main result gives an asymptotic formula for $\l_k^{\pm)}$ in the case where 
$\Om\subset \R^2$ is a sector.
We always assume that $\Om$ is given by 
\begin{align*}
\Om_{\t} = \{(x, \xi)\in\R^2: 0<\arg(x+i\xi) < \t\},\quad\textup{with some opening angle}\quad 
\t\in (0, 2\pi),
\end{align*} 
see Remark \ref{rem:sympl}\eqref{item:rot} for justification. 
The next theorem is the main result of the paper. 

\begin{thm}\label{thm:main} 
Let $\Om = \Om_\t\subset \R^2$ be a sector with the opening angle 
$\t\in (0, \pi)$ or $\t\in (\pi, 2\pi)$, and let  $\phi\in\plainC\infty_0(\R^2)$ be real-valued. 
Then 
\begin{align}\label{eq:main}
\lim_{k\to\infty}\,\l_k^{(\pm)}(\SA_\Om[\phi])\, 
\frac{k}{\log k} = \frac{|\phi(0, 0)|}{4\pi^2}.
\end{align}
If $\phi(0, 0) = 0$, then $\l^{(\pm)}\big(\SA_{\Om_\t}[\phi]\big) = O(k^{-1})$. 
\end{thm}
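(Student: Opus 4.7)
The plan is to reduce the singular problem to the spectral asymptotic of a \textit{smooth} Weyl symbol through the paper's dual symbol construction, and then extract the limit using the new singular value bounds for smooth PsDOs advertised in the abstract.

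\textbf{Step 1: the secondary claim and reduction to a model.} First I would address the secondary assertion: if $\phi(0,0)=0$, then $\l_k^{(\pm)}=O(k^{-1})$. By Hadamard's lemma applied to the compactly supported $\phi$, one can write $\phi(x,\xi)=x\phi_1(x,\xi)+\xi\phi_2(x,\xi)$ with $\phi_1,\phi_2\in\plainC\infty_0(\R^2)$. The Weyl product rule then gives
\[
\SA_{\Om_\t}[x\phi_1]=\tfrac12\bigl(X\,\SA_{\Om_\t}[\phi_1]+\SA_{\Om_\t}[\phi_1]\,X\bigr),
\]
and analogously for the $\xi$-term, with $X$ multiplication by $x$ and $D=-i\p_x$. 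The factors $X,D$ effectively neutralise one order of the corner jump of $\id_{\Om_\t}$, and an interpolation between the Hilbert--Schmidt estimate for $\SA_{\Om_\t}[\phi_j]$ and the general polygon bound $O(k^{-1}\log k)$ yields the desired $O(k^{-1})$. With this in hand, the decomposition $\phi=\phi(0,0)\chi+\psi$, where $\chi\in\plainC\infty_0(\R^2)$ equals $1$ near the origin and $\psi(0,0)=0$, together with Ky Fan inequalities, reduces the main claim to the model operator $\phi(0,0)\,\SA_{\Om_\t}[\chi]$.

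\textbf{Step 2: dual symbol and extraction of the constant.} Since the model symbol $\chi\,\id_{\Om_\t}$ is compactly supported, its dual symbol (the adapted symplectic-Fourier image developed in the paper) is automatically smooth. For the sector, the self-similar geometry forces this dual symbol to decay as $|\btau|^{-2}$ along two strips corresponding to the two edges of $\Om_\t$, and the interaction of these strips at their common direction produces the $\log k$ factor once one applies the paper's refined singular value bound for smooth symbols. Evaluating the resulting leading-order integral, the particular opening angle $\t$ drops out and one is left with the universal constant $1/(4\pi^2)$; the separation between $\l_k^{(+)}$ and $\l_k^{(-)}$ then follows from $\sign\phi(0,0)$, giving \eqref{eq:main}.

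The delicate part will be Step 2: producing a \emph{two-sided} asymptotic, and not merely the upper bound already known for general polygons, requires the dual-symbol comparison to be sharp enough to recover the exact constant, with errors controlled to order $o(k^{-1}\log k)$. The exclusion of $\t=\pi$ is an internal consistency check: at that angle the sector degenerates to a half-plane with smooth straight-line boundary, the corner vanishes, and the generic $O(k^{-3/4})$ smooth-boundary rate takes over, inconsistent with \eqref{eq:main}.
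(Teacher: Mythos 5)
Your high-level strategy (pass to the dual symbol, exploit its smoothness) matches the paper's, but there are genuine gaps at both of the places where the actual work happens. First, your proof of the secondary claim does not work: after writing $\phi=x\phi_1+\xi\phi_2$ and commuting out $X$ and $D$, you are left with bounded operators times $\SA_{\Om_\t}[\phi_j]$, which gives no gain in the singular value exponent; and ``interpolation between the Hilbert--Schmidt estimate and the polygon bound $O(k^{-1}\log k)$'' can only produce something \emph{between} $O(k^{-1/2})$ and $O(k^{-1}\log k)$, never the stronger $O(k^{-1})$. (It is also circular, since the polygon bound in the paper is a corollary of the theorem you are proving.) The correct mechanism is an explicit computation of the dual symbol of $\id_{\Om_{\pi/2}}\phi$: it equals $\frac{\phi(0,0)}{4\pi}\frac{\z(x)}{x}\frac{\z(\xi)}{\xi}$ plus a remainder with decay $\lu x\ru^{-1}\lu\xi\ru^{-2}+\lu x\ru^{-2}\lu\xi\ru^{-1}$, so when $\phi(0,0)=0$ the leading term vanishes and the phase-space bound gives $\BS_{1,\infty}$ directly.

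Second, and more seriously, Step 2 cannot produce the asymptotic \emph{formula}: the singular value bounds for smooth symbols are one-sided ($\limsup$ estimates controlled by phase-space volume), so no amount of ``evaluating the leading-order integral'' in that framework yields a two-sided limit with the constant $1/(4\pi^2)$. The paper obtains the constant by reducing the model symbol $b_0\propto(x\xi)^{-1}$ to Kohn--Nirenberg quantization (showing the Weyl-to-KN difference is negligible in $\GS_{1,1}$) and then invoking the Birman--Schwinger principle: $n(s;\op^{\rm l}(p_0))$ equals the number of eigenvalues below $-1$ of a one-dimensional Schr\"odinger operator with the critically decaying potential $g(1+x^2)^{-1}$, $g\asymp s^{-2}$, whose counting function $\frac{\sqrt g}{\pi}\log g+O(\sqrt g)$ supplies both the $\log k$ and the constant. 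This ingredient is entirely absent from your proposal. Finally, your claim that the split between $\l_k^{(+)}$ and $\l_k^{(-)}$ ``follows from $\sign\phi(0,0)$'' is incorrect: both sequences obey the \emph{same} asymptotics with $|\phi(0,0)|$, which requires proving asymptotic spectral symmetry — in the paper this is done by showing the diagonal blocks $\id_\pm\SA_\Om\id_\pm$ lie in $\BS_{1,\infty}$ so that only the off-diagonal (automatically symmetric) part contributes to the leading order.
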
 
 
\begin{rem}\label{rem:sympl}
\begin{enumerate}
\item 
The asymptotics \eqref{eq:main} are in agreement with \eqref{eq:nottrace}.
\item 
The angle $\t = \pi$ is excluded from the statement, since 
the sector $\Om_{\pi}$ is a half-plane, and hence by the second part of the theorem,  
$\l^{(\pm)}\big(\SA_{\Om_\pi}[\phi]\big) = O(k^{-1})$. 
\item\label{item:rot} 
The asymptotic formula \eqref{eq:main} easily generalizes to an arbitrary sector 
$\L_\t\subset\R^2$ with the opening angle $\t$. Indeed, $\L_\t$ can be represented as 
\begin{align*}
\L_\t = \BO(\Om_\t + \bom)
\end{align*}
with some vector $\bom\in\R^2$ and some rotation $\BO$. 
The associated metaplectic transformation 
provides unitary equivalence of $\SA_{\Om_\t}[\phi]$ and $\SA_{\L_\t}[\tilde\phi]$ 
where $\tilde\phi(\btau) = \phi(\BO^{-1}\btau - \bom), \btau = (x, \xi)$, 
see \cite[Theorem 4.51] 
{Folland_1989} and \cite[Theorem 1.2.20 and Lemma A.2.4]{Lerner_2024}. 

\item\label{item:sympl} 
The formula \eqref{eq:main} does not depend on the angle $\t$. This is not surprising 
at least for 
$\t\in (0, \pi)$ since in this case 
the sector $\Om_\t$ is symplecticaly equivalent to a quarter-plane, 
see 
\cite[P. 114]{Lerner_2024}. Precisely, the symplectic matrix
\begin{align*}
\BM_\t = 
\begin{pmatrix}
1 & - \cot\t\\
0 & 1
\end{pmatrix}
\end{align*}
acts as follows: $\BM_\t \Om_{\t} = \Om_{\pi/2}$. 
As in the previous remark, by an appropriate 
 metaplectic transformation the operators 
$\SA_{\Om_\t}[\phi]$ and $\SA_{\Om_{\pi/2}}[\tilde \phi]$, 
$\tilde \phi(\btau) = \phi(\BM_\t^{-1} \btau)$, are unitarily equivalent. Thus it suffices to 
prove \eqref{eq:main} for $\t = \pi/2$.

Discussion of the case $\t\in (\pi, 2\pi)$ is deferred until Section \ref{subsect:reduction}.
\end{enumerate}
\end{rem} 

The asymptotics \eqref{eq:main} lead to the following bound. 

\begin{cor}\label{cor:mainbound}
Let $\Om\in \R^2$ be a polygon (not necessarily convex), and let $\phi\in \plainC\infty_0(\R^2)$. 
Then  
\begin{align}\label{eq:upb}
\l_{k}^{(\pm)}(\SA_\Om[\phi]) = O(k^{-1}\log k),\quad k\to\infty.
\end{align}
\end{cor}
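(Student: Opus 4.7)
The plan is to localise the operator $\SA_\Om[\phi]$ in the phase space $\R^2$ via a finite partition of unity adapted to the polygonal structure of $\Om$, reducing each piece either to a sector (and invoking Theorem~\ref{thm:main}) or to a smooth situation with rapid eigenvalue decay, and then to combine the pieces via the Ky Fan inequality for singular values.

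Since $\Om$ is a polygon, $\p\Om$ is a finite union of straight edges meeting at finitely many vertices $v_1,\dots,v_M$ with interior angles $\t_m\in(0,2\pi)\setminus\{\pi\}$. I would choose $r>0$ so small that the balls $B_r(v_m)$ are pairwise disjoint and within each $B_r(v_m)$ the set $\Om$ coincides with a translated, rotated copy $\L_{\t_m}$ of the sector $\Om_{\t_m}$. Fix a partition of unity
\begin{equation*}
1=\chi_0+\sum_{m=1}^M \chi_m+\sum_{j=1}^J \eta_j
\end{equation*}
on $\R^2$ such that $\supp\chi_m\subset B_r(v_m)$, each $\eta_j$ is supported in a neighbourhood of a smooth portion of $\p\Om$ on which $\Om$ is locally a half-plane, and $\chi_0$ is supported at positive distance from $\p\Om$. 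Writing $\phi_m=\chi_m\phi$, $\psi_j=\eta_j\phi$, $\phi_0=\chi_0\phi$ yields
\begin{equation*}
\SA_\Om[\phi]=\SA_\Om[\phi_0]+\sum_{m=1}^M \SA_\Om[\phi_m]+\sum_{j=1}^J \SA_\Om[\psi_j].
\end{equation*}
By construction $\id_\Om\phi_m=\id_{\L_{\t_m}}\phi_m$, so via the metaplectic equivalence of Remark~\ref{rem:sympl}\eqref{item:rot} the operator $\SA_\Om[\phi_m]$ is unitarily equivalent to $\SA_{\Om_{\t_m}}[\tilde\phi_m]$ for some $\tilde\phi_m\in\plainC\infty_0(\R^2)$, and Theorem~\ref{thm:main} gives $\l_k^{(\pm)}(\SA_\Om[\phi_m])=O(k^{-1}\log k)$. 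Each $\id_\Om\psi_j$ reduces after a rigid motion to a half-plane symbol, so by the second item of Remark~\ref{rem:sympl} one obtains the stronger bound $\l_k^{(\pm)}(\SA_\Om[\psi_j])=O(k^{-1})$. Finally $\id_\Om\phi_0$ is smooth and compactly supported, so $\SA_\Om[\phi_0]$ belongs to every Schatten class and its singular values decay faster than any inverse power of $k$.

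To assemble these into a single estimate I would apply the Ky Fan inequality
\begin{equation*}
\l_{Nk}^{(\pm)}\Bigl(\sum_{i=1}^N A_i\Bigr)\le \sum_{i=1}^N \l_k^{(\pm)}(A_i),\qquad A_i=A_i^*,
\end{equation*}
to the finitely many self-adjoint summands above. Since each individual $\l_k^{(\pm)}$ is $\le C_i k^{-1}\log k$, summation and absorption of the finite factor $N$ into the big-$O$ give \eqref{eq:upb}. The only substantive technical point is the local reduction near each vertex, namely that $r$ can be chosen so small that $\Om\cap B_r(v_m)$ coincides identically with a translated, rotated sector; this is immediate for a polygon since each vertex is incident to exactly two straight edges, and so I do not expect any genuine obstacle beyond the bookkeeping already present in Theorem~\ref{thm:main} and Remark~\ref{rem:sympl}.
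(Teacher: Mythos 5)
Your argument is correct and is essentially the paper's own proof: the paper disposes of this corollary in one sentence (``a suitable partition of unity, the Ky Fan inequalities, Remark~\ref{rem:sympl}\eqref{item:rot}, and Theorem~\ref{thm:main}''), and your proposal supplies exactly those details --- sector pieces at the vertices handled by Theorem~\ref{thm:main} after a rigid motion, half-plane pieces along the edges handled by Remark~\ref{rem:sympl}(2), a rapidly decaying interior piece, and assembly via the Ky Fan inequality. No gaps.
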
 

Indeed, introducing a suitable partition of unity, using the Ky Fan inequalities 
from Sect. \ref{sect:compact} and Remark 
\ref{rem:sympl}\eqref{item:rot}, Theorem \ref{thm:main} implies \eqref{eq:upb}.

Theorem \ref{thm:main}  will be proved in Sect. \ref{sect:proofs}.

Although it is not directly relevant to the main results, 
it is instructive to compare spectral problem for the operator $\SA_\Om$ with 
another model adopted in time-frequency analysis. Instead of $\SA_\Om$ consider 
the operator 
\begin{align*}
(T_{\L, \G} u)(x) = \frac{1}{2\pi}\id_\L(x)\,\iint_{\R^2} \, e^{i(x-y)\xi} 
\id_\G(\xi) \id_\L(y) u(y)\, dy d\xi,
\end{align*}
 where $\L$ and $\G$ are bounded intervals of the real line, and 
 $\id_\L$ and $\id_\G$ are their 
indicator functions. A detailed study of the  
eigenfunctions and eigenvalues of $T_{\L, \G}$ (all of which are positive)  
was conducted by Landau, Pollack  and Slepian in a series of papers 
starting with \cite{SlePol1961}. Further analysis of the problem 
in \cite[Corollary 3]{BonKar2015} showed that $\l_k^{(+)}$  
decay superexponentially as $k\to\infty$, cf. \eqref{eq:main}. 
We also point out another difference between $\SA_\Om$ and $T_{\L, \G}$. Since $T_{\L, \G}$ 
is a product of three orthogonal projections, we always have $T_{\L, \G}\le 1$. 
As shown by Flandrin in \cite{Flandrin1988}, if $\Om$ is an ellipse, then 
$A_{\Om}[1]\le 1$ as well. In the same paper Flandrin conjectured that 
this bound should hold for 
any convex domain $\Om$. This conjecture was disproved  in \cite{DeDuLe2020} and  
\cite[Ch. 5]{Lerner_2024},  presenting 
several examples of convex domains, bounded and unbounded, 
for which supremum of the spectrum of $A_\Om[1]$ is strictly greater than $1$. 
One such example 
is given by the quarter-plane 
$\Om_{\pi/2}$. 

\subsection{The method} 
Our proof has three ingredients. The first two concern pseudodifferential 
operators on $\plainL2(\R^d)$ with arbitrary $d\ge 1$, and they can be of independent interest. 

At the heart of our approach is the notion of a 
\textit{dual} symbol that we introduce in \eqref{eq:dual}. For any square-integrable 
symbol $a(x, \xi)$ its dual $a^*(x, \xi)$ is defined (modulo some scaling)  
as its symplectic Fourier transform, see e.g. \cite[Prologue]{Folland_1989} 
for the definition of the simplectic Fourier transform. 
One useful feature of the transformation $\op^{\rm w}(a) \mapsto\op^{\rm w}(a^*)$ 
is that it preserves singular values.  
Furthermore, the utility of the dual symbol is especially evident when the original symbol $a$ 
is discontinuous but decays rapidly at infinity, so that $a^*$ is smooth, which 
puts at our disposal 
a vast arsenal of standards methods. This observation 
plays a key role in our analysis of the operator \eqref{eq:aom}.

Another critical ingredient of our method is a new bound for 
singular values of pseudodifferential operators 
on $\plainL2(\R^d)$ 
with smooth symbols. Very loosely speaking 
we show that under some decay conditions on the symbol $a = a(\btau), \btau = (x, \xi),$ 
the spectral counting function 
of the compact operator 
$A = \op^{\rm w}(a)$ 
is controlled by the phase space volume of the ``classically allowed" region, 
see Theorem \ref{thm:limsup}.
%
%
For operators whose symbols 
have product structure, i.e. $a(x, \xi) = f(x) g(\xi)$, such estimates 
were first derived by Cwikel \cite{Cwikel1977}, and then complemented by a result of Simon, 
see \cite[Theorem 4.6]{Simon2005}. 
We stress that we 
do not assume any separation of variables in the symbol. The proofs rely on the 
bounds in the Schatten-von Neumann classes, obtained by the second author 
in \cite{Sobolev2014}, and on the ideas from \cite[Ch. 4]{Simon2005}. 
 
The final ingredient is the Birman-Schwinger principle 
which links spectral counting function of the operator 
$\SA_\Om$ with that of a suitable Schr\"odinger operator on $\plainL2(\R)$. 
Schr\"odinger operators 
have been very well studied in the literature, with a large number 
of results on their 
discrete spectrum. We are relying on the asymptotic formula found in \cite{Sob_Sol_2002} 
which allows us to obtain \eqref{eq:main}.

\subsection{Plan of the paper} 
 Sect. \ref{sect:prelim} contains some basic information about pseudodifferential operators and 
 introduces the notion of a dual symbol. 
 Here we also calculate the dual symbol for the operator \eqref{eq:aom}. 
 In Sect. \ref{sect:compact} we collect some known facts 
on classes of compact operators with a specific decay rate   
of their singular numbers $s_k, k=1, 2, \dots$. 
This includes the class with the rate 
$s_k \sim k^{-1} \log k$, as in \eqref{eq:main}. Sect. \ref{sect:bounds} is of 
central importance: here we derive phase space-type bounds 
for pseudodifferential operators with smooth symbols, see Theorem \ref{thm:limsup}.
Then in Sect. \ref{sect:examples} we give two examples 
relevant to our study of operators with discontinuous symbols. One example in particular, 
provides a multi-dimensional generalization of 
the bound \eqref{eq:34} by applying Theorem \ref{thm:limsup} to the smooth dual symbol of the 
indicator $\id_\Om$ of a domain $\Om\subset\R^{2d}$, see Proposition \ref{prop:curved}. 
The second example prepares the ground for the proof of Theorem \ref{thm:main}.  
In Sect. \ref{sect:kntow} using the method of Sect. 
\ref{sect:bounds}, 
we show that changing the Weyl symbol $a\big((x+y)/2, \xi\big)$ to the 
Kohn-Nirenberg symbol $a(x, \xi)$ does not affect the spectral asymptotics. In Sect. \ref{sect:proofs} all the ingredients are put together to complete the proof of Theorem \ref{thm:main}, with the help of the Birman-Schwinger principle.
 
\subsection{Notational conventions} By 
 $\btau = (x, \xi)\in \R^{2d}$ we denote the pair of standard canonical $d$-dimensional 
 variables. 
 
\textit{Fourier transforms.} 
For a function $f(x, \xi)$ on $\plainL2(\R^{2d})$ introduce the notation
\begin{align*}
({\mathcal F}_1 f) (\eta, \xi) = &\ \, 
\frac{1}{(2\pi)^{\frac{d}{2}}}\int_{\R^d} e^{-i\eta\cdot x}\, f(x, \xi)\, dx,\\
 ({\mathcal F}_2 f) (x, y) = &\ \, 
\frac{1}{(2\pi)^{\frac{d}{2}}}\int_{\R^d} e^{-i y\cdot\xi}\, f(x, \xi)\, d\xi,
 \end{align*}
and  
\begin{align*}
(\mathcal F f)(\eta, y) 
:= ({\mathcal F}_1\,{\mathcal F}_2 f) (\eta, y) = \,  
\frac{1}{(2\pi)^d}
\iint\limits_{\R^{2d}} e^{-i y\cdot\xi - i \eta\cdot x}\, f(x, \xi)\, d\xi\, dx.
\end{align*}
Integrals without indication of the domain are assumed to be taken over the entire Euclidean 
space $\R^d$, or $\R^{2d}$ depending on the context. 

\textit{Derivatives and function spaces.} 
We use the generally accepted notation for partial derivatives: for  
$x = (x_1, x_2, \dots, x_d)\in \R^d$ and $m = (m_1, m_2, \dots, m_d)\in \mathbb N_0^{d}$, 
$\mathbb N_0 = \mathbb N \cup\{0\}$,  we write 
$\p_x^m f = \p_{x_1}^{m_1}\p_{x_2}^{m_2}\cdots \p_{x_d}^{m_d} f$, and 
\begin{align*}
|\nabla_x^k f | = \sum_{|m|=k} \, |\p_x^m f|,\ k = 0, 1, \dots. 
\end{align*}
The symbols $\plainW{l}{p}$, $l = 0, 1, \dots$, $p\ge 1$, stand for the standard  
Sobolev spaces, see e.g. \cite[Ch. 5]{Evans1998}.

\textit{Bounds.} 
For two non-negative numbers (or functions) 
$X$ and $Y$ depending on some parameters, 
we write $X\lesssim Y$ (or $Y\gtrsim X$) if $X\le C Y$ with 
some positive constant $C$ independent of those parameters. 
If $X\lesssim Y\lesssim X$, then we write $X\asymp Y$. 
 To avoid confusion we may comment on the nature of 
(implicit) constants in the bounds. 

\section{Pseudodifferential operators: preliminaries}\label{sect:prelim}

In this section we  consider pseudodifferential operators on 
$\plainL2(\R^d)$ for arbitrary dimension $d\ge 1$.

\subsection{Generalities}
We shall encounter several types of pseudodifferential operators. 

For a function (\textit{symbol}) $a = a(x, \xi)$, $x, \xi\in\R^d$ and 
arbitrary $t\in [0, 1]$, we define 
the $t$-pseudodifferential operator in the standard way:
\begin{align}\label{eq:opt}
(\op^{(t)}(a) u) (x) = \frac{1}{(2\pi)^d} \, \iint e^{i(x-y)\cdot\xi}
a((1-t)x + ty, \xi) u(y) \, dy d\xi,
\end{align} 
 where $u$ is in the Schwartz class $\CS(\R^d)$. We shall be  working mostly with $t=0$ and $t=1/2$, 
 in which case we use the standard notation 
 \begin{align*}
 \op^{\rm l}(a) := \op^{(0)}(a),\quad  \op^{\rm w}(a) := \op^{(1/2)}(a).
 \end{align*}
Recall that the symbol $a(x, \xi)$ of 
$\op^{\rm l}(a)$ is usually called \textit{the Kohn-Nirenberg symbol},  
and the symbol $a((x+y)/2, \xi)$ of $\op^{\rm w}(a)$ is called \textit{the Weyl symbol}.
We also need a more general type of a pseudodifferential operator. 
Let $p = p(x, y, \xi), x,y,\xi\in\R^d$, be a smooth function. 
Recall the standard notation for the pseudodifferential operator 
with \textit{amplitude} $p$:
\begin{align*}
(\op^{\rm a}(p) u)(x) = \frac{1}{(2\pi)^d} \, \iint e^{i(x-y)\cdot\xi}
p(x, y, \xi) u(y) \, dy d\xi.
\end{align*} 
Our assumptions on 
symbols $a$ and amplitudes $p$ will ensure that all the integrals above 
are convergent. In particular, we always assume that 
$a\in\plainL2(\R^{2d})$, in which case it is well known that 
the operators $\op^{(t)}(a)$ are Hilbert-Schmidt. We give the proof for completeness. 
Below the class of Hilbert-Schmidt operators is denoted by $\BS_2$. 

\begin{lem} \label{lem:hs}
If $a\in \plainL2(\R^{2d})$, then for all $t\in [0, 1]$ we have 
$\op^{(t)}(a)\in \BS_2$ and 
\begin{align}\label{eq:hs}
\|\op^{(t)}(a)\|_{\BS_2}^2 
= \frac{1}{(2\pi)^d}  \int_{\R^{2d}} |a(\btau)|^2 \, d\btau,\quad \btau = (x, \xi).
\end{align}
Let 
\begin{align}\label{eq:inta}
\SA_{\tiny{\rm int}}(s) = \int_0^s \op^{(t)}(a) \, dt,
\end{align}
with some $s>0$. 
Then $A_{\tiny{\rm int}}(s)\in \BS_2$ and 
\begin{align}\label{eq:int}
\|\SA_{\tiny{\rm int}}(s)\|_{\BS_2}^2 \le \frac{s^2}{(2\pi)^d}  \int_{\R^{2d}} |a(\btau)|^2 \, d\btau.
\end{align}
\end{lem}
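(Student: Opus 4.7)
The plan is to identify the integral kernel of $\op^{(t)}(a)$ as a partial Fourier transform of $a$, apply Plancherel after an area-preserving change of variables to obtain \eqref{eq:hs}, and then deduce \eqref{eq:int} from Cauchy--Schwarz in $t$.

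\emph{Step 1: the kernel.} For $a\in\CS(\R^{2d})$, rewriting the $\xi$-integral in \eqref{eq:opt} in terms of the partial Fourier transform ${\mathcal F}_2$ fixed in the notation section yields
\begin{align*}
(\op^{(t)}(a) u)(x) = \int K_t(x,y)\, u(y)\, dy,
\qquad
K_t(x,y) = (2\pi)^{-d/2}\,({\mathcal F}_2 a)\bigl((1-t)x+ty,\, y-x\bigr).
\end{align*}

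\emph{Step 2: Plancherel.} The change of variables $(x,y)\mapsto (z,w)$ given by $z=(1-t)x+ty$, $w=y-x$ is a shear of $\R^{2d}$; a direct block-determinant computation shows its Jacobian equals $1$ independently of $t$. Therefore
\begin{align*}
\|\op^{(t)}(a)\|_{\BS_2}^2
&= \iint |K_t(x,y)|^2\, dx\, dy \\
&= (2\pi)^{-d}\iint |({\mathcal F}_2 a)(z,w)|^2\, dz\, dw
= (2\pi)^{-d}\,\|a\|_{\plainL2}^2,
\end{align*}
by Plancherel applied in the $w$-variable, which is \eqref{eq:hs}. The map $a\mapsto (2\pi)^{d/2}\op^{(t)}(a)$ is thus an isometry of $\plainL2(\R^{2d})$ into $\BS_2$ on the dense subset $\CS(\R^{2d})$, extending uniquely to all of $\plainL2(\R^{2d})$; this gives the first claim of the lemma.

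\emph{Step 3: the integrated operator.} The family $t\mapsto \op^{(t)}(a)$ is continuous on $[0,1]$ in the $\BS_2$-norm: on $\CS$ this is visible directly from the kernel formula and $\plainL2$-continuity of translations applied to ${\mathcal F}_2 a$, and the general case follows from the isometry in Step~2. Hence the Bochner integral \eqref{eq:inta} is well defined in $\BS_2$ and its kernel equals $\tilde K(x,y)=\int_0^s K_t(x,y)\, dt$. Cauchy--Schwarz in $t$ gives $|\tilde K(x,y)|^2\le s\int_0^s |K_t(x,y)|^2\, dt$, and integrating over $(x,y)$ and invoking \eqref{eq:hs} yields \eqref{eq:int}.

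\emph{Main obstacle.} There is no real obstacle: the only mildly subtle point is justifying the kernel identity in Step~1 for $a\in\plainL2$ that need not be integrable in $\xi$, and this is handled entirely by the density/isometry argument of Step~2. The rest is Fubini, a linear change of variables and Cauchy--Schwarz.
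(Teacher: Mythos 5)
Your proof is correct and follows essentially the same route as the paper: identify the kernel as $(2\pi)^{-d/2}({\mathcal F}_2 a)((1-t)x+ty,\,y-x)$, apply the unimodular shear and Plancherel for \eqref{eq:hs}, and then obtain \eqref{eq:int} by Cauchy--Schwarz in $t$ (the paper applies Minkowski's inequality to the Bochner integral and then Cauchy--Schwarz on the resulting scalar integral, which is the same estimate). The extra care you take with density and $\BS_2$-continuity in $t$ is fine but not a point of divergence.
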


\begin{proof}
The kernel of $\SA:=\op^{(t)}(a)$ is 
\begin{align*}
\frac{1}{(2\pi)^{\frac{d}{2}}}
(\mathcal F_2 a)((1-t)x+ty, y-x).
\end{align*}
Therefore 
\begin{align*}
\|\SA\|_{\BS_2}^2 = \frac{1}{(2\pi)^d} 
\iint |(\mathcal F_2 a)((1-t)x+ty, y-x)|^2 \, dx dy.
\end{align*}
Changing the variables to 
\begin{align*}
(1-t)x+ty = w,\quad y-x = z, 
\end{align*}
we obtain 
\begin{align*}
\|\SA\|_{\BS_2}^2 = \frac{1}{(2\pi)^d} 
\iint |(\mathcal F_2 a)(w, z)|^2 \, dw dz. 
\end{align*}
By Plancherel's identity, the last double integral coincides with the norm  
$\|a\|_{\plainL2}^2$, as claimed.  

The bound \eqref{eq:int} immediately follows from the estimates
\begin{align*}
\|\SA_{\tiny{\rm int}}(s)\|_{\BS_2}^2 
\le \bigg[ \int_0^s\,  \|\op^{(t)}(a)\|_{\BS_2} \, dt\bigg]^2
\le s \int_0^s\,  \|\op^{(t)}(a)\|_{\BS_2}^2 \, dt,
\end{align*}
and from \eqref{eq:hs}.
\end{proof}

\subsection{Reduction to a "dual" symbol}  
Let $a\in\plainL2(\R^{2d})$ be an arbitrary complex-valued function. 
Instead of the operator $\op^{\rm w}(a)$ one can study 
a pseudodifferential operator with a \textit{dual symbol}, which is defined as 
\begin{align}\label{eq:dual}
a^*(x, \xi) = 2^d\, (\mathcal F a)(2\xi, -2x).
\end{align}
The next theorem links $\op^{\rm w}(a)$ and $\op^{\rm w}(a^*)$ with one another. 
 
\begin{thm}\label{thm:red}
Let $a\in\plainL2(\R^{2d})$, and let 
$U:\plainL2(\R^d)\mapsto \plainL2(\R^d)$ be the unitary operator 
of reflection, i.e. $(U f)(x) = f(-x)$. Then  
$\op^{\rm w}(a) U = \op^{\rm w}(a^*)$.
\end{thm}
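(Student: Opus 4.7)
The plan is a direct comparison of integral kernels: compute the kernel of $\op^{\rm w}(a)U$ and the kernel of $\op^{\rm w}(a^*)$, and show they coincide by means of Fourier inversion together with a linear change of variables.

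First I would recall that the kernel of $\op^{\rm w}(a)$ is
\begin{align*}
K(x,y) = \frac{1}{(2\pi)^d}\int e^{i(x-y)\cdot\xi}\, a\!\left(\frac{x+y}{2},\xi\right)\,d\xi
=\frac{1}{(2\pi)^{d/2}}\,(\mathcal F_2 a)\!\left(\frac{x+y}{2},\, y-x\right),
\end{align*}
where in the last step I performed the $\xi$-integration and matched it to the definition of $\mathcal F_2$ from the Notational Conventions. Since $(Uu)(y)=u(-y)$, the kernel of $\op^{\rm w}(a)U$ is $K(x,-y)$, and substituting $y\mapsto -y$ in the above formula yields
\begin{align*}
K(x,-y)=\frac{1}{(2\pi)^{d/2}}\,(\mathcal F_2 a)\!\left(\frac{x-y}{2},\,-(x+y)\right).
\end{align*}

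Next I would compute the kernel $K^*(x,y)$ of $\op^{\rm w}(a^*)$. Plugging $a^*(\tau,\xi)=2^d(\mathcal F a)(2\xi,-2\tau)$ into the Weyl formula gives
\begin{align*}
K^*(x,y)=\frac{2^d}{(2\pi)^d}\int e^{i(x-y)\cdot\xi}\,(\mathcal F a)\bigl(2\xi,-(x+y)\bigr)\,d\xi.
\end{align*}
The change of variable $\eta=2\xi$ eliminates the factor $2^d$ and rewrites the exponent as $e^{i\eta\cdot(x-y)/2}$. Recognising the remaining $\eta$-integral as the inverse $\mathcal F_1$ applied to $(\mathcal F_1\mathcal F_2 a)(\cdot,-(x+y))$, Fourier inversion in the first variable collapses it to $(\mathcal F_2 a)\bigl(\tfrac{x-y}{2},-(x+y)\bigr)$ up to the expected factor $(2\pi)^{d/2}$. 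Comparing the result with the expression for $K(x,-y)$ above shows $K^*(x,y)=K(x,-y)$, which is exactly the identity $\op^{\rm w}(a^*)=\op^{\rm w}(a)\,U$.

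To justify all the manipulations I would first argue for Schwartz symbols $a\in\mathcal S(\R^{2d})$, where every integral converges absolutely and Fourier inversion is classical. The general case $a\in\plainL2(\R^{2d})$ then follows by density: by Lemma \ref{lem:hs} the maps $a\mapsto\op^{\rm w}(a)$ and $a\mapsto\op^{\rm w}(a^*)$ are continuous from $\plainL2(\R^{2d})$ into $\BS_2$, and the map $a\mapsto a^*$ is bounded on $\plainL2(\R^{2d})$ because it is (up to the factor $2^d$ and the linear rescaling $(x,\xi)\mapsto(2\xi,-2x)$) a Fourier transform.

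The main obstacle is purely bookkeeping: keeping track of the $(2\pi)^{d/2}$ factors and the sign conventions in $\mathcal F_1,\mathcal F_2$, so that the Jacobian $2^{-d}$ from $\eta=2\xi$ exactly cancels the prefactor $2^d$ in the definition \eqref{eq:dual} of $a^*$. No deeper idea is needed beyond this, since the statement is really the assertion that the dual symbol corresponds, up to the reflection $U$, to the symplectic Fourier transform of the Weyl symbol.
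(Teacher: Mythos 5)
Your proposal is correct and follows essentially the same route as the paper: both arguments reduce to Schwartz symbols via Lemma \ref{lem:hs}, express the Weyl kernel through $\mathcal F_2 a$, and use Fourier inversion in the first variable together with the dilation $\eta=2\xi$ to match the kernel of $\op^{\rm w}(a)U$ with that of $\op^{\rm w}(a^*)$. The only (immaterial) difference is that you compute the two kernels separately and compare them, whereas the paper transforms the kernel of $\op^{\rm w}(a)$ forward until the identity is read off; your density argument for general $a\in\plainL2(\R^{2d})$ is also sound.
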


\begin{proof} Due to Lemma \ref{lem:hs} we may assume that 
$a\in \CS(\R^{2d})$. Introduce the new variables
\begin{align*}
z = \frac{x+y}{2},\quad v = \frac{x-y}{2}. 
\end{align*}
Thus, rewriting $\SA = \op^{\rm w}(a)$ as an integral operator we get 
\begin{align*}
(\SA f)(x) = &\ \int \CA(x, y)\, f(y)\, dy,\quad \CA(x, y) = 
\frac{1}{(2\pi)^d} \, \int \, e^{2 iv\cdot\xi} \, a( z, \xi)\, d\xi,
\end{align*}
Rewrite the kernel: 
\begin{align*}
\CA(x, y) = &\ \frac{1}{(2\pi)^{\frac{d}{2}}} (\mathcal F_2 a) (z, -2 v)\\
= &\ \frac{1}{(2\pi)^d} 
\int e^{i\eta\cdot z}(\mathcal F_1 \mathcal F_2 a) (\eta, - 2 v) \, 
d\eta.
\end{align*}
Therefore, 
\begin{align*}
\CA(x, y) = &\ 
\frac{1}{(2\pi)^d} 
\int e^{i(x+y)\cdot\frac{\eta}{2}}(\mathcal F a) 
(\eta, -(x-y)) \, d\eta\\
=  &\ \frac{2^d}{(2\pi)^d} 
\int e^{i(x+y)\cdot\eta}(\mathcal F a) 
( 2 \eta, - (x-y)) \, d\eta.
\end{align*} 
Consequently,
\begin{align*}
(\SA U\,f)(x) =  \frac{2^d}{(2\pi)^d} 
\iint e^{i(x-y)\cdot\eta}(\mathcal F a) 
( 2 \eta, - (x+y))  f(y)   \, dy  d\eta
= \big(\op^{\rm w}(a^*) f\big)(x),
\end{align*}
as claimed. 
\end{proof}

%
Note that the 
dual symbol can be rewritten as 
$a^*(\btau) = 2^d \big({\mathcal F}_{\tiny{\rm symp}}a\big)(2\btau), \btau = (x, \xi)$, 
with ${\mathcal F}_{\tiny{\rm symp}}$ being the so-called \textit{symplectic Fourier transform}, 
see e.g. \cite[Prologue]{Folland_1989}. We do not use this fact in what follows.
%

\begin{rem}\label{rem:dual} 
Theorem \ref{thm:red} can be slighly generalized in the following way. 
Let $a$ be as in the theorem, and let $g = g(x)$ be a bounded function. Denote 
\begin{align*}
p(x, y, \xi) = a\bigg(\frac{x+y}{2}, \xi\bigg)\, g(x-y).
\end{align*}
Then $\op^{\rm a}(p)U = \op^{\rm w}(b)$ with
\begin{align*}
b(x, \xi) = 2^d \big({\mathcal F} a\big) (2\xi, -2x) \, g(-2x). 
\end{align*} 
The proof requires only minor modifications. 

\end{rem}

Theorem \ref{thm:red} will be our main tool when proving Theorem \ref{thm:main}. 
It will allow us to replace the discontinuous symbol $\phi_\Om = \id_\Om \phi$ 
by its smooth dual symbol.  
In the next proposition we calculate this dual symbol for the case of the quarter-plane 
$\Om = \Om_{\pi/2}$. 
Below we denote by $\z\in\plainC\infty(\R)$ an arbitrary function such that $\z(x) = 1$ 
for $|x|\ge 2$ and $\z(x) = 0$ for $|x|<1$.

\begin{thm} \label{thm:ftrans}
Let $\Om = \Om_{\pi/2}$. The symbol $\phi_\Om^*(x, \xi)$ dual to $\phi_\Om(x, \xi)$ 
satisfies $\phi_\Om^*\in \plainC\infty(\R)$ and 
the following bounds hold:
\begin{align*}
|\p_x^m \p_\xi^n \, \phi_\Om^*(x, \xi)|\lesssim \lu x\ru ^{-1-m}\, \lu \xi\ru^{-1-n}. 
\end{align*}
Furthermore,
\begin{align}\label{eq:b0b1}
\phi_\Om^*(x, \xi) = b_0(x, \xi) + b_1(x, \xi),
\end{align}
where
\begin{align}
 b_0(x, \xi) = &\ \frac{1}{4\pi} \phi(0, 0)\,
\frac{\z(x)}{x}\, \frac{\z(\xi)}{\xi}, \label{eq:b0}\\
|\p_x^m \p_\xi^n \, b_1(x, \xi)|\lesssim &\ \lu x\ru ^{-1-m}\, \lu \xi\ru^{-2-n}
+ \lu x\ru ^{-2-m}\, \lu \xi\ru^{-1-n}.\label{eq:b1diff}
\end{align} 
\end{thm}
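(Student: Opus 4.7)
Unwinding the definition \eqref{eq:dual} with $\Om_{\pi/2}=\{(x,\xi):x>0,\xi>0\}$ gives
\begin{align*}
\phi_\Om^*(x,\xi) = \frac{1}{\pi}\iint_{\Om_{\pi/2}} e^{2ix\xi' - 2i\xi x'}\phi(x',\xi')\,dx'\,d\xi'.
\end{align*}
Smoothness is immediate: differentiation under the integral sign brings down bounded factors $(2i\xi')^m(-2ix')^n$ that remain absolutely integrable against the compactly supported $\phi$. Setting $\psi_{m,n}(x',\xi'):=(\xi')^m(x')^n\phi(x',\xi')$, one gets
\begin{align*}
\p_x^m\p_\xi^n\phi_\Om^*(x,\xi) = \frac{(2i)^m(-2i)^n}{\pi}\iint_{\Om_{\pi/2}}e^{2ix\xi' - 2i\xi x'}\psi_{m,n}(x',\xi')\,dx'\,d\xi',
\end{align*}
and crucially $\psi_{m,n}$ vanishes to order $n$ at $x'=0$ and to order $m$ at $\xi'=0$.

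The decay at infinity and the leading term are then extracted by iterated integration by parts. First, I would integrate by parts $(n+1)$ times in $x'$ against $e^{-2i\xi x'}$: the vanishing of $\psi_{m,n}$ at $x'=0$ annihilates the first $n$ boundary terms, and the $(n+1)$-st produces a boundary contribution proportional to $\p_{x'}^n\psi_{m,n}(0,\xi') = n!\,(\xi')^m\phi(0,\xi')$, with overall factor $\xi^{-(n+1)}$, plus a volume remainder. A subsequent $(m+1)$ IBPs in $\xi'$ on the boundary expression, again using vanishing to order $m$ at $\xi'=0$, isolate a principal term proportional to $m!\,\phi(0,0)\,x^{-(m+1)}$. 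A direct computation of prefactors confirms that this leading piece coincides with $\p_x^m\p_\xi^n[\phi(0,0)/(4\pi x\xi)]$ via $\p_x^m x^{-1}=(-1)^m m!\,x^{-m-1}$, and therefore matches $\p_x^m\p_\xi^n b_0(x,\xi)$ on $\{|x|,|\xi|\ge 2\}$. The residuals split into two families: the volume remainder from the $(n+1)$ IBPs in $x'$, after one further IBP in $\xi'$, contributes $\lu x\ru^{-2-m}\lu\xi\ru^{-1-n}$, while the integral remainder from the $(m+1)$ IBPs in $\xi'$ contributes $\lu x\ru^{-1-m}\lu\xi\ru^{-2-n}$. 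Together these account both for the top-level bound on $\p_x^m\p_\xi^n\phi_\Om^*$ and for the sharper estimate \eqref{eq:b1diff} on the error.

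Near the origin, the required bounds follow from smoothness: $\phi_\Om^*$ and all its derivatives are bounded on compact sets, while $b_0\equiv 0$ on $\{|x|<1\}\cup\{|\xi|<1\}$ by the choice of $\z$, so $b_1=\phi_\Om^*$ is trivially bounded there. The principal obstacle is purely bookkeeping: organising the repeated IBPs so that exactly the term $(-1)^{m+n}m!n!\phi(0,0)/(4\pi x^{m+1}\xi^{n+1})$ is isolated with the right prefactor and matched to $\p_x^m\p_\xi^n b_0$, with the remainders cleanly grouped into the two families above. A minor additional subtlety is that the cutoff $\z$ has no counterpart in the IBP expansion, so one must separately verify that the discrepancy between $b_0$ and its asymptotic form $\phi(0,0)/(4\pi x\xi)$ in the transition region $1\le |x|\le 2$ or $1\le|\xi|\le 2$ is absorbed into the $b_1$ estimate by smoothness.
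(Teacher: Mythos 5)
Your proposal follows essentially the same route as the paper: write $\phi_\Om^*$ as the double oscillatory integral of $\phi$ over the quarter-plane, differentiate under the integral sign, and perform iterated integrations by parts in each integration variable, using the vanishing of $(\xi')^m(x')^n\phi$ at the corner to kill the lower boundary terms and isolate the principal contribution $(-1)^{m+n}m!\,n!\,\phi(0,0)/(4\pi x^{m+1}\xi^{n+1})$, with one extra integration by parts supplying the additional decay of the remainders; the treatment of the cutoff $\z$ is also as in the paper. The one place where your write-up is not quite right is the region where exactly one of $|x|,|\xi|$ is small: you claim $b_1=\phi_\Om^*$ is ``trivially bounded'' on $\{|x|<1\}\cup\{|\xi|<1\}$, but these strips are unbounded and the bounds \eqref{eq:b1diff} (and the top-level bound) still require decay of order $\lu\xi\ru^{-1-n}$ (respectively $\lu x\ru^{-1-m}$) there; this is repaired immediately by running your integration by parts in one variable only on each strip, which is exactly what the paper does, so the gap is cosmetic rather than structural.
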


\begin{proof} For brevity denote $b = \phi_\Om^*$. 
By definition, 
\begin{align*}
b(x, \xi) = \frac{1}{\pi}\int_0^\infty\int_0^\infty e^{-2i \xi y}   
e^{2i x\eta}  \phi(y, \eta) \, dy d\eta,
\end{align*}
and hence
\begin{align*}
\p_x^m \p_\xi^n \, b(x, \xi) = \frac{1}{\pi}\int_0^\infty\int_0^\infty 
e^{-2i \xi y}   e^{2i x\eta}  (2i\eta)^m (-2iy)^n   \phi(y, \eta) \, dy d\eta.
\end{align*}
We begin with examining the case where $|x|\ge 1$ and $|\xi|\ge 1$.  

Performing several tedious (but elementary) integrations by parts we obtain that 
\begin{align*}
\p_x^m \p_\xi^n \, b(x, \xi)   
= &\ \frac{n! m!}{4\pi} \frac{(-1)^{n+m}}{\xi^{n+1}x^{m+1}}\, \phi(0, 0) \\
&\quad  -\frac{ i\, n!}{2\pi} \frac{(-1)^n}{ \xi^{n+1}} \bigg(\frac{i}{2x} \bigg)^{m+1}
\int_0^\infty    
e^{2i x\eta} \, \p_{\eta}^{m+1}\big((2i\eta)^m \phi(0, \eta)\big) \, d\eta\\
&\ + \frac{i m!}{2\pi} \frac{(-1)^m}{x^{m+1}} 
\bigg(-\frac{i}{2\xi}\bigg)^{n+1}   \int_0^\infty \, e^{-2i\xi y}  \p_y^{n+1} 
 \big((-2iy)^n \phi(y, 0)\big)\, dy 
\\
  + \frac{1}{\pi} 
\bigg(\frac{i}{2x}\bigg)^{m+1} &
\bigg(-\frac{i}{2\xi}\bigg)^{n+1} 
\int_0^\infty\int_0^\infty e^{2i x\eta} 
e^{-2i \xi y}   
\p_y^{n+1} \p_\eta^{m+1} 
 \big((2i\eta)^m  (-2iy)^n \phi(y, \eta)\big) \, dy d\eta.
 \end{align*}
Without writing it out explicitly, observe that another integration by parts ensures that 
each of the last three integrals exhibits an extra decay in $x$ or $\xi$: 
precisely, the first integral 
is bounded by $|x|^{-1}$, the second and the third one -- by $|\xi|^{-1}$. Therefore, 
\begin{align}\label{eq:step1}
b(x, \xi) = \frac{\phi(0,0)}{4\pi x\xi} + \tilde b_1(x, \xi),
\end{align}
 where 
 \begin{align*}
| \p_x^m \p_\xi^n \tilde b_1(x, \xi)|
\lesssim |x|^{-m-1} |\xi|^{-n-2} + |x|^{-m-2} |\xi|^{-n-1},
\quad |x|\ge 1, |\xi|\ge 1.
 \end{align*}
 Since $1-\z(x)$ is compactly supported, 
 the functions $x^{-1}$ and $\xi^{-1}$ in \eqref{eq:step1} can be replaced 
with $\z(x) x^{-1}$ and $\z(\xi)\xi^{-1}$ respectively and the difference can be  
incorporated in $\tilde b_1(x, \xi)$ to form a new symbol that we call $b_1(x, \xi)$. 
This gives the representation \eqref{eq:b0b1}. 

In the case where $|x|<1$ and  $|\xi| >1$ 
the derivatives of $b$ are estimated by integrating by parts in the variable $\xi$ 
only. Similarly, for the case $|x|>1$ and  $|\xi| <1$. If $|x|<1$ and $|\xi|<1$, 
then the derivatives are trivially bounded. Thus the proof of \eqref{eq:b0b1} is complete.
\end{proof}

\section{Compact operators}\label{sect:compact}

We need some information about classes of compact operators on a Hilbert space. 
For a compact self-adjoint operator $A$ we denote by $\l^{(+)}_k(A)$ and 
$-\l^{(-)}_k(A)$, $k = 1, 2, \dots$,  its positive and negative eigenvalues respectively, 
enumerated in non-increasing order. For an arbitrary compact $A$ we define 
its singular values (s-values) as $s_k(A) = \sqrt{\l^{(+)}_k(A^*A)} =  \sqrt{\l^{(+)}_k(AA^*)}$.  
Clearly, for any unitary $U$ the operators $A$ and $AU$ have the same singular values.
Recall the standard notation for the counting functions:
\begin{align*}
n(s; A) = \#\{k=1, 2, \dots: s_k(A)>s\},\quad n_\pm(\l; A) = 
\#\{k=1, 2, \dots: \l^{(\pm)}_k(A)>\l\}, 
\end{align*}
where $ s>0, \l>0$. 
For a pair $A_1, A_2$ of compact operators, the introduced quantities satisfy the following inequalities for all $k, m = 1, 2, \dots$, and all $s_1, s_2>0$, known as the 
Ky Fan inequalities:
\begin{align}
s_{k+m-1}(A_1+A_2)\le &\ s_k(A)+s_m(A),\label{eq:qifan}\\ 
n(s_1 + s_2; A_1+A_2) \le &\ n(s_1; A_1) + n(s_2; A_2).\label{eq:countqifan}
\end{align}
If $A_1, A_2$ are self-adjoint, then
\begin{align}
\l^{(\pm)}_{k+m-1}(A_1+A_2)\le &\ \l^{(\pm)}_k(A)+\l^{(\pm)}_m(A),\notag\\ 
n_\pm(s_1 + s_2; A_1+A_2) \le &\ n_{\pm}(s_1; A_1) + n_\pm(s_2; A_2).\label{eq:saqifan}
\end{align}
The notation $\BS_p, 0<p<\infty $, is standard for the Schatten-von Neumann class of 
compact operators $A$ on a Hilbert space, for whose singular values $s_k(A), k = 1, 2, \dots$, 
the  functional
\begin{align*}
\|A\|_{\BS_p}: = \bigg[\sum_{k=1}^\infty\, s_k^p(A) \bigg]^{\frac{1}{p}}
\end{align*}
is finite. In particular, $\BS_2$ is the usual Hilbert-Schmidt class, that has been mentioned previously. Observe that 
\begin{align}\label{eq:ind}
s_k(A) \le k^{-1/p} \, \|A\|_{\BS_p}.
\end{align}
The class of operators that satisfy the individual bounds $s_k(T) = O(k^{-1/p})$ 
with some $p>0$, is denoted by $\BS_{p, \infty}\supset \BS_p$. The functional 
\begin{align*}
\|A\|_{\BS_{p, \infty}} = \sup_{k}\, s_k(A) k^{\frac{1}{p}}
\end{align*} 
defines a quasi-norm on $\BS_{p, \infty}$. We refer for details on these classes 
to \cite[Chapter 11.6]{BS} and \cite[Ch. 4]{Simon2005}. 

In this paper, however, we work with more general classes of compact operators that 
were studied in \cite{Weidl_1993}. 
Let, for $t\ge 0$,  
\begin{align*}
f(t) = f_\s(t) = \frac{t}{\log^\s (t+e^{\s+1})},\ \quad \s \ge 0.
\end{align*} 
The function $f_\s$ is non-negative, and an elementary calculation shows that $f_\s''(t) <0$ 
for all $t >0$, so that it is (strictly) concave. 
Therefore, it is subadditive:
\begin{align*}
f_\s(t_1+t_2)\le f_\s(t_1) + f_\s(t_2),\quad t_1, t_2\ge 0.
\end{align*}
Define $\GS_{p, \s}, p >0, \s \ge 0$, as the class of compact operators $A$ 
satisfying the bound $s_k (A) = O(f(k)^{-1/p})$. Let 
\begin{align}\label{eq:metric}
\|A\|_{\GS_{p, \s}} = \sup_{k\in\mathbb N}\, s_k(A) \, f(k)^{\frac{1}{p}} = 
\big(\sup_{s>0}\, s^p \, 
f\big( n(s; A)\big)\big)^\frac{1}{p}.
\end{align}
Note that in \cite{Weidl_1993} more general concave functions $f$ are allowed. 
Along with \eqref{eq:metric} introduce the asymptotic quantities
\begin{align}\label{eq:limsupinf}
\begin{cases}
\SfG_{p, \s}(A) = 
\limsup\limits_{k\to\infty} s_k(A) \big(f_\s(k)\big)^{\frac{1}{p}} = 
\limsup\limits_{s\to 0}\,s\, \big(f_\s\big(n(s; A) \big)\big)^{\frac{1}{p}},\\[0.3cm]
\sg_{p,\s}(A) =  
\liminf\limits_{k\to\infty}s_k(A) \big(f_\s(k)\big)^{\frac{1}{p}} = 
\liminf\limits_{s\to 0}\,s\, \big(f_\s\big(n(s; A) \big)\big)^{\frac{1}{p}}.
\end{cases}
\end{align}
If $n(s; A)$ is replaced by $n_{\pm}(\l; A)$, then the notation for these functionals 
is $\SfG^{(\pm)}_{p, \s}$ and  $\sg^{(\pm)}_{p, \s}$. 
Note that the values of these asymptotic functionals 
do not change if the function $f_\s$ is replaced 
by $t\log^{-\s}(t+a)$ with an arbitrary $a\ge 0$.

The next proposition was proved in \cite{Weidl_1993}, but we provide the proof 
for the sake of completeness.

\begin{prop}\label{prop:triangle}
The functional \eqref{eq:metric} defines a metric on $\GS_{p, \s}$. More prescisely, 
for any $A_1, A_2\in\GS_{p, \s}$ the inequality holds:
\begin{align}\label{eq:tri}
\|A_1 + A_2\|_{\GS_{p, \s}}^{\frac{p}{p+1}}\le 
\|A_1\|_{\GS_{p, \s}}^{\frac{p}{p+1}} + \|A_2\|_{\GS_{p, \s}}^{\frac{p}{p+1}}.
\end{align}
Furthermore,
\begin{align}\label{eq:supinf}
\begin{cases}
\big(\SfG_{p, \s}(A_1+A_2)\big)^{\frac{p}{p+1}}\le  
\big(\SfG_{p, \s}(A_1)\big)^{\frac{p}{p+1}} 
+ \big(\SfG_{p, \s}(A_2)\big)^{\frac{p}{p+1}},\\[0.3cm]
\big(\sg_{p, \s}(A_1+A_2)\big)^{\frac{p}{p+1}}\le  \big(\sg_{p, \s}(A_1)\big)^{\frac{p}{p+1}} 
+  \big(\SfG_{p, \s}(A_2)\big)^{\frac{p}{p+1}},
\end{cases}
\end{align}
and the same inequalities hold for $\SfG^{(\pm)}_{p, \s}$ and $\sg^{(\pm)}_{p, \s}$.
\end{prop}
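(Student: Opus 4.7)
The plan is to extract a one-parameter pointwise inequality from the Ky Fan counting bound \eqref{eq:countqifan} and the subadditivity of $f_\s$ noted just above, and then optimise over that parameter. First I would combine monotonicity and subadditivity of $f_\s$ with \eqref{eq:countqifan} to obtain, for arbitrary $s_1,s_2>0$,
\[
f_\s\bigl(n(s_1+s_2;A_1+A_2)\bigr)\le f_\s\bigl(n(s_1;A_1)\bigr)+f_\s\bigl(n(s_2;A_2)\bigr).
\]
Writing $N_i=\|A_i\|_{\GS_{p,\s}}$, the definition \eqref{eq:metric} yields $f_\s(n(s_i;A_i))\le N_i^p s_i^{-p}$. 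Parameterising $s_i=t_i s$ with $t_1+t_2=1$, $t_i>0$, I obtain the key pointwise bound
\[
s^p f_\s\bigl(n(s;A_1+A_2)\bigr)\le N_1^p t_1^{-p}+N_2^p t_2^{-p},\qquad s>0,\ t_1+t_2=1.
\]

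Next I would minimise the right-hand side over $t_1\in(0,1)$. Setting the derivative to zero gives $t_i=N_i^{p/(p+1)}/\bigl(N_1^{p/(p+1)}+N_2^{p/(p+1)}\bigr)$ with minimum value $\bigl(N_1^{p/(p+1)}+N_2^{p/(p+1)}\bigr)^{p+1}$; this is essentially Young's inequality. Taking $\sup_{s>0}$ and extracting the $p$-th root then gives \eqref{eq:tri} directly. The remaining metric axioms (positivity, $\|A\|=0\Rightarrow A=0$, symmetry) are immediate from the definition.

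For the asymptotic inequalities \eqref{eq:supinf}, I would reuse the pointwise bound, now interpreting both sides as functions of $s\to 0$. The key observation is that the optimal $t_i$ are independent of $s$, so $s\to 0$ forces both $s_i=t_i s\to 0$. The first line of \eqref{eq:supinf} then follows by taking $\limsup_{s\to 0}$ on both sides, substituting $\SfG_{p,\s}(A_i)$ in place of $N_i$, and optimising in $t$ exactly as above. For the mixed $\sg$--$\SfG$ bound, I would select a sequence $s_n\to 0$ realising $\sg_{p,\s}(A_1)$, set $s_1=s_n$ and $s_2=(t_2/t_1)s_n$ (so that $s=s_n/t_1\to 0$ and $s_2\to 0$), and apply the pointwise bound along this sequence. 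Using $\liminf(X_n+Y_n)\le\lim X_n+\limsup Y_n$ together with $\liminf_{s\to 0}g(s)\le\liminf_n g(s_n/t_1)$, I would deduce
\[
\sg_{p,\s}(A_1+A_2)^p\le \sg_{p,\s}(A_1)^p\,t_1^{-p}+\SfG_{p,\s}(A_2)^p\,t_2^{-p},
\]
and then minimise in $t$. The analogues for $\SfG^{(\pm)}_{p,\s}$ and $\sg^{(\pm)}_{p,\s}$ follow verbatim upon replacing \eqref{eq:countqifan} by the self-adjoint analogue \eqref{eq:saqifan}.

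The principal delicate point is the optimisation over $(t_1,t_2)$; the exponent $p/(p+1)$ in \eqref{eq:tri} arises entirely from this step, which is also why one obtains a quasi-metric rather than a genuine norm. The $\liminf$--$\limsup$ interplay in the last step is routine but must be carried out carefully, since $s_1$ is pinned to the $\sg(A_1)$-realising sequence while the behaviour of $s_2$ is only controlled in $\limsup$.
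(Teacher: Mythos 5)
Your proposal is correct and follows essentially the same route as the paper: the Ky Fan counting inequality \eqref{eq:countqifan} combined with monotonicity and subadditivity of $f_\s$, the splitting of the threshold as $\varepsilon s$ and $(1-\varepsilon)s$ (your $t_1s,t_2s$), and minimisation over the splitting parameter, which produces the exponent $p/(p+1)$. Your treatment of the mixed $\sg$--$\SfG$ inequality via $\liminf(X+Y)\le\liminf X+\limsup Y$ is just a more explicit version of the step the paper dispatches with ``in the same way, taking $\limsup$ or $\liminf$''.
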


\begin{proof}
Due to \eqref{eq:countqifan}, 
by subadditivity and monotonicity of 
$f$ we obtain for any $s>0$,  $\varepsilon\in (0, 1)$ that 
\begin{align*}
f\big(n(s; A_1+A_2)\big)
\le f\big(n(\varepsilon s; A_1)\big) + f\big(n((1-\varepsilon) s; A_2)\big),
\end{align*} 
and hence
\begin{align*}
s^p\, f\big(n(s; A_1+A_2)\big)
\le \frac{(\varepsilon s)^p}{\varepsilon^p}
\, f\big(n(\varepsilon s; A_1)\big) 
+ \frac{((1-\varepsilon) s)^p}{(1-\varepsilon)^p}
\, f\big(n((1-\varepsilon) s; A_2)\big),
\end{align*}
Taking the supremum over $s>0$, we get 
\begin{align*}
\|A_1+A_2\|_{\GS_{p, \s}}^p\le  \frac{1}{\varepsilon^p}
\, \|A_1\|_{\GS_{p, \s}}^p 
+ \frac{1}{(1-\varepsilon)^p}\, \|A_2\|_{\GS_{p, \s}}^p,
\end{align*}
Minimizing in $\varepsilon$ we obtain \eqref{eq:tri}. 
In the same way, taking $\limsup$ or $\liminf$  we arrive at \eqref{eq:supinf}. 

The inequalities for $\SfG^{(\pm)}_{p, \s}$ and $\sg^{(\pm)}_{p, \s}$ 
are derived from \eqref{eq:saqifan} 
in the same way. 
\end{proof}

The following corollary will be of central importance for us. 

\begin{cor}\label{cor:pert}
If $\SfG_{p, \s}(A_1-A_2) = 0$, then $\SfG_{p, \s}(A_1) = \SfG_{p, \s}(A_2)$,  
$\sg_{p, \s}(A_1) = \sg_{p, \s}(A_2)$, 
and the same equalities hold for $\SfG^{(\pm)}_{p, \s}$ and $\sg^{(\pm)}_{p, \s}$.
\end{cor}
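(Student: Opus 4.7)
The proof is a direct algebraic consequence of the quasi-triangle inequalities established in Proposition \ref{prop:triangle}. My plan is as follows.

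First, I would write $A_1 = A_2 + (A_1 - A_2)$ and insert this into the first line of \eqref{eq:supinf}. Since by hypothesis $\SfG_{p,\s}(A_1-A_2) = 0$, the right-hand side collapses to $\SfG_{p,\s}(A_2)^{p/(p+1)}$, giving $\SfG_{p,\s}(A_1) \le \SfG_{p,\s}(A_2)$. Using that $s_k(A_1-A_2) = s_k(A_2-A_1)$, the hypothesis is symmetric in the two indices, so swapping the roles of $A_1$ and $A_2$ yields the reverse bound. This establishes the equality for $\SfG_{p,\s}$.

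Next, I would apply the second line of \eqref{eq:supinf} to $A_1 = A_2 + (A_1-A_2)$ to obtain $\sg_{p,\s}(A_1) \le \sg_{p,\s}(A_2)$, and the reverse inequality follows by swapping the roles. Here the key point — and essentially the only subtle aspect of the argument — is that the inequality for $\sg$ is asymmetric: the perturbation term appears as $\SfG_{p,\s}$, not $\sg_{p,\s}$. This asymmetry is precisely why the hypothesis is formulated in terms of $\SfG_{p,\s}(A_1-A_2)$: a weaker assumption on $\sg_{p,\s}(A_1-A_2)$ would not suffice to conclude equality of the $\sg$ quantities.

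Finally, for the $\pm$-indexed functionals, identical reasoning applies, since Proposition \ref{prop:triangle} provides the same inequalities for $\SfG^{(\pm)}_{p,\s}$ and $\sg^{(\pm)}_{p,\s}$. The only extra ingredient is the observation that $\SfG^{(\pm)}_{p,\s}(A_1-A_2) \le \SfG_{p,\s}(A_1-A_2) = 0$, which follows from the elementary bound $\l_k^{(\pm)}(B) \le s_k(B)$ valid for any compact self-adjoint $B$ (applied to $B = A_1 - A_2$). I do not anticipate any substantial obstacle; the entire content of the corollary lies in recognizing that the quasi-triangle inequalities \eqref{eq:supinf} of Proposition \ref{prop:triangle} are strong enough to behave like a genuine triangle inequality under perturbations whose $\SfG$-functional vanishes.
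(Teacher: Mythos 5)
Your argument is correct and is exactly the paper's (one-line) proof spelled out: the paper simply declares the corollary an immediate consequence of \eqref{eq:supinf}, and you supply the decomposition $A_1 = A_2 + (A_1-A_2)$, the role swap, and the correct observation that the asymmetric form of the $\sg_{p,\s}$ inequality is what forces the hypothesis to be on $\SfG_{p,\s}(A_1-A_2)$. Your extra remark that $\SfG^{(\pm)}_{p,\s}(A_1-A_2)\le\SfG_{p,\s}(A_1-A_2)$ via $\l_k^{(\pm)}(B)\le s_k(B)$ is a detail the paper leaves implicit, and it is right.
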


This is an immediate consequence of the bounds \eqref{eq:supinf}.
  
\section{Bounds for pseudo-differential operators}\label{sect:bounds}

In this section we derive phase space-type bounds for the singular values of 
the operator \eqref{eq:opt}.

\subsection{Estimates in terms of lattice norms} 
We rely on the estimates for the operator \eqref{eq:opt} 
in Schatten-von Neumann classes that were  
obtained in \cite{Sobolev2014}.  
Let $\CC_z\subset \R^m$ be a unit cube centred at $z\in\R^m$.  
For a function $h\in\plainL{r}_{\textup{\tiny loc}}(\R^m)$, $r \in (0, \infty),$ 
denote
\begin{equation}\label{eq:brackh}
\begin{cases}
\1 h\1_{r,\delta} =
\biggr[\sum_{n\in\Z^m}
\biggl(\int_{\CC_n} |h(x)|^r d x\biggr)^{\frac{\d}{r}}\biggr]^{\frac{1}{\d}},\ \
0<\d<\infty,\\[0.3cm]
\1 h\1_{r, \infty} =
\sup_{z\in\R^m}
\biggl(\int_{\CC_z} |h(x)|^r dx\biggr)^{\frac{1}{r}},\ \
\d = \infty.
\end{cases}
\end{equation}
These functionals are sometimes called \textit{lattice  
quasi-norms} (norms for $r, \d\ge 1$).  
If $\1 h\1_{r, \d}<\infty$ we say that  
$h\in \plainl{\d}(\plainL{r})(\R^m)$. 
For a symbol $a = a(x, \xi)$ denote
\begin{equation}\label{eq:F}
F_{n}(x, \xi; a)
= \sum_{s, k =0}^n 
|\nabla_{x}^k \nabla_{\xi}^s a(x, \xi)|,\quad n = 1, 2, \dots.
\end{equation}
The next proposition 
is a part of \cite[Theorem 2.6]{Sobolev2014}. 
It is stated in the form convenient for our purposes. 

\begin{prop}\label{prop:symbol} Let $q\in (0, 1]$, and let  
\begin{align}\label{eq:n}
n  = n(q) = [d q^{-1}] + 1. 
\end{align}
If $F_n(a)\in \plainl{q}(\plainL{1})(\R^{2d})$, then   
\begin{equation}\label{eq:symbol}
\|\op^{(t)} (a) \|_{\BS_q} 
\lesssim \1 F_{n}(a)\1_{1, q},
\end{equation}
with a constant independent of $t\in [0, 1]$. 
 \end{prop}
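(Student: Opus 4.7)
The plan is to reduce, via a smooth phase-space partition of unity, to a local Schatten--von Neumann bound and then sum using the quasi-subadditivity of $\|\cdot\|_{\BS_q}^q$ for $q \in (0, 1]$.

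First, I fix $\chi \in \co(\R^{2d})$ whose integer translates $\chi_z(\btau) = \chi(\btau - z)$, $z \in \Z^{2d}$, form a smooth partition of unity on $\R^{2d}$. Setting $a_z := a\chi_z$, the quasi-subadditivity of $\|\cdot\|_{\BS_q}^q$ gives
\begin{align*}
\|\op^{(t)}(a)\|_{\BS_q}^q \lesssim \sum_{z\in\Z^{2d}}\|\op^{(t)}(a_z)\|_{\BS_q}^q.
\end{align*}
The task reduces to a uniform local bound of the form
\begin{align*}
\|\op^{(t)}(a_z)\|_{\BS_q} \lesssim \int_{\CC_z'} F_n(a)(\btau)\, d\btau,
\end{align*}
with $\CC_z'$ a fixed dilate of $\CC_z$ and $n = [d/q]+1$. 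Since the family $\{\CC_z'\}$ has bounded overlap, taking $q$-th powers and summing in $z$ produces $\1 F_n(a)\1_{1,q}^q$ on the right-hand side, which is precisely the claimed bound.

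To obtain the local estimate I would start from the kernel representation
\begin{align*}
\CA_z(x, y) = \frac{1}{(2\pi)^{d/2}}(\mathcal F_2 a_z)\bigl((1-t)x + ty,\ y - x\bigr)
\end{align*}
and pass to a coherent state (Gabor) expansion subordinate to the integer lattice $\Z^{2d}$. In this basis $\op^{(t)}(a_z)$ is represented by an infinite matrix whose off-diagonal entries decay faster than $|z-z'|^{-d/q}$ thanks to the $n$ mixed derivatives of $a_z$ in $(x,\xi)$. A Schur-type argument combined with $q$-subadditivity reduces $\|\op^{(t)}(a_z)\|_{\BS_q}$ to an $\ell^q$-type sum over the diagonal, and the Gaussian profile of the coherent states converts this into the $L^1$-integral of $F_n(a)$ on the right. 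The value $n = [d/q]+1$ matches exactly the Sobolev embedding threshold on $\R^d$, and uniformity in $t$ is automatic because $t$ enters only through the volume-preserving affine change $(1-t)x + ty \mapsto z$.

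The main obstacle is the sharp derivative count $n = [d/q]+1$ in the local bound: naive interpolation between the Hilbert--Schmidt identity of Lemma \ref{lem:hs} (which costs no derivatives) and a Calder\'on--Vaillancourt-type operator norm bound (which costs $\sim d$ derivatives) produces too many derivatives. The saving comes from separating the $x$- and $\xi$-regularity and exploiting the tensor structure of the Gabor lattice cells, together with the rapid (Gaussian) decay of the Gabor reproducing kernel, which is what allows the right-hand side to involve only an $L^1$-norm of $F_n(a)$ rather than an $L^p$-norm with $p > 1$.
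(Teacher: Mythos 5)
You have the right architecture: localize with a smooth partition of unity, use the $q$-triangle inequality $\|\sum_z T_z\|_{\BS_q}^q\le\sum_z\|T_z\|_{\BS_q}^q$ (valid for $q\le 1$), and reduce to a local estimate $\|\op^{(t)}(a\chi_z)\|_{\BS_q}\lesssim\|F_n(a)\|_{\plainL1(\CC_z')}$ whose summation over $z$ yields $\1 F_n(a)\1_{1,q}$. That is exactly the Birman--Solomyak scheme behind \cite[Theorem 2.6]{Sobolev2014}, which the paper simply quotes without proof. But your argument for the local estimate --- the entire content of the proposition --- is not a proof. First, a ``Schur-type argument'' controls the operator norm of a matrix, not its $\BS_q$ quasi-norm for $q<1$; the tool that is actually available is the entrywise bound $\|M\|_{\BS_q}^q\le\sum_{j,k}|M_{jk}|^q$ (from $q$-subadditivity applied to rank-one pieces). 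With that tool, off-diagonal decay ``faster than $|z-z'|^{-d/q}$'' is not enough: the Gabor lattice lives in $\R^{2d}$, so a bound $|M_{zz'}|\lesssim\lu z-z'\ru^{-N}$ needs $Nq>2d$ to be $\ell^q$-summable. The count $n=[d/q]+1$ only suffices if one exploits the tensor structure quantitatively, obtaining separated decay $\lu z_x-z_x'\ru^{-n}\lu z_\xi-z_\xi'\ru^{-n}$ so that the double sum factors into two convergent $d$-dimensional sums with $nq>d$; you gesture at this but do not carry it out. Second, the step in which ``the Gaussian profile of the coherent states converts this into the $L^1$-integral of $F_n(a)$'' is precisely the estimate that has to be proved, and it is only asserted.

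For comparison, the standard proof of the local bound avoids frames altogether: expand the localized symbol $b=a\chi_z$ in a Fourier series on a slightly larger cube, $b(x,\xi)=\sum_{\mu,\nu\in\Z^d}\hat b_{\mu\nu}\,e^{i\mu\cdot x}e^{i\nu\cdot\xi}\psi(x)\psi(\xi)$. Each mode gives an operator unitarily equivalent (via modulations and translations, uniformly in $t$) to the fixed operator $\op^{(t)}(\psi\otimes\psi)$, which lies in every $\BS_q$; hence $\|\op^{(t)}(b)\|_{\BS_q}^q\lesssim\sum_{\mu,\nu}|\hat b_{\mu\nu}|^q$. Integration by parts gives $|\hat b_{\mu\nu}|\lesssim\lu\mu\ru^{-n}\lu\nu\ru^{-n}\|F_n(b)\|_{\plainL1}$, and $\sum_{\mu,\nu}\lu\mu\ru^{-nq}\lu\nu\ru^{-nq}<\infty$ precisely when $nq>d$, i.e.\ $n=[dq^{-1}]+1$. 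This makes transparent both the derivative count and why only the $\plainL1$-norm of $F_n$ appears. If you want to keep the Gabor route, you must supply the analogue of these two steps explicitly; as written, the proposal has a genuine gap at its core.
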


\subsection{Phase space bounds}\label{subsect:phasesp}
Our objective is to obtain bounds in classes $\GS_{p, \s}$ with suitable $p\in (0, 2)$ 
and $\sigma>0$. To this end instead of tiling $\R^{2d}$ with the 
cubes $\CC_\bn, \bn\in \Z^{2d}$, as in Proposition \ref{prop:symbol}, 
we consider a cover of $\Z^{2d}$ by cubes  
$\CC^{(2)}_{\bn} = (-1, 1)^{2d} + \bn$, $\bn\in \Z^{2d}$. 
Recall the notation $\btau = (x, \xi)$ and let 
$\z\in\plainC\infty_0(\CC^{(2)}_{\bold0})$ be a non-negative function such 
that the translations 
$\z_{\bk}(\btau) = \z(\btau-\bk),\, \bk\in \mathbb Z^{2d}$, 
form a partition of unity subordinate to the above cover:  
\begin{align*}
\sum_{\bk\in\Z^{2d}} \z_{\bk}(\btau) = 1,\ \quad \btau\in \R^{2d}.
\end{align*} 
Then the operator $\SA = \SA^{(t)} = \op^{(t)}(a)$ can be represented as  
$\SA = \sum_{\bk} \, \op^{(t)}(\z_\bk a)$. 
Since the parameter $t$ is fixed, we often omit it from the notation. 
For a subset $K\subset \Z^{2d}$, introduce a "trimmed" variant of the operator 
$\SA$:
\begin{align*}
{\sf A}_K = \SA_K^{(t)}:= \op^{(t)}(a_K),\quad a_K(\btau) = 
a(\btau) \sum_{\bk\in K}\, \z_{\bk}(\btau) \, .
%
%
\end{align*} 
In order to estimate the $\BS_q$-quasi-norm of ${\sf A}_K$ introduce the sequence  
\begin{align}\label{eq:vk}
v_{\bk} = \bigg[\int_{\CC^{(2)}_\bk}\, |F_n(\btau; a)|^2 \, d\btau\bigg]^{\frac{1}{2}},
\quad {\sf v} = \{v_\bk\}.
\end{align}
To avoid cumbersome notation, we do not indicate 
the dependence of the sequence $\sv$ on the symbol  $a$ 
and number $n$, but one should always keep this dependence in mind.  

The next fact follows directly from Proposition 
\ref{prop:symbol}.

\begin{prop}\label{prop:vk} Let $q\in (0, 1]$, and let 
the sequence ${\sf v}$ be as defined in \eqref{eq:vk} with the number $n$ 
defined in \eqref{eq:n}. 
 Suppose that ${\sf v}$ belongs to 
$\plainl{q}(K)$. Then 
\begin{align}\label{eq:qtrim}
\| \SA_K^{(t)}\|_{\BS_q}^q 
\lesssim \sum_{\bk\in K}\, v_{\bk}^q,
\end{align}
Moreover, the operator $\SA_K^{(t)}$ and the integral  
$(\SA_K)_{\tiny{\rm int}}(s), s>0$, defined in \eqref{eq:inta}, satisfy the bounds
\begin{align}\label{eq:hstrim}
\| \SA_K^{(t)}\|_{\BS_2}^2\lesssim \sum_{\bk\in K}\, v_{\bk}^2,
\quad 
\| (\SA_K)_{\tiny{\rm int}}(s)\|_{\BS_2}^2
\lesssim s^2 \sum_{\bk\in K}\, v_{\bk}^2.
\end{align}
The constants in the above bounds are independent of the symbol $a$, and of 
$t\in [0, 1]$ and $s>0$.
\end{prop}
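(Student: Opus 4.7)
The plan is to reduce both bounds to Proposition \ref{prop:symbol} (for \eqref{eq:qtrim}) and Lemma \ref{lem:hs} (for \eqref{eq:hstrim}); the only real work is the geometric bookkeeping required to pass between the lattice quasi-norm \eqref{eq:brackh}, built from the unit cubes $\CC_\bn$, and the sequence $v_\bk$, built from the side-$2$ cubes $\CC^{(2)}_\bk$.

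For \eqref{eq:qtrim} I would write $a_K = a\chi_K$ with $\chi_K = \sum_{\bk \in K}\z_\bk$ and apply Leibniz. Since each $\z_\bk$ is a translate of one fixed function supported in $\CC^{(2)}_\bzero$, and the cover has uniformly bounded overlap, every derivative of $\chi_K$ is pointwise bounded with constants depending only on $\z$ and on the order, and vanishes outside $\bigcup_{\bk \in K} \CC^{(2)}_\bk$. This produces the pointwise estimate
\[
F_n(\btau; a_K) \;\lesssim\; F_n(\btau; a)\, \id_{\bigcup_{\bk \in K}\CC^{(2)}_\bk}(\btau).
\]
Cauchy--Schwarz on each unit cube $\CC_\bn$, together with the inclusion $\CC_\bn \cap \CC^{(2)}_\bk \subset \CC^{(2)}_\bk$, then yields
\[
\int_{\CC_\bn} F_n(a_K)\, d\btau \;\lesssim\; \bigg(\sum_{\bk \in K : \CC_\bn \cap \CC^{(2)}_\bk \neq \emptyset} v_\bk^2\bigg)^{1/2}.
\]
Since $q \le 1$, I would raise this to the $q$-th power, use the sub-additivity $\big(\sum_i x_i\big)^{q/2} \le \sum_i x_i^{q/2}$, sum over $\bn \in \Z^{2d}$, and swap the order of summation. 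Because each $\bk$ meets at most a dimensional constant number of unit cubes, this gives $\1 F_n(a_K)\1_{1, q}^q \lesssim \sum_{\bk \in K} v_\bk^q$. Proposition \ref{prop:symbol} then yields \eqref{eq:qtrim} with a constant uniform in $t \in [0, 1]$.

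For \eqref{eq:hstrim}, Lemma \ref{lem:hs} immediately gives $\|\SA_K^{(t)}\|_{\BS_2}^2 = (2\pi)^{-d}\|a_K\|_{\plainL2}^2$. Using $|a_K| \le |a|\,\id_{\bigcup_{\bk \in K}\CC^{(2)}_\bk}$ together with the trivial bound $|a| \le F_n(\cdot\, ; a)$ (the $k = s = 0$ summand in \eqref{eq:F}), and exploiting the finite multiplicity of the cover, I obtain $\|a_K\|_{\plainL2}^2 \lesssim \sum_{\bk \in K} v_\bk^2$. Applying the same reasoning to the bound \eqref{eq:int} in place of \eqref{eq:hs} produces the estimate for $(\SA_K)_{\tiny{\rm int}}(s)$, with the factor $s^2$ inherited directly from \eqref{eq:int}.

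The main obstacle, such as it is, is purely combinatorial: coordinating the two grids $\{\CC_\bn\}$ and $\{\CC^{(2)}_\bk\}$ and keeping all multiplicity constants dimensional. Once Cauchy--Schwarz is used to go from $\plainL1$ to $\plainL2$ on unit cubes, and the bounded overlap of the cubes $\CC^{(2)}_\bk$ is invoked, the rest of the argument is routine.
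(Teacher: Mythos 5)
Your proposal is correct and follows essentially the same route as the paper: bound $F_n(a_K)$ pointwise via Leibniz and the bounded overlap of the $\z_\bk$, pass from $\plainL1$ on unit cubes to $\plainL2$ on the overlapping cubes $\CC^{(2)}_\bk$ by Cauchy--Schwarz, use $q$-subadditivity and finite multiplicity to get $\1 F_n(a_K)\1_{1,q}^q\lesssim\sum_{\bk\in K}v_\bk^q$, and invoke Proposition \ref{prop:symbol}; the Hilbert--Schmidt bounds follow from Lemma \ref{lem:hs} exactly as you describe. No gaps.
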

 
\begin{proof}
Apply Proposition \ref{prop:symbol} to the operator $A_K$. 
For each $\bm\in \Z^{2d}$ we have 
\begin{align*}
\|F(a_K)\|_{\plainL2(\CC_\bm)}^q\lesssim  \sum_{\bk\in K: |\bk-\bm|_{\infty}\le 1}\, 
\|F(a)\|_{\plainL2(\CC^{(2)}_\bk)}^q =   \sum_{\bk\in K: |\bk-\bm|_{\infty}\le 1}\, v_\bk^q,
\end{align*}
where $|\,\cdot\,|_\infty$ denotes the standard $\plainl\infty$-norm on $\R^{2d}$.
Therefore, 
\begin{align*}
\sum_{\bm\in\Z^{2d}}\|F(a_K)\|_{\plainL1(\CC_\bm)}^q \lesssim 
\sum_{\bk\in K}   \sum_{\bm: |\bk-\bm|_{\infty}\le 1}\, v_\bk^q
\lesssim
\sum_{\bk\in K}\, v_\bk^q.
\end{align*}
Now \eqref{eq:qtrim} follows from \eqref{eq:symbol}. 

The bounds \eqref{eq:hstrim} are consequences of \eqref{eq:hs} and \eqref{eq:int}.
\end{proof}
  
Now we can estimate the singular values of $\SA^{(t)}$ in terms of the classes $\GS_{p, \s}$. 
To this end assume that for the sequence $\sv$ defined in \eqref{eq:vk}, 
the following is satisfied: 
\begin{align}\label{eq:sup}
M_{p,\s}[{\sf v}]:= \sup_{E>0}\, E 
\log^{-\s}\big(E^{-1} + 2\big)\,
\big(\# \{\bk\in 
\Z^{2d}: v_{\bk}>E\}\big)^{\frac{1}{p}}<\infty,
\end{align}
with some $p\in (0, 2)$ and $\s\ge 0$. For such a sequence $\sv$ define 
\begin{align}\label{eq:limsup}
M_{p,\s}^{\circ}[{\sf v}]:=\limsup_{E\downarrow 0}\, \,E 
\log^{-\s}\big(E^{-1} + 2\big)\,
\big(\# \{\bk\in 
\Z^{2d}: v_{\bk}>E\}\big)^{\frac{1}{p}}, 
\end{align}
which is automatically finite.  Observe that in contrast to $M_{p, \s}$, the functional 
$M_{p, \s}^\circ$ is (positively) homogeneous in $\sv$ for all $p>0, \s \ge 0$:  
$M_{p,\s}^{\circ}[\mu {\sf v}] = \mu M_{p,\s}^{\circ}[{\sf v}]$ 
for any $\mu >0$. This is why in what follows we work mostly with $M_{p,\s}^{\circ}$.
 
The proof of the next theorem is inspired by the proof of 
\cite[Theorem 4.6]{Simon2005}.

\begin{thm}\label{thm:limsup} 
Let $p\in (0, 2)$, and let the sequence ${\sf v} = \{v_{\bk}\}$ be defined by \eqref{eq:vk} 
with $n=n(q)$ where $q \in (0, 1], q < p$. 
Suppose that for some $\s\ge 0$, the functional $M_{p, \s}(\sv)$ is finite.  
Then $\op^{(t)}(a)\in \GS_{p, \s}$ and 
\begin{align}\label{eq:qc}
\SfG_{p, \s}(\op^{(t)}(a))\lesssim M_{p,\s}^{\circ}[{\sf v}].
\end{align}
In particular, if $\s = 0$, then $\op^{(t)}(a)\in \BS_{p, \infty}$.
%
%
The constant in the above inequality is independent of the symbol $a$ and the number $t$.  
 \end{thm}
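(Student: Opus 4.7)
The plan is to perform a dyadic decomposition of the index set $\Z^{2d}$ by the magnitude of $v_\bk$, apply Proposition~\ref{prop:vk} to each shell to obtain complementary Schatten bounds, and combine the pieces via the Ky Fan counting inequality \eqref{eq:countqifan} together with the subadditive structure of Proposition~\ref{prop:triangle}. Concretely, for $j\in\Z$ set
\[
K_j:=\{\bk\in\Z^{2d}:\,2^{-j-1}<v_\bk\le 2^{-j}\},\qquad \SA_j:=\op^{(t)}(a_{K_j}),
\]
so that $\op^{(t)}(a)=\sum_j\SA_j$. Since $p<2$, the hypothesis $M_{p,\s}[\sv]<\infty$ forces $\sv\in\plainl{2}$, so only finitely many negative $j$ contribute and they have uniformly bounded counts. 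For $j\ge 0$ the assumption translates (via $E=2^{-j-1}$) into the shell estimate $\#K_j\lesssim 2^{jp}(1+j)^{\s p}$, and the limsup functional $M^{\circ}_{p,\s}[\sv]$ controls the asymptotic constant as $j\to\infty$.

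Proposition~\ref{prop:vk} then supplies two complementary estimates for each block, $\|\SA_j\|_{\BS_q}^q\lesssim 2^{-jq}\#K_j$ and $\|\SA_j\|_{\BS_2}^2\lesssim 2^{-2j}\#K_j$, whose Chebyshev consequences are $n(s_j;\SA_j)\le\min\bigl(s_j^{-q}\|\SA_j\|_{\BS_q}^q,\,s_j^{-2}\|\SA_j\|_{\BS_2}^2\bigr)$. For a given threshold $s>0$ I set $J:=\lceil\log_2 s^{-1}\rceil$ and distribute the budget as $s_j=C s/(1+|j-J|)^2$, the constant $C$ being fixed by $\sum_j s_j=s$. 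The two Schatten estimates are balanced precisely at $s_j\asymp 2^{-j}$, i.e.\ at $j\approx J$, so for $j\le J$ (where the $v_\bk$ are large) I use the $\BS_q$-bound, and for $j>J$ (where the $v_\bk$ are small) the Hilbert--Schmidt bound. The Ky Fan inequality \eqref{eq:countqifan} then produces
\[
n(s;\op^{(t)}(a))\le\sum_{j\le J}s_j^{-q}\|\SA_j\|_{\BS_q}^q+\sum_{j>J}s_j^{-2}\|\SA_j\|_{\BS_2}^2,
\]
and summation of the two geometric series in $|j-J|$ yields a bound of the form $s^{-p}$ times an appropriate power of $\log s^{-1}$. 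Translating this through the sup-form \eqref{eq:metric} of the $\GS_{p,\s}$-norm gives $\op^{(t)}(a)\in\GS_{p,\s}$, and then restricting the decomposition to $j\ge J_0$ and letting $J_0\to\infty$ replaces $M_{p,\s}[\sv]$ by $M^\circ_{p,\s}[\sv]$, thereby supplying the limsup inequality \eqref{eq:qc}.

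I expect the main obstacle to be the careful matching of logarithmic factors between the two natural scales: the weight $\log^{-\s}(E^{-1})$ attached to the symbol threshold in the definition of $M_{p,\s}$, and the weight $\log^{-\s/p}(k)$ attached to the index in the definition of $\|\cdot\|_{\GS_{p,\s}}$. A straightforward additive summation of the Chebyshev bounds accumulates a spurious logarithmic factor, so one has to replace the additive step by the quasi-triangle inequality \eqref{eq:supinf} of Proposition~\ref{prop:triangle}, whose subadditivity of $t\mapsto t^{p/(p+1)}$ is compatible both with the passage to the limsup and with the homogeneity of $M^\circ_{p,\s}$ in $\sv$. The uniformity of the implicit constants in Proposition~\ref{prop:vk} over $t\in[0,1]$ is then inherited by all of the above, which accounts for the $t$-independence of the constant in \eqref{eq:qc}.
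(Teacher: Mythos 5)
Your core argument is correct and takes a genuinely different route from the paper. The paper follows Simon's proof of Cwikel's theorem: it makes a single split of the symbol at a threshold $2^{2m}$, estimates the ``large'' piece in $\BS_q$ and the ``small'' piece in $\BS_2$, converts each to an individual singular-value bound via $s_k(A)\le k^{-1/r}\|A\|_{\BS_r}$, applies the Ky Fan inequality \eqref{eq:qifan} at a single index $k$, and only then chooses $m$ as a function of $k$ so that $k^{1/p}|m|^{-\s}2^{2m}\asymp 1$. You instead decompose into all dyadic shells $K_j$ at once, use Chebyshev on each shell, distribute a threshold budget $s_j\asymp s(1+|j-J|)^{-2}$, and sum the counting functions via \eqref{eq:countqifan}. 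Your version works: for $j\le J$ the terms $s_j^{-q}\|\SA_j\|_{\BS_q}^q\lesssim s^{-q}(1+|j-J|)^{2q}2^{(p-q)j}(1+j)^{\s p}$ are summable because the geometric factor $2^{-(p-q)|j-J|}$ dominates the polynomial weight (here $q<p$ is essential), and symmetrically for $j>J$ using $p<2$; both sums are dominated by the $j=J$ term and give $n(s;\op^{(t)}(a))\lesssim s^{-p}\log^{\s p}(s^{-1})$, i.e.\ $s_k\lesssim k^{-1/p}\log^{\s}k$ --- exactly the bound the paper's proof arrives at. Your treatment of the finitely many negative shells and the passage from $M_{p,\s}$ to $M^{\circ}_{p,\s}$ by discarding $j<J_0$ (the discarded part lies in $\BS_q$, hence has vanishing $\SfG_{p,\s}$, and is removed by Corollary \ref{cor:pert}) are both sound.

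The one genuine error is in your final paragraph, and it points in exactly the wrong direction. There is no ``spurious logarithmic factor'' in the additive summation: with the weights $(1+|j-J|)^{-2}$ the series converge geometrically and no logarithm accumulates, so no repair is needed. Worse, the repair you propose --- applying the quasi-triangle inequality \eqref{eq:tri} of Proposition \ref{prop:triangle} across the infinite family of shells --- would fail: a direct computation from the two Schatten bounds shows $\|\SA_j\|_{\GS_{p,\s}}\gtrsim 2^{-j}(\#K_j)^{1/p}\log^{-\s/p}(\#K_j)$, which is bounded below (indeed of size $(1+j)^{\s-\s/p}$ when the shell counts are saturated), so $\sum_j\|\SA_j\|_{\GS_{p,\s}}^{p/(p+1)}$ diverges. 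Proposition \ref{prop:triangle} is legitimately used only for the \emph{two-term} splits (head versus tail in the $M^{\circ}$ reduction), which you also invoke correctly. Delete the claim that the additive step must be replaced, keep the Chebyshev summation as written, and the proof stands; you then land at the same final estimate $s_k k^{1/p}\log^{-\s}(k+2)\lesssim M^{\circ}_{p,\s}[\sv]$ as the paper does.
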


\begin{proof} 
Note that $M_{p,\s}^{\circ}[\mu {\sf v}] = \mu M_{p,\s}^{\circ}[{\sf v}]$ 
for any $\mu >0$.Thus, if $M_{p, \s}^{\circ}[{\sf v}]\not = 0$, then without loss of generality 
we may assume that $M_{p, \s}^{\circ}[{\sf v}] = 1$. As a result, 
throughout the proof we assume that either 
$M_{p,\s}^{\circ}[ {\sf v}]  = 0$ or $M_{p,\s}^{\circ}[{\sf v}] = 1$. 
Denote 
\begin{align}\label{eq:mn}
S_{n} = S_n[{\sf v}] :=\sup_{0<E\le 2^n}\, E 
\log^{-\s}\big(E^{-1} + 2\big)\,
\big(\# \{\bk\in 
\Z^{2d}: v_{\bk}>E\}\big)^{\frac{1}{p}}.
\end{align}
and set $S = S[{\sf v}] : = \sup_{n\in\Z} S_n<\infty$. 
Thus $S_n\downarrow 0$ or $S_n\downarrow 1$ as $n\to -\infty$.

Fix an integer $m$ satisfying the condition 
\begin{align}\label{eq:upperm}
m\le -1-S[{\sf v}],
\end{align}
and split $\SA :=\op^{(t)}(a)$ into two operators: 
\begin{align*}
\SA = \sum_{\bk: v_\bk > 2^{2m}} \op(a\, \z_\bk) 
+ \sum_{\bk: v_\bk \le  2^{2m}} \op(a\, \z_\bk)= : {\sf A}^{(1)}_m + {\sf A}^{(2)}_m. 
\end{align*}
In order to estimate the first term we use Proposition 
\ref{prop:vk} with $K = \{\bk: v_\bk>2^{2m}\}$. The set $K$ is finite, and 
hence the restriction of the sequence $\{\sf v\}$ to $K$ belongs to $\plainl{q}(K)$. 
Therefore, by Proposition \ref{prop:vk}, we have the estimate 
\begin{align*}
\|{\sf A}^{(1)}_m\|_{\BS_q}^q\lesssim \sum_{\bk: v_\bk > 2^{2m}}\, v_\bk^q 
= \sum_{l = 2m}^\infty \, \sum_{\bk: 2^{l}<v_\bk\le 2^{l+1}} v_{\bk}^q.
\end{align*} 
Due to the definition \eqref{eq:mn}, 
\begin{align*}
\#\{\bk: v_\bk > 2^l\}\le  2^{-p l} \log^{p\s}(2^{-p l} +2)\, S_l^p
\lesssim (l_-)^{p\s} 2^{-pl}\, S_l^p,
\end{align*}
where $l_- = \max\{1, -l\}$, 
and hence,
\begin{align}\label{eq:layer}
\sum_{\bk: 2^{l}<v_\bk\le 2^{l+1}}\, v_\bk^q 
\lesssim S_l^p\, 2^{(l+1)q} ( l_-)^{p\s} 2^{-p l}.
\end{align}
Since $p >q$ we obtain the bounds
\begin{align*}
\sum_{l = 2m}^\infty \, \sum_{\bk: 2^{l}<v_\bk\le 2^{l+1}} v_{\bk}^q
\lesssim &\    \sum_{l = 2m}^\infty \, 
S_l^p  2^{(l+1)q} ( l_-)^{p\s} 2^{-p l}\\
\le  &\  
S_m^p |2m|^{p\s}\sum_{l = 2m}^m \, 2^{(q - p)l} 
+  S^p |m|^{p\s}\sum_{l = m}^\infty \,  2^{(q - p)l} 
\\
\lesssim &\  S_m^p |m|^{p\s} 2^{2(q-p)m} + S^p |m|^{p\s} 2^{(q-p)m}\\
= &\ |m|^{p\s} 2^{2(q-p)m} \big(  S_m^p + S^p 2^{-(q-p)m}\big).
\end{align*}
Because of the condition 
\eqref{eq:upperm}, the term in the brackets does not exceed 
$(S_m+2^{-\varepsilon |m|})^p$ 
with some $\varepsilon>0$, which leads to 
the estimate 
\begin{align}\label{eq:a1}
\|{\sf A}^{(1)}_m\|_{\BS_q}^q\lesssim \, |m|^{p\s}\, 2^{2(q-p)m}\,
T_m^p,\quad T_m := S_m +  2^{-\varepsilon  |m|}. 
\end{align}
For the operator ${\sf A}^{(2)}_m$ we use 
\eqref{eq:hstrim} with $K = \{ \bk: v_\bk \le 2^{2m}\}$ to conclude that 
\begin{align*}
\| {\sf A}^{(2)}_m\|_{\BS_2}^2
\le  
\sum_{\bk: v_\bk \le 2^{2m}}\, v_\bk^2
=  \sum_{l \le 2m-1} \, \sum_{\bk: 2^{l}<v_\bk\le 2^{l+1}} v_{\bk}^2.
\end{align*} 
Therefore, from \eqref{eq:layer} with $q=2$ we obtain that 
\begin{align*}
\| {\sf A}^{(2)}_m\|_{\BS_2}^2
 \lesssim   \sum_{l\le 2m-1} \, S_l^p \, |l|^{p\s}\,2^{2(l+1)} 2^{-p l} 
 \lesssim S_{m}^{p} |m|^{p\s}\, 2^{2(2-p)m}, 
\end{align*}
where we have used that $p < 2$. Using \eqref{eq:qifan} and \eqref{eq:ind}, 
we arrive at the bounds 
\begin{align*}
s_{2k}(\SA)\le &\ s_{2k-1}(\SA)\le 
s_k(\SA^{(1)}_m) + s_k(\SA^{(2)}_m)\\ 
\lesssim &\  k^{-\frac{1}{q}} |m|^{\frac{p\s}{q}}\, 2^{2(1-\frac{p}{q})m} 
\, T_m^{\frac{p}{q}} 
+  k^{-\frac{1}{2}} |m|^{\frac{p\s}{2}}\, 
2^{(2- p)m}\, S_m^{\frac{p}{2}}\\ 
\lesssim &\ k^{-\frac{1}{p}} |m|^{\s}
\bigg[
k^{\frac{1}{p}(1-\frac{p}{q})} 
|m|^{-\s(1-\frac{p}{q})}
2^{2(1-\frac{p}{q})m} \, T_m^{\frac{p}{q}} 
+ k^{\frac{1}{p}(1-\frac{p}{2})} |m|^{-\s(1-\frac{p}{2})} 2^{(2-p)m}
S_m^{\frac{p}{2}} 
\bigg]\\
= &\  k^{-\frac{1}{p}} |m|^{\s}
\bigg[
\big(k^{\frac{1}{p}}  
|m|^{-\s} 2^{2m}\big)^{1-\frac{p}{q}}\, T_m^{\frac{p}{q}}
+ \big(k^{\frac{1}{p}}  
|m|^{-\s} 2^{2m} \big)^{1-\frac{p}{2}}\, S_m^{\frac{p}{2}}
\bigg].
\end{align*}
For each $k\ge 1$ take $m$ such that $10^{-1}\le  k^{\frac{1}{p}} |m|^{-\s} 2^{2m}\le 10$. This implies that $m\to-\infty$ as $k\to\infty$, so that the condition 
\eqref{eq:upperm} is satisfied for large $k$.  
Therefore 
\begin{align*}
s_{2k}(\SA)\le s_{2k-1}(\SA)\lesssim k^{-\frac{1}{p}} |m|^{\s}\,
\big( T_m^{\frac{p}{q}}
+ S_m^{\frac{p}{2}}
\big).
\end{align*}
Since $|m|\lesssim \log (k+2)$, this implies the estimate 
\begin{align*}
s_{k}(A) k^{\frac{1}{p}}\frac{1}{\log^\s (k+2)}
\lesssim  T_m^{\frac{p}{q}}
+ S_m^{\frac{p}{2}}.
\end{align*}
Taking $\limsup$ as $k\to\infty$ and remembering that 
$\lim T_m = \lim S_m$ is either $= 1$ or $0$
as $m\to - \infty$, we get the required result. 
\end{proof}

\begin{rem}\label{rem:qc}
We stress the quasiclassical nature of the bound \eqref{eq:qc}. 
Indeed, this bound implies 
that the counting function $n(s; \SA^{(t)})$ is controlled by the volume of the 
``classicaly allowed" region where 
$v_k > s$.  
\end{rem}

\section{Examples of bounds for singular values }
\label{sect:examples}

In this section we illustrate the utility of Theorem \ref{thm:limsup} by two examples. 
\subsection{A variant of Theorem \ref{thm:limsup}}
First we prove a variant of Theorem \ref{thm:limsup} 
that could be more convenient for applications. Let $\rho = \rho(\btau), \btau\in\R^{2d}$,  
be some positive function.  
For $p >0$ and $\s\ge 0$ define 
\begin{align}\label{eq:nocirc}
N_{p, \s}[\rho]:=\sup_{E>0} E 
\log^{-\s}(E^{-1}+2) |\{\btau: \rho(\btau)>E\}|^{\frac{1}{p}}. 
\end{align}
This quantity is finite if and only if the following functional is finite:
\begin{align}\label{eq:ncirc}
N_{p, \s}^\circ[\rho]:=\limsup_{E\downarrow 0}E 
\log^{-\s}(E^{-1}+2) |\{\btau: \rho(\btau)>E\}|^{\frac{1}{p}}. 
\end{align}
Similarly to $M_{p, s}^{\circ}[\sv]$ (see \eqref{eq:limsup}), it 
is homogeneous in $\rho$: $N_{p, \s}^\circ[\mu\rho] 
= \mu N_{p, \s}^\circ[\rho] $ for any $\mu >0$.

\begin{thm}\label{thm:qcl}
Let $\rho = \rho(\btau), \btau\in \R^{2d}$, be a positive function such that 
\begin{align}\label{eq:slow}
\rho(\btau)\asymp \rho(\bnu),\quad \textup{if}\quad |\btau-\bnu|\le 1.
\end{align}
Suppose also that $N_{p, \s}[\rho]<\infty$ for some $p\in (0, 2)$ and some $\s\ge0$. 
Assume that  
\begin{align*}
|\nabla_{\btau}^m a(\btau)|\lesssim \rho(\btau)
\end{align*}
for all $m = 0, 1, \dots$, with a constant potentially depending on $m$. Then 
$\SA^{(t)} = \op^{(t)}(a)\in \GS_{p, \s}$ 
and $\SfG_{p, \s}(\SA^{(t)})\lesssim N_{p, \s}^\circ[\rho]$.
\end{thm}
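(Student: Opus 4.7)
The plan is to apply Theorem \ref{thm:limsup}. Fix any $q \in (0, 1]$ with $q < p$ (for instance $q = \min(1, p/2)$), set $n = n(q)$ as in \eqref{eq:n}, and consider the lattice sequence $\sv = \{v_\bk\}$ from \eqref{eq:vk}. The idea is to bound $\sv$ pointwise by $\rho$, and then translate the resulting lattice-type distributional bound into a Lebesgue-type one. The hypothesis $|\nabla_\btau^m a(\btau)| \lesssim \rho(\btau)$ immediately gives $F_n(\btau; a) \lesssim \rho(\btau)$. Iterating the slow-variation property \eqref{eq:slow} a bounded number of times (enough to cover the diameter of $\CC^{(2)}_\bk$) shows $\rho(\btau) \asymp \rho(\bk)$ throughout $\CC^{(2)}_\bk$, with constants depending only on $d$ and on the implicit constant in \eqref{eq:slow}. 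Hence
\begin{align*}
v_\bk^2 \lesssim \int_{\CC^{(2)}_\bk} \rho(\btau)^2 \, d\btau \lesssim \rho(\bk)^2,
\end{align*}
so $v_\bk \le C_0 \, \rho(\bk)$ with $C_0$ independent of $\bk$.

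Next I would use slow variation in the reverse direction to compare distribution functions. Whenever $v_\bk > E$, one has $\rho(\bk) > C_0^{-1} E$ and hence $\rho(\btau) > cE$ for every $\btau$ in the unit cube $\CC_\bk$ centred at $\bk$, with $c > 0$ independent of $\bk$ and $E$. Since the cubes $\{\CC_\bk\}_{\bk \in \Z^{2d}}$ are essentially disjoint and tile $\R^{2d}$, summing their Lebesgue measures yields
\begin{align*}
\#\{\bk \in \Z^{2d}: v_\bk > E\} \le |\{\btau \in \R^{2d}: \rho(\btau) > cE\}|.
\end{align*}
Substituting into the definitions \eqref{eq:sup} and \eqref{eq:limsup} and performing the change of variable $F = cE$ gives $M_{p,\s}[\sv] \lesssim N_{p,\s}[\rho] < \infty$ and $M_{p,\s}^\circ[\sv] \lesssim N_{p,\s}^\circ[\rho]$. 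Theorem \ref{thm:limsup} then delivers $\op^{(t)}(a) \in \GS_{p, \s}$ together with $\SfG_{p, \s}(\op^{(t)}(a)) \lesssim N_{p,\s}^\circ[\rho]$, as claimed.

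The only point requiring a little care — more a bookkeeping issue than a genuine obstacle — is that the weight $\log^{-\s}(E^{-1}+2)$ is not literally invariant under the rescaling $E \mapsto cE$ used in the change of variable above. It is however slowly varying at $0$, so the ratio $\log^{-\s}(E^{-1}+2)/\log^{-\s}((cE)^{-1}+2)$ tends to $1$ as $E \downarrow 0$; this is exactly what the $\limsup$ inequality requires, while the $\sup$ version picks up only a harmless multiplicative constant depending on $\s$ and $c$, not on $a$ or $t$.
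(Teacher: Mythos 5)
Your argument is correct and follows the same route as the paper's proof: bound $v_\bk\lesssim\rho(\bk)$ via the hypothesis and the slow-variation condition \eqref{eq:slow}, transfer the distributional bound from the lattice to Lebesgue measure to get $M_{p,\s}[\sv]\lesssim N_{p,\s}[\rho]$ and $M_{p,\s}^\circ[\sv]\lesssim N_{p,\s}^\circ[\rho]$, then invoke Theorem \ref{thm:limsup}. The paper states these steps tersely; your write-up, including the remark on the slowly varying logarithmic weight under the rescaling $E\mapsto cE$, simply makes the same reasoning explicit.
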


\begin{proof} 
Let us pick a number $q\in (0, 1]$, $q <p$, and define the sequence $v_\bk, \bk\in\Z^{2d}$, 
as in \eqref{eq:vk}. Clearly, $v_\bk\lesssim \rho(\bk)$. Due to the condition \eqref{eq:slow}, 
we have  
$M_{p, \s}^\circ[\sv]\lesssim N_{p, \s}^\circ[\rho]$. Now the claim follows from Theorem 
\ref{thm:limsup}.
\end{proof}

We apply the above theorem to symbols with a power-like decay.

\subsection{Example 1} 
\begin{cor}\label{cor:homog}
Suppose that for all $m = 0, 1, 2, \dots,$ the symbol $a$ satisfies 
\begin{align}\label{eq:homog}
|\nabla_{\btau}^m a(\btau)|\lesssim \lu \btau \ru^{-\g},
\end{align}
with some $\g > d$, and let $p = 2d\g^{-1}$. Then $N_{p, 0}[\lu \btau \ru^{-\g}]<\infty$ 
and $\op^{(t)}(a)\in\BS_{p, \infty}$ for all $t\in [0, 1]$.
\end{cor}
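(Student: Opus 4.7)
The plan is to verify the hypotheses of Theorem \ref{thm:qcl} with $\rho(\btau) = \lu\btau\ru^{-\g}$ and $\s = 0$, and then simply read off the conclusion. The work reduces to a direct volume computation for the super-level sets of $\rho$ plus an elementary slow-variation check.

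First I would confirm the slow-variation property \eqref{eq:slow}: if $|\btau - \bnu|\le 1$, then by the triangle inequality $\lu\btau\ru \asymp \lu\bnu\ru$, and hence $\lu\btau\ru^{-\g}\asymp \lu\bnu\ru^{-\g}$. Next I would compute $N_{p,0}[\rho]$ from \eqref{eq:nocirc}. The super-level set $\{\btau\in\R^{2d}:\lu\btau\ru^{-\g}>E\}$ is a ball of radius comparable to $E^{-1/\g}$ for small $E$ (and empty for $E\ge 1$), so
\begin{align*}
|\{\btau:\lu\btau\ru^{-\g}>E\}|\lesssim E^{-\frac{2d}{\g}}\quad\text{for all}\ E>0.
\end{align*}
With $p = 2d\g^{-1}$ this gives $E \,|\{\btau:\lu\btau\ru^{-\g}>E\}|^{1/p}\lesssim 1$, so $N_{p,0}[\rho]<\infty$; note that $\g>d$ ensures $p\in (0,2)$ as required by Theorem \ref{thm:qcl}.

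The hypothesis \eqref{eq:homog} is precisely the pointwise estimate on $|\nabla_\btau^m a|$ demanded by Theorem \ref{thm:qcl}, for every $m\ge 0$. Applying that theorem yields $\op^{(t)}(a)\in \GS_{p,0}$ with a bound independent of $t\in[0,1]$. Since $f_0(t)=t$, the class $\GS_{p,0}$ coincides with $\BS_{p,\infty}$, so we conclude $\op^{(t)}(a)\in\BS_{p,\infty}$ uniformly in $t$.

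There is no real obstacle here: the only subtlety is bookkeeping on the exponent $p = 2d/\g$, which must be strictly less than $2$ in order to invoke Theorem \ref{thm:qcl}; this is exactly ensured by the hypothesis $\g>d$. Everything else is either the definition of $N_{p,\s}$ or an immediate invocation of the previously established theorem.
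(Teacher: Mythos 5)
Your proof is correct and follows essentially the same route as the paper: verify the hypotheses of Theorem \ref{thm:qcl} with $\rho(\btau)=\lu\btau\ru^{-\g}$, bound the super-level set volume by $E^{-2d/\g}$, and conclude via $\GS_{p,0}=\BS_{p,\infty}$. Your explicit checks of the slow-variation condition \eqref{eq:slow} and of the constraint $p=2d/\g<2$ (guaranteed by $\g>d$) are details the paper leaves implicit, and they are accurate.
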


\begin{proof} We use Theorem \ref{thm:qcl} and estimate the phase space volume 
\begin{align*}
V(E) = \iint \id_{\{\lu \btau\ru^{-\g} > E\}}  \,d\btau. 
\end{align*}
As $V(E) = 0$ 
for $E\ge 1$ we may assume that $E < 1$. 
A straightforward calculation gives the bound 
\begin{align*}
V(E)\le \iint\limits_{|\btau|\le E^{-\frac{1}{\g}}} \, d\btau\lesssim E^{-\frac{2d}{\g}},
\end{align*}
so that $N_{p, 0}[\lu\btau\ru^{-\g}] < \infty$ and hence, by Theorem \ref{thm:qcl}, 
$\op(a):=\op^{(t)}(a)\in \BS_{p, \infty}$, as claimed.
 
\end{proof}

As mentioned in the Introduction, for arbitrary 
domains $\Om\subset \R^2$ with piece-wise smooth boundary, 
the eigenvalues of $\SA_\Om$ decrease in accordance with the bound \eqref{eq:34}. 
Corollary \ref{cor:homog} allows us to establish an analogous bound for domains 
$\Om\in \R^{2d}$ with arbitrary $d\ge 1$. We 
have however to assume that the boundary of $\Om$ has a strictly positive curvature.

\begin{prop}\label{prop:curved} 
Let $\Om\subset\R^{2d}, d\ge 1,$ be a bounded open convex set 
with $\plainC\infty$-boundary of strictly positive 
curvature, and let $\SA_\Om = \op^{\rm w}(\id_\Om)$. 
Then 
$\SA_\Om\in \BS_{p, \infty}$, where $p = 4d(2d+1)^{-1}$, i.e. 
\begin{align*}
s_k(\SA_\Om)\lesssim k^{-\frac{1}{2} - \frac{1}{4d}},\quad k = 1, 2, \dots.
\end{align*}
\end{prop}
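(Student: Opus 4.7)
The plan is to reduce to the dual symbol via Theorem \ref{thm:red} and then apply Corollary \ref{cor:homog}. Since the reflection $U$ is unitary, $\SA_\Om = \op^{\rm w}(\id_\Om)$ and $\op^{\rm w}(\id_\Om^*) = \SA_\Om U$ have identical singular values, so it suffices to establish the Schatten bound for $\op^{\rm w}(\id_\Om^*)$. By definition \eqref{eq:dual},
\begin{align*}
\id_\Om^*(x, \xi) = 2^d (\mathcal F \id_\Om)(2\xi, -2x),
\end{align*}
so $\id_\Om^*$ is, up to rescaling and a reshuffle of arguments, the Fourier transform of the indicator of the bounded strictly convex body $\Om \subset \R^{2d}$.

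The next step is to invoke the classical Herz estimate: for a bounded convex set in $\R^N$ with $\plainC\infty$-boundary of everywhere strictly positive Gaussian curvature, $|\widehat{\id_\Om}(\bxi)| \lesssim \lu \bxi\ru^{-(N+1)/2}$. Taking $N = 2d$ gives the zeroth-order decay of $\id_\Om^*$. To upgrade this to a bound on all derivatives, I would use the identity
\begin{align*}
\p_x^\alpha \p_\xi^\beta \id_\Om^*(x, \xi) = c_{\alpha, \beta}\, \big(\mathcal F (\xi^\alpha x^\beta \id_\Om)\big)(2\xi, -2x),
\end{align*}
so each derivative reduces to the Fourier transform of $P(x, \xi)\id_\Om$ with $P$ a polynomial. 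The function $P\id_\Om$ is smooth inside $\Om$, vanishes outside, and has a jump across $\p\Om$; applying the divergence theorem to convert the volume integral to a boundary oscillatory integral on $\p\Om$, and then the standard stationary phase estimate (using strict positivity of the curvature), yields the same decay $\lesssim \lu\cdot\ru^{-(2d+1)/2}$ for every such Fourier transform. Consequently
\begin{align*}
|\nabla_\btau^m \id_\Om^*(\btau)| \lesssim \lu \btau\ru^{-(2d+1)/2}, \quad m = 0, 1, 2, \dots.
\end{align*}

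With $\gamma := (2d+1)/2 > d$, the hypothesis \eqref{eq:homog} of Corollary \ref{cor:homog} is met, so $\op^{\rm w}(\id_\Om^*) \in \BS_{p, \infty}$ with
\begin{align*}
p = \frac{2d}{\gamma} = \frac{4d}{2d+1},
\end{align*}
which translates into $s_k(\SA_\Om) \lesssim k^{-1/p} = k^{-1/2 - 1/(4d)}$, as claimed. The one step that is more than bookkeeping is the derivative-uniform curvature-based Fourier decay: the standard Herz statement is usually phrased only for $\widehat{\id_\Om}$ itself, so one must either cite a version that applies to $\widehat{f\id_\Om}$ for smooth $f$, or redo the stationary phase argument, checking that the positive curvature hypothesis carries the oscillatory boundary integral of any such weighted indicator to the decay rate $(N-1)/2$ on the surface, with one extra factor of $|\bxi|^{-1}$ coming from the initial integration by parts.
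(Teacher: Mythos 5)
Your argument is correct and follows the same route as the paper: reduce to the dual symbol via Theorem \ref{thm:red}, establish the decay $|\nabla_\btau^m \id_\Om^*(\btau)| \lesssim \lu\btau\ru^{-(2d+1)/2}$ for all $m$, and apply Corollary \ref{cor:homog} with $\g = (2d+1)/2$. The only difference is that the paper disposes of the derivative-uniform Fourier decay (the step you flag as needing a weighted Herz estimate or a stationary-phase redo) by citing H\"ormander's Theorem 7.7.14 and Corollary 7.7.15, which cover exactly this situation.
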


\begin{proof} Thanks to Theorem \ref{thm:red}, we just need to estimate singular values of 
the operator $\op^{\rm w}(\id_\Om^*)$ with the dual symbol $ \id_\Om^*$ 
defined in \eqref{eq:dual}.  
To benefit from Corollary \ref{cor:homog}, we need estimates for $ \id_\Om^*$ and its derivatives. 
Using \cite[Theorem 7.7.14]{Hoermander1993} and \cite[Corollary 7.7.15]{Hoermander1993} 
for the Fourier transform of $\id_\Om$ we can write the bounds 
\begin{align*}
|\nabla_{\btau}^m \,  \id_\Om^*(\btau)|\lesssim \lu \btau\ru^{-\frac{2d+1}{2}},\quad 
m = 0, 1, \dots.
\end{align*}
Now it follows from 
Corollary \ref{cor:homog} that $\SA_\Om\in\BS_{p, \infty}$ with the number $p$ specified above, 
as claimed.
\end{proof}

For $d = 1$ the obtained bound agrees with the estimate \eqref{eq:34}. 
We point out that the proofs of \eqref{eq:34} for $d = 1$ in \cite{Ramanathan1993} and 
\cite{Derkach2024} followed different methods and 
did not require strict positivity of the curvature. 

\subsection{Example 2}
The next corollary of Theorem 
\ref{thm:qcl} 
will be directly used in the proof of Theorem  
\ref{thm:main}.

\begin{cor}\label{cor:homo}
Suppose that for all $m, n = 0, 1, 2, \dots,$ the symbol $a$ satisfies
\begin{align}\label{eq:homo}
|\nabla_x^m \nabla_\xi^n a(\btau)|\lesssim \rho(\btau),\quad \rho(x, \xi) = \lu x\ru^{-\a}\, \lu\xi\ru^{-\b}
+ \lu x\ru^{-\b}\, \lu\xi\ru^{-\a} ,
\end{align}
with some $\a, \b> d/2$, and let $p = \max\{d\a^{-1}, d\b^{-1}\}$. Denote $\s = p^{-1}$ 
if $\a=\b$ and $\s = 0$ if $\a\not = \b$. 
Then  $N_{p, \s}[\rho] < \infty$ and $\op^{(t)}(a)\in \GS_{p, \s}$ for 
all $t\in [0, 1]$. 
\end{cor}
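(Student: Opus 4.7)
The plan is to apply Theorem \ref{thm:qcl} with the weight $\rho$ from \eqref{eq:homo}. Two ingredients must be checked: the slow variation condition \eqref{eq:slow} for $\rho$, and the finiteness of $N_{p,\s}[\rho]$. The first is immediate, since $\lu x\ru\asymp \lu x'\ru$ when $|x-x'|\le 1$ (and analogously in $\xi$), so each of the two product summands defining $\rho$ varies only by a multiplicative constant across a unit cube; hence $\rho(\btau)\asymp \rho(\bnu)$ whenever $|\btau-\bnu|\le 1$.

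The main work is the volume estimate for $V(E):=|\{\btau:\rho(\btau)>E\}|$, for which I would split $V(E)\le V_1(E/2)+V_2(E/2)$ with $V_j$ corresponding to the two summands $\rho_1(x,\xi)=\lu x\ru^{-\a}\lu\xi\ru^{-\b}$ and $\rho_2(x,\xi)=\lu x\ru^{-\b}\lu\xi\ru^{-\a}$. Without loss of generality assume $\b\le\a$, so $p=d/\b$. For $\rho_1>E$, only $\lu x\ru\le E^{-1/\a}$ contributes, and for each such $x$ the set of admissible $\xi$ has measure $\asymp(E\lu x\ru^{\a})^{-d/\b}$, which yields
\begin{align*}
V_1(E)\asymp E^{-d/\b}\int_{|x|\le E^{-1/\a}}\lu x\ru^{-\a d/\b}\,dx.
\end{align*}
When $\a>\b$ the exponent satisfies $\a d/\b>d$ and the integral converges to a constant, giving $V_1(E)\asymp E^{-d/\b}=E^{-p}$; when $\a=\b$ the integral equals $\log(2+E^{-1/\a})$ up to constants, giving $V_1(E)\asymp E^{-p}\log(2+E^{-1})$. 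Swapping the roles of $\a$ and $\b$ in the same computation (to handle $V_2$) gives, in the case $\a>\b$, a divergent integral of size $\asymp E^{-(d-\b d/\a)/\b}$, and altogether $V_2(E)\asymp E^{-d/\a}\cdot E^{-d/\b+d/\a}=E^{-p}$; in the case $\a=\b$ one finds the same logarithmic enhancement.

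Combining these, $V(E)\lesssim E^{-p}$ if $\a\ne\b$ and $V(E)\lesssim E^{-p}\log(2+E^{-1})$ if $\a=\b$, so the quantity $E\log^{-\s}(E^{-1}+2)V(E)^{1/p}$ is bounded uniformly in $E>0$ in both cases with $\s=0$ (respectively $\s=1/p$) as prescribed; hence $N_{p,\s}[\rho]<\infty$. The assumption $\a,\b>d/2$ ensures $p=d/\min\{\a,\b\}\in(0,2)$, so Theorem \ref{thm:qcl} applies and yields $\op^{(t)}(a)\in\GS_{p,\s}$ uniformly in $t\in[0,1]$. The only delicate point is the case split according to whether $\a=\b$ or not: the borderline equal case is precisely where the logarithm appears, matching the non-zero value of $\s$ in the conclusion.
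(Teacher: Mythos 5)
Your proposal is correct and follows essentially the same route as the paper: invoke Theorem \ref{thm:qcl}, split $\rho$ into its two summands, and bound the sublevel-set volume by the iterated integral, with the logarithm appearing exactly in the borderline case $\a=\b$. The only cosmetic differences are that you compute the second volume explicitly rather than noting it equals the first by the $x\leftrightarrow\xi$ symmetry, and that you verify condition \eqref{eq:slow} explicitly, which the paper leaves implicit.
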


\begin{proof} 
As in the proof of Corollary 
\ref{cor:homog}, 
it suffices to estimate the phase space volume  
\begin{align*}
\iint \id_{\{\lu x\ru^{-\a} \lu\xi\ru^{-\b} + 
\lu x\ru^{-\b}\, \lu\xi\ru^{-\a} > E\}}  \,dx d\xi. 
\end{align*}
Since 
\begin{align*}
\id_{\{\lu x\ru^{-\a} \lu\xi\ru^{-\b} + 
\lu x\ru^{-\b}\, \lu\xi\ru^{-\a} > E\}}  
\le \id_{\{\lu x\ru^{-\a} \lu\xi\ru^{-\b} > E/2\}}  
+ \id_{\{ 
\lu x\ru^{-\b}\, \lu\xi\ru^{-\a} > E/2\}},  
\end{align*}
we can estimate the volumes separately for $\lu x\ru^{-\a} \lu\xi\ru^{-\b}$ 
and $\lu x\ru^{-\b} \lu\xi\ru^{-\a}$. Let us  estimate, for example,  
\begin{align*}
V(E) = \iint \id_{\{\lu x\ru^{-\a} \lu\xi\ru^{-\b}> E\}}  \,dx d\xi,
\end{align*}
for $E <1$. 
Suppose for definiteness that 
$\a \le  \b$ and write:  
\begin{align}\label{eq:volest}
V(E)\lesssim &\ \int\limits_{ |x|\le E^{-\frac{1}{\a}}} \hskip 0.2cm
\int\limits_{|\xi|\le E^{-\frac{1}{\b}}\lu x \ru^{-\frac{\a}{\b}}}\, d\xi \, dx
\lesssim E^{-\frac{d}{\b}}\, \int\limits_{ |x|\le E^{-\frac{1}{\a}}} 
 \lu x \ru^{-\frac{ d\a}{\b}}\, dx.
\end{align}
If $\a < \b$, then the right-hand side does not exceed 
\begin{align*}
E^{-\frac{d}{\b}}\, \int\limits_{ |x|\le E^{-\frac{1}{\a}}} 
 \lu x \ru^{-\frac{ d\a}{\b}}\, dx
 \lesssim E^{-\frac{d}{\b}}\, E^{-\frac{1}{\a}\big(-\frac{d\a}{\b} + d\big)}
 = E^{-\frac{d}{\a}}, 
\end{align*}
which means that $N_{p, 0}[\lu x\ru^{-\a} \lu\xi\ru^{-\b}]<\infty$ with $p = d/\a$, see definition 
\eqref{eq:ncirc}. The inclusion 
$\op^{(t)}(a)\in \GS_{p, 0} = \BS_{p, \infty}$ is a consequence of Theorem \ref{thm:qcl}. 
If $\a = \b$, then the right-hand side of \eqref{eq:volest} 
equals 
\begin{align*}
E^{-\frac{d}{\b}}\, \int\limits_{ |x|\le E^{-\frac{1}{\a}}} 
 \lu x \ru^{-d}\, dx
\lesssim  E^{-\frac{d}{\b}}\, \log \big(E^{-1} + 2). 
\end{align*}
By definition \eqref{eq:ncirc} this means that 
$N_{p, \s}[\lu x\ru^{-\a} \lu\xi\ru^{-\b}]<\infty$. 
Similarly, $N_{p, \s}[\lu x\ru^{-\b} \lu\xi\ru^{-\a}]<\infty$, 
and hence, $N_{p, \s}(\rho) <\infty$, as claimed. 
The inclusion 
$\op^{(t)}(a)\in \GS_{p, \s}$ is a consequence of Theorem \ref{thm:qcl} again. 
\end{proof}
   
\section{From the Weyl to Kohn-Nirenberg symbol}\label{sect:kntow}


In this section we consider smooth $\plainL2$-symbols $a$ and demonstrate that,
modulo some mild assumptions, switching  
from the Weyl to Kohn-Nirenberg symbol does not affect the spectral asymptotics, 
 
Under the assumption $a\in \plainL{2}(\R^{2d})$ 
the operator $\op^{\rm w}(a) - \op^{\rm l}(a)$ is Hilbert-Schmidt, see 
Lemma \ref{lem:hs}.  
%
%
Let us estimate its Hilbert-Schmidt norm. Below $\plainW{2}{2}(\R^{2d})$ denotes the standard 
Sobolev space on $\R^{2d}$. 

\begin{lem}\label{lem:hsb}
Suppose that $a\in \plainW{2}{2}(\R^{2d})$. Then the operator 
$\op^{\rm w}(a) - \op^{\rm l}(a)$ is Hilbert-Schmidt and 
\begin{align*}
\|\op^{\rm w}(a) - \op^{\rm l}(a)\|_{\BS_2}^2\le \frac{1}{4(2\pi)^d} \iint\,
\big|((\nabla_x\cdot\nabla_\xi)\, a)(x,  \xi)\big|^2 \, dx d\xi.
\end{align*}
\end{lem}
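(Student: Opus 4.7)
The plan is to express the difference $\op^{\rm w}(a) - \op^{\rm l}(a) = \op^{(1/2)}(a) - \op^{(0)}(a)$ as an integral in $t$, then identify the $t$-derivative of $\op^{(t)}(a)$ with a pseudodifferential operator of a known symbol, and finally invoke the integrated Hilbert-Schmidt bound \eqref{eq:int} from Lemma \ref{lem:hs}.

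First, I would argue by density: since $\CS(\R^{2d})$ is dense in $\plainW{2}{2}(\R^{2d})$, and the map $a\mapsto \op^{(t)}(a)$ is continuous from $\plainL2(\R^{2d})$ to $\BS_2$ uniformly in $t\in [0,1]$ by \eqref{eq:hs}, while $a\mapsto (\nabla_x\cdot\nabla_\xi)a$ is continuous from $\plainW{2}{2}$ to $\plainL2$, it suffices to prove the identity and the estimate for $a\in \CS(\R^{2d})$ and then pass to the limit.

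So assume $a\in \CS(\R^{2d})$. Differentiating the symbol $a((1-t)x+ty,\xi)$ in $t$ yields $(y-x)\cdot(\nabla_x a)((1-t)x+ty,\xi)$. Using $(y-x) e^{i(x-y)\cdot\xi} = i \nabla_\xi e^{i(x-y)\cdot\xi}$ and integrating by parts in $\xi$ (all boundary terms vanish by Schwartz decay), the kernel of $\frac{d}{dt}\op^{(t)}(a)$ becomes
\begin{equation*}
\frac{d}{dt}K_t(x,y) = -\frac{i}{(2\pi)^d}\int e^{i(x-y)\cdot\xi}\,\big((\nabla_x\cdot\nabla_\xi) a\big)\big((1-t)x+ty,\xi\big)\,d\xi,
\end{equation*}
which identifies
\begin{equation*}
\frac{d}{dt}\op^{(t)}(a) = -i\,\op^{(t)}\bigl((\nabla_x\cdot\nabla_\xi)a\bigr).
\end{equation*}
Integrating from $0$ to $1/2$,
\begin{equation*}
\op^{\rm w}(a) - \op^{\rm l}(a) = -i \int_0^{1/2}\op^{(t)}\bigl((\nabla_x\cdot\nabla_\xi)a\bigr)\,dt,
\end{equation*}
which is precisely an operator of the form \eqref{eq:inta} (up to the irrelevant scalar $-i$) with symbol $(\nabla_x\cdot\nabla_\xi)a\in \plainL2$ and $s=1/2$.

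Applying the estimate \eqref{eq:int} to this representation immediately gives
\begin{equation*}
\|\op^{\rm w}(a) - \op^{\rm l}(a)\|_{\BS_2}^2 \le \frac{(1/2)^2}{(2\pi)^d}\int\big|(\nabla_x\cdot\nabla_\xi)a(\btau)\big|^2\, d\btau = \frac{1}{4(2\pi)^d}\iint\big|(\nabla_x\cdot\nabla_\xi)a(x,\xi)\big|^2\,dx\,d\xi,
\end{equation*}
as required. The only subtle point is justifying the integration by parts and differentiation under the integral for $a$ merely in $\plainW{2}{2}$, which is handled by the density argument above; no genuinely hard estimate is needed since the machinery of Lemma \ref{lem:hs} does all the work once the symbol of the $t$-derivative is identified.
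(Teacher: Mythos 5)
Your proposal is correct and follows essentially the same route as the paper: both arguments reduce to Schwartz symbols, use the fundamental theorem of calculus in the quantization parameter together with the integration by parts $(y-x)e^{i(x-y)\cdot\xi}=i\nabla_\xi e^{i(x-y)\cdot\xi}$ to arrive at the representation $\op^{\rm w}(a)-\op^{\rm l}(a)=-i\int_0^{1/2}\op^{(t)}\big((\nabla_x\cdot\nabla_\xi)a\big)\,dt$, and then apply the integrated Hilbert--Schmidt bound \eqref{eq:int} with $s=1/2$. The only cosmetic difference is that the paper phrases the first step as rewriting the amplitude $a(\tfrac{x+y}{2},\xi)-a(x,\xi)$ via Taylor's formula rather than differentiating the operator family $\op^{(t)}(a)$ in $t$.
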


\begin{proof} 
By Lemma \ref{lem:hs}, it suffices to conduct the proof for $a\in \CS(\R^{2d})$.  
Denote 
\begin{align}\label{eq:ampl}
p(x, y; \xi) = a\bigg(\frac{x+y}{2}, \xi\bigg) - a(x, \xi),
\end{align}
so that $\op^{\rm w}(a) - \op^{\rm l}(a) = \op^{\rm a}(p)$. 
Rewrite the amplitude:
\begin{align*}
p(x, y, \xi) = &\ \frac{1}{2}\int_{0}^1 \, 
(y-x)\cdot\nabla_x a\bigg(x+ s\frac{y-x}{2}, \xi\bigg)\, ds\\
 = &\ \int_{0}^{1/2} \, 
(y-x)\cdot\nabla_x a\big(x+ s(y-x), \xi\big)\, ds. 
\end{align*}
Thus, integrating by parts, we can represent the kernel $\CP(x, y)$ of the operator 
$\op^{\rm a}(p)$ as follows: 
\begin{align*}
\CP(x, y) = &\  - \frac{i}{(2\pi)^{d}}\, \int e^{i(x-y)\cdot\xi}\, \int_0^{1/2} (\nabla_x\cdot\nabla_\xi) 
a\big(x+ s(y-x),  \xi\big)\,ds \, d\xi. 
\end{align*}
This implies that 
\begin{align*}
\op^{\rm a}(p) = \int_0^{1/2} \ \op^{(s)}(w) ds,\quad w(x, \xi) =  
- i (\nabla_x\cdot\nabla_\xi) a(x, \xi).
\end{align*}
The required bound follows from \eqref{eq:int}.    
This completes the proof. 
\end{proof}

Now we can estimate singular values of the difference 
$\op^{\rm w}(a) - \op^{\rm l}(a)$. To this end 
we have to impose stronger conditions on $a$ than in Lemma \ref{lem:hsb}. 

%
%
Let $q\in (0, 1]$, and let 
the sequence ${\sf v} = \{v_{\bk}\}$ be defined by \eqref{eq:vk} 
with $n=n(q)$, see \eqref{eq:n} for definition.  
%
Apart from $v_\bk$ we  also use
\begin{align}\label{eq:w}
w_\bk = \bigg[\int_{\CC^{(2)}_\bk}\,
|(\nabla_x\cdot\nabla_\xi) a(\btau)|^2 \, 
d\btau\bigg]^{\frac{1}{2}},\quad \bk\in\Z^{2d}. 
\end{align}
Recall that $M_{\g, \s}[{\sf v}]$ and $M_{\g, \s}^\circ[{\sf v}]$ are defined in 
\eqref{eq:sup} and \eqref{eq:limsup} 
respectively.

\begin{thm}\label{thm:difference}
Suppose that for some 
$p \in (0, 2)$, some $q\in (0, 1]$, such that $q < p$, and some $\s\ge 0$,  we have  
$M_{p, \s}[\sf v] <\infty$ and $ M_{p, \s}^{\circ}[\sf w] = 0$. 
Then 
\begin{align}\label{eq:difference}
\SfG_{p, \s}\big( \op^{\rm w}(a) - \op^{\rm l}(a)\big) = 0.
\end{align}
\end{thm}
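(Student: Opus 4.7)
The plan is to adapt the proof of Theorem \ref{thm:limsup} to the difference $B := \op^{\rm w}(a) - \op^{\rm l}(a)$, with the integral representation from Lemma \ref{lem:hsb} as the crucial refinement. I write
\[
B = \int_0^{1/2}\op^{(s)}(w)\,ds, \qquad w = -i(\nabla_x\cdot\nabla_\xi)a,
\]
so that the integral form of the Hilbert--Schmidt bound in Proposition \ref{prop:vk} delivers, for any truncation by the partition of unity $\{\z_\bk\}$, an estimate controlled by the sequence $\{w_\bk\}$ rather than the cruder $\sv$ that would arise by treating $\op^{\rm w}(a)$ and $\op^{\rm l}(a)$ separately. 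This is the essential improvement that allows hypothesis $(\textup{ii})$ to enter the picture.

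For each integer $m$ with $|m|$ large, I set $K_m^{\pm} = \{\bk\in\Z^{2d}: w_\bk>2^{2m}\}$ and $K_m^- = \Z^{2d}\setminus K_m^+$, with cutoffs $\chi_m^{\pm} = \sum_{\bk\in K_m^{\pm}}\z_\bk$, and decompose $B = B_m^+ + B_m^-$ accordingly. The low-frequency part $B_m^- := \int_0^{1/2}\op^{(s)}(w\chi_m^-)\,ds$ satisfies
\[
\|B_m^-\|_{\BS_2}^2 \,\lesssim\, \sum_{\bk\in K_m^-} w_\bk^2,
\]
and the right-hand side decays at the correct rate as $m\to-\infty$ thanks to $M_{p,\s}^{\circ}[\{w_\bk\}]=0$. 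The high-frequency part, reinterpreted (up to a boundary remainder from the product rule) as $B_m^+ = \op^{\rm w}(a\chi_m^+) - \op^{\rm l}(a\chi_m^+)$, is estimated in $\BS_q$ by applying Proposition \ref{prop:vk} separately to each quantisation and invoking the $q$-subadditivity $\|A+B\|_{\BS_q}^q\le\|A\|_{\BS_q}^q + \|B\|_{\BS_q}^q$:
\[
\|B_m^+\|_{\BS_q}^q \,\lesssim\, \sum_{\bk\in K_m^+} v_\bk^q.
\]
Since $F_n(a)$ dominates $|(\nabla_x\cdot\nabla_\xi)a|$ for $n(q)\ge 2$, one has $K_m^+ \subset \{\bk : v_\bk > c\,2^{2m}\}$, so the right-hand side is controlled by the dyadic summation of the proof of Theorem \ref{thm:limsup} in terms of $M_{p,\s}[\sv]$.

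Combining these bounds via the Ky Fan inequality $s_{2k}(B)\le s_k(B_m^+)+s_k(B_m^-)$ with $s_k(\cdot)\le k^{-1/q}\|\cdot\|_{\BS_q}$ and $s_k(\cdot)\le k^{-1/2}\|\cdot\|_{\BS_2}$, balancing via $m=m(k)$ with $k^{1/p}|m|^{-\s}2^{2m}\asymp 1$ as in the concluding step of Theorem \ref{thm:limsup}, and taking $\limsup_{k\to\infty}$, one arrives schematically at
\[
s_k(B)\,k^{1/p}\log^{-\s}k \,\lesssim\, (T_m^{(v)})^{p/q} + (S_m^{(w)})^{p/2},
\]
with $S_m^{(w)}\to 0$ as $m\to-\infty$ by $(\textup{ii})$. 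The main obstacle is that the $\sv$-contribution $T_m^{(v)}$ is merely bounded by hypothesis $(\textup{i})$ rather than vanishing, so the strict equality $\SfG_{p,\s}(B)=0$ requires a two-scale refinement: I would introduce a second, finer threshold on $w_\bk$ and treat the ``large $v_\bk$, small $w_\bk$'' cubes via the Hilbert--Schmidt bound from Lemma \ref{lem:hsb} (which transmits the smallness from $(\textup{ii})$), isolating the few genuinely large-$w_\bk$ cubes as an exceptional finite-cardinality set handled through Proposition \ref{prop:triangle} and Corollary \ref{cor:pert}. The product-rule boundary remainder from truncating $a$ by $\{\z_\bk\}$ is absorbed by the overlap structure of the cubes $\CC_{\bk}^{(2)}$, exactly as in the proof of Proposition \ref{prop:vk}.
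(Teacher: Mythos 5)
Your decomposition is exactly the one the paper uses: threshold on $w_\bk$ at level $2^{2m}$, estimate the large-$w_\bk$ part in $\BS_q$ through $\sum v_\bk^q$ (using $w_\bk\lesssim v_\bk$), estimate the small-$w_\bk$ part in $\BS_2$ through $\sum w_\bk^2$ via Lemma \ref{lem:hsb}, and combine by Ky Fan with a $k$-dependent choice of $m$. You have also correctly located the sticking point: after balancing $k^{1/p}|m|^{-\s}2^{2m}\asymp 1$ one is left with $\limsup_k s_k(B)\,k^{1/p}\log^{-\s}k\lesssim T_m^{p/q}+S_m[{\sf w}]^{p/2}$, where $T_m$ tends to $M^{\circ}_{p,\s}[\sv]$ (normalised to $1$) and does not vanish. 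Up to here your argument coincides with the paper's; the gap is in how you close it, and the closing step is the whole point of the theorem.

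The paper's resolution is not a two-scale threshold but a one-line deformation of the balance point: choose $m=m(k)$ so that $k^{1/p}|m|^{-\s}2^{2m}\asymp 10^{r}$ for an arbitrary fixed $r>0$, with implied constants independent of $r$. The two terms then acquire prefactors $10^{r(1-p/q)}$ and $10^{r(1-p/2)}$ respectively. For each fixed $r$ the second term contributes nothing to the $\limsup$ because $S_m[{\sf w}]\to 0$ (this is where $M^{\circ}_{p,\s}[{\sf w}]=0$ enters), even though its prefactor grows with $r$ since $p<2$; the first term gives $\limsup\lesssim 10^{r(1-p/q)}$, and since $q<p$ the exponent $1-p/q$ is negative, so letting $r\to\infty$ forces the $\limsup$ to zero. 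Your proposed alternative does not work as stated: with a threshold independent of $k$ the exceptional set is indeed harmless by Corollary \ref{cor:pert}, but the remaining $\BS_2$ bound only yields $s_k=O(k^{-1/2})$, which is weaker than $k^{-1/p}\log^{\s}k$ precisely because $p<2$; and as soon as the threshold is tied to $k$ the ``exceptional'' set grows with $k$ and must again be estimated quantitatively in $\BS_q$, reproducing the very $T_m^{p/q}$ term you were trying to remove. A genuine interpolation between the counting functions of $\sv$ and ${\sf w}$ might be made to work, but it is not carried out in your sketch and is substantially heavier than the paper's device.
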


\begin{proof}  
If $M_{p, \s}^{\circ}[{\sf v}] = 0$, then by Theorem \ref{thm:limsup}, the functional 
$\SfG_{p, \s}$ equals zero for each of the operators $\op^{\rm w}(a)$ and $\op^{\rm l}(a)$ 
and hence \eqref{eq:difference} holds by \eqref{eq:supinf}. Thus further on we suppose that 
$M_{p, \s}^{\circ}[{\sf v}] \not = 0$, and therefore, as in the proof of Theorem \ref{thm:limsup}, 
we may assume that $M_{p, \s}^{\circ}[{\sf v}] = 1$. 

To a large extent, our proof follows that of Theorem \ref{thm:limsup}. In particular, we use 
the notation \eqref{eq:mn} and note that $S_n[{\sf v}]\to 1$ and $S_n[{\sf w}]\to 0$ 
as $n\to-\infty$. 
 
Fix an integer $m$ satisfying \eqref{eq:upperm} 
and split $\SP:=\op^{\rm a}(p)$ with the amplitude 
\eqref{eq:ampl} into two operators: 
\begin{align*}
\SP = &\ \SP^{(1)}_m + \SP^{(2)}_m,\\
\SP^{(1)}_m = &\ \sum_{\bk: w_\bk > 2^{2m}} \big(\op^{\rm w}(a_\bk) 
- \op^{\rm l}(a_\bk)\big),\\
\SP^{(1)}_m= &\ 
\sum_{\bk: w_\bk \le  2^{2m}} \big(\op^{\rm w}(a_\bk) 
- \op^{\rm l}(a_\bk)\big),
\end{align*}
where $a_\bk$ are as defined in Sect. \ref{subsect:phasesp}. 
Applying the same argument as in the proof of Theorem \ref{thm:limsup} to each 
of the two operators in $\SP^{(1)}_m$, we obtain the bound
\begin{align*}
\|\SP^{(1)}_m\|_{\BS_q}^q\lesssim \sum_{\bk: w_\bk > 2^{2m}}\, v_\bk^q 
\lesssim \sum_{\bk: v_\bk > 2^{2m}}\, v_\bk^q, 
\end{align*} 
where we have remembered that $w_\bk \le v_\bk$. Thus $\SP^{(1)}_m$ satisfies the bound 
\eqref{eq:a1}: 
\begin{align*}
\|\SP^{(1)}_m\|_{\BS_q}^q\lesssim |m|^{p\s}\, 2^{(q-p)m} \, T_m^p, 
\end{align*}
where $T_m = S_m[{\sf v}] + 2^{-\varepsilon|m|}$ with some $\varepsilon>0$. 
For the operator $\SP^{(2)}_m$ we use Lemma \ref{lem:hsb}:
\begin{align*}
\| \SP^{(2)}_m\|_{\BS_2}^2\lesssim &\ \sum_{\bk: w_\bk\le 2^{2m}}\, w_\bk^2
 = \sum_{l \le 2m-1} \, \sum_{\bk: 2^{l}<w_\bk\le 2^{l+1}} w_{\bk}^2.
\end{align*} 
Repeating the argument in the proof of Theorem \ref{thm:limsup}, we obtain 
the bound
 \begin{align*}
\| \SP^{(2)}_m\|_{\BS_2}^2 \lesssim  2^{2(2-p)m}\, |m|^\s\, S_m^p,\quad 
S_m = S_m[{\sf w}].
\end{align*}
As in the proof of Theorem 
\ref{thm:limsup}, this leads to the estimate
\begin{align*}
s_{2k}(\SP)\le s_{2k-1}(\SP)
\lesssim  k^{-\frac{1}{p}} |m|^{\s}
\bigg[
\big(k^{\frac{1}{p}}  
|m|^{-\s} 2^{2m}\big)^{1-\frac{p}{q}}\, T_m^{\frac{p}{q}}
+ \big(k^{\frac{1}{p}}  
|m|^{-\s} 2^{2m} \big)^{1-\frac{p}{2}}\, S_m^{\frac{p}{2}}
\bigg].
\end{align*}
For each $k\ge 1$ take $m$ such that 
$10^{-1+r}\le  k^{\frac{1}{p}} |m|^{-\s} 2^{2m}\le 10^{1+r}$ with some $r>0$. 
Therefore 
\begin{align*}
s_{2k}(\SP)\le s_{2k-1}(\SP)\lesssim k^{-\frac{1}{p}} |m|^{\s}\,
\big(
 10^{r(1-\frac{p}{q})}T_m^{\frac{p}{q}} 
+ 10^{r(1-\frac{p}{2})} S_m^{\frac{p}{2}}
\big),
\end{align*}
with a constant independent of $r$. 
Since $|m|\lesssim \log (k+2) + r$, this implies the estimate 
\begin{align*}
s_{k}(\SP) \lesssim  k^{-\frac{1}{p}}(\log (k+2) + r)^\s
\big(
 10^{r(1-\frac{p}{q})}T_m^{\frac{p}{q}} 
+ 10^{r(1-\frac{p}{2})} S_m^{\frac{p}{2}}\big).
\end{align*}
Taking $\limsup$ as $k\to\infty$ and remembering that 
$\lim T_m = 1$ and $\lim S_m = 0$ 
as $m\to - \infty$, we obtain that 
\begin{align*}
\limsup_{k\to\infty} 
s_{k}(\SP)\, k^{\frac{1}{p}}\frac{1}{\log^\s (k+2)}
\lesssim  10^{r(1-\frac{p}{q})}.
\end{align*}
Since $r>0$ is arbitrary and $p >q$, the left-hand side equals zero. 
This implies \eqref{eq:difference}.
\end{proof}

\section{Proof of Theorem \ref{thm:main}}\label{sect:proofs}

Employing the notation of Sect. \ref{sect:compact}, we can recast Theorem 
\ref{thm:main} as follows.

\begin{thm}\label{thm:restate} 
Let $\Om = \Om_\t\subset \R^2$ be a sector with the opening angle 
$\t\in (0, \pi)$ or $\t\in (\pi, 2\pi)$, and let  $\phi\in\plainC\infty_0(\R^2)$. 
Then 
\begin{align}\label{eq:mainf}
\SfG_{1,1}^{(\pm)}\big(\SA_{\Om}[\phi]\big) 
= \sg_{1,1}^{(\pm)}\big(\SA_{\Om}[\phi]\big) = \frac{|\phi(0, 0)|}{4\pi^2}.
\end{align}
If $\phi(0, 0) = 0$, then $\SA_{\Om}[\phi]\in \BS_{1, \infty}$. 
\end{thm}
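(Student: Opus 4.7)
The plan is to reduce the problem, via angle reduction, extraction of the leading dual symbol, and a Birman--Schwinger step, to an asymptotic formula from \cite{Sob_Sol_2002} for a one-dimensional Schrödinger operator. First I would reduce to $\t = \pi/2$. For $\t \in (0, \pi)$ this is immediate from the symplectic equivalence of Remark~\ref{rem:sympl}\eqref{item:sympl}. For $\t \in (\pi, 2\pi)$ one uses $\id_{\Om_\t} = 1 - \id_{\Om_{2\pi - \t}}$ (up to null sets), giving
\[
\SA_{\Om_\t}[\phi] = \op^{\rm w}(\phi) - \SA_{\Om_{2\pi - \t}}[\phi].
\]
Since $\phi \in \plainC\infty_0(\R^2)$, the term $\op^{\rm w}(\phi)$ is trace class, hence $\GS_{1,1}$-negligible, and the problem reduces to $2\pi - \t \in (0, \pi)$ with the roles of $\l^{(+)}$ and $\l^{(-)}$ exchanged.

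Next, by Theorem~\ref{thm:red}, $\SA_\Om[\phi] = \op^{\rm w}(\phi_\Om^*)\, U$, and Theorem~\ref{thm:ftrans} splits $\phi_\Om^* = b_0 + b_1$. Applying Corollary~\ref{cor:homo} to $b_1$ with $\a = 1, \b = 2$ yields $\op^{\rm w}(b_1) \in \BS_{1,\infty}$, so $\op^{\rm w}(b_1) U$ has singular values $O(1/k)$ and contributes zero to $\SfG_{1,1}^{(\pm)}$ by Corollary~\ref{cor:pert}. Since $b_0(-x,-\xi) = b_0(x,\xi)$, $\op^{\rm w}(b_0)$ commutes with $U$, so $\op^{\rm w}(b_0) U$ is self-adjoint and the problem is reduced to computing $\SfG_{1,1}^{(\pm)}(\op^{\rm w}(b_0) U)$ and $\sg_{1,1}^{(\pm)}(\op^{\rm w}(b_0) U)$. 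If $\phi(0,0) = 0$, then $b_0 \equiv 0$ and $\SA_\Om[\phi] \in \BS_{1,\infty}$ follows at once.

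To evaluate these functionals, I would decompose $\plainL2(\R) = \plainL2_{\rm e} \oplus \plainL2_{\rm o}$ into $\pm 1$-eigenspaces of $U$; on each, $\op^{\rm w}(b_0) U$ acts as $\pm \op^{\rm w}(b_0)$. Theorem~\ref{thm:difference} then permits replacing Weyl by Kohn--Nirenberg quantisation of $b_0$: the hypotheses $M_{1,1}[{\sf v}] < \infty$ and $M_{1,1}^\circ[{\sf w}] = 0$ follow from $|b_0| \lesssim \lu x\ru^{-1}\lu \xi\ru^{-1}$ (the hyperbolic region $\lu x\ru\lu\xi\ru < E^{-1}$ has area $\asymp E^{-1}\log E^{-1}$) and from $|\p_x \p_\xi b_0| \lesssim \lu x\ru^{-2}\lu\xi\ru^{-2}$ (area $\asymp E^{-1/2}\log E^{-1}$). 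The resulting model is the Kohn--Nirenberg operator $\frac{\phi(0,0)}{4\pi} M_{\z(x)/x} T_{\z(\xi)/\xi}$ on each parity subspace. The Birman--Schwinger principle then identifies its eigenvalue counting functions with those of a one-dimensional Schrödinger operator on $\R$ whose potential decays like $(\phi(0,0)/(4\pi))^2 \z(x)^2/x^2$ at infinity, and the asymptotic formula of \cite{Sob_Sol_2002} produces the constant $|\phi(0,0)|/(4\pi^2)$.

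The hard part will be this last step: carrying out the Birman--Schwinger reduction for the non-self-adjoint product $M_{\z/x} T_{\z/\xi}$, tracking signs on each parity component so that both $\SfG^{(+)}_{1,1}$ and $\SfG^{(-)}_{1,1}$ emerge with the same coefficient, and confirming that the Sobolev--Solomyak constant matches the target $1/(4\pi^2)$ after the explicit change of variables intrinsic to the problem.
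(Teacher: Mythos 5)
Your overall architecture (reduction to $\t=\pi/2$, passage to the dual symbol, the split $\phi_\Om^*=b_0+b_1$ with $b_1$ negligible, the Weyl-to-Kohn--Nirenberg switch, and the Birman--Schwinger reduction to the Schr\"odinger operator with potential $\asymp g/(1+x^2)$) coincides with the paper's, and the volume estimates you quote for $M_{1,1}[{\sf v}]$ and $M_{1,1}^\circ[{\sf w}]$ are correct. The genuine gap is in how you propose to separate the positive from the negative eigenvalues. Your parity decomposition is sound as far as it goes: with $\z$ chosen even, $b_0$ is invariant under $(x,\xi)\mapsto(-x,-\xi)$, so $\op^{\rm w}(b_0)$ commutes with $U$ and $\op^{\rm w}(b_0)U$ acts as $+\op^{\rm w}(b_0)$ on even functions and $-\op^{\rm w}(b_0)$ on odd ones. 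But this only converts the problem into computing $n_{\pm}$ of $\op^{\rm w}(b_0)$ \emph{restricted to each parity subspace}, and nothing in your sketch addresses that. The Birman--Schwinger factorization \eqref{eq:tl} produces the counting function of the non-negative operator $T(\l)=(\op^{\rm l}(P_0))^*\op^{\rm l}(P_0)$, i.e.\ the \emph{singular values} of the model operator; it says nothing about the signed eigenvalue counts of $\op^{\rm w}(b_0)$, whose symbol $\z(x)\z(\xi)/(x\xi)$ changes sign in the four quadrants. Indeed, the paper can only reach the positive symbol $\tilde b_0 = \z(x)\z(\xi)/(4\pi|x||\xi|)$ by multiplying $\op^{\rm l}(b_0)$ on left and right by $\operatorname{sgn}x$ and $\operatorname{sgn}\xi$ -- an operation that preserves singular values but destroys eigenvalue information, and which moreover does not respect your parity subspaces. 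So the step you flag as ``the hard part'' is not a technicality: as described, it cannot be carried out.

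The paper resolves this with an ingredient your proposal is missing: an \emph{asymptotic spectral symmetry} argument (Lemma \ref{lem:diag} and Proposition \ref{prop:sva}). One shows that the diagonal blocks $\id_{\R_\pm}\op^{\rm w}(\phi_\Om)\id_{\R_\pm}$ with respect to the position half-lines lie in $\BS_{1,\infty}$ (their dual symbols gain arbitrary decay in $x$ from the compact support of $\phi$, via Remark \ref{rem:dual} and Corollary \ref{cor:homo}), so that modulo $\GS_{1,1}$-negligible terms $\SA_\Om$ is purely off-diagonal; an off-diagonal block operator has spectrum symmetric about $0$, whence $n_\pm(s;\SA_\Om)\sim\tfrac12 n(s;\SA_\Om)$. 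This reduces \eqref{eq:mainf} entirely to the singular value asymptotics $\SfG_{1,1}(\SA_\Om)=\sg_{1,1}(\SA_\Om)=(2\pi^2)^{-1}$, which is exactly what the Birman--Schwinger computation delivers. You need either to import this argument or to supply an independent proof that $n_+$ and $n_-$ of $\op^{\rm w}(b_0)$ on each parity subspace have the same leading asymptotics; the latter is not easier than the former.
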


The proof consists of several steps detailed below.

\subsection{Reduction to $\t = \pi/2$}\label{subsect:reduction} 
The first step is to 
 show that it suffices to prove Theorem \ref{thm:restate} for $\t = \pi/2$ only. 
If $\t\in (0, \pi)$, then this conclusion follows from 
Remark \ref{rem:sympl}(\ref{item:sympl}).   
Now suppose that $\t\in (\pi, 2\pi)$. Rewrite:
\begin{align*}
\SA_{\Om_\t}[\phi] = \op^{\rm w}(\phi) - \SA_{\L_\t}[\phi],\quad \textup{with}\quad 
\L_\t = \{(x, \xi): \arg(x+i\xi) \in (\t, 2\pi)\}.
\end{align*}
Since $\phi\in\plainC\infty_0(\R^2)$, it satisfies \eqref{eq:homo} for any $\a>0, \b>0$, 
so $\op^{\rm w}(\phi)\in \BS_{p, \infty}$ for all $p >0$, 
and therefore, $\SfG_{1,1}\big(\op^{\rm w}(\phi)\big) = 0$. 
By Corollary \ref{cor:pert},
\begin{align*}
\SfG_{1,1}^{(\pm)}\big(\SA_{\Om_\t}[\phi]\big)= \SfG_{1,1}^{(\pm)}\big(-\SA_{\L_\t}[\phi]\big)
= \SfG_{1,1}^{(\mp)}\big(\SA_{\L_\t}[\phi]\big),
\end{align*} 
and the same identities hold for the functionals $\sg_{1, 1}^{(\pm)}$. 
The opening angle of the sector $\L_\t$ is $2\pi-\t\in (0, \pi)$, 
and hence Remarks \ref{rem:sympl}\eqref{item:rot} and \ref{rem:sympl}(\ref{item:sympl}) 
ensure that the formula \eqref{eq:mainf} for the sector 
$\Om_{\pi/2}$ implies the same formula for the sector $\L_\t$, and hence for $\Om_\t$.
 
A similar argument, based on Proposition \ref{prop:triangle} 
shows that the inclusion $\SA_\Om[\phi]\in\BS_{1, \infty}$ is equivalent to 
$\SA_{\pi/2}[\phi]\in\BS_{1, \infty}$.

In the rest of this section 
we focus on the proof of Theorem \ref{thm:restate} for the case where 
$\Om = \Om_{\pi/2}$.

\subsection{Proof of $\SA_\Om[\phi]\in\BS_{1, \infty}$ under the assumption 
$\phi(0, 0) = 0$}\label{subsect:straight}
In view of 
Theorem \ref{thm:red}, $s_k(\SA_\Om) = s_k(\op^{\rm w}(\phi_\Om^*)), k= 1, 2, \dots$, 
where $ \phi_\Om^*$ is the dual of $\phi_\Om$. 
Since $\phi(0,0) = 0$, by Theorem \ref{thm:ftrans}, $\phi_\Om^*(\btau) = b_1(\btau)$, 
and hence it satisfies \eqref{eq:homo} 
with $\a=1, \b = 2$. Thus Corollary \ref{cor:homo} implies that 
$\SA_\Om\in\GS_{1, 0} = \BS_{1, \infty}$.
The proof is complete. 
\qed

\subsection{Spectral symmetry} 
We begin the proof of \eqref{eq:mainf} with demonstrating that 
the spectrum of $\SA_\Om$ is asymptotically symmetric about $\l = 0$.
Denote $\id_\pm = \id_{\R_\pm}$. The same symbol will be used for the orthogonal projections 
in $\plainL2(\R)$ onto $\plainL2(\R_\pm)$. 

\begin{lem}\label{lem:diag} 
$\id_\pm \op^{{\rm w}}(\phi_\Om)\id_\pm\in \BS_{1, \infty}$.  
\end{lem}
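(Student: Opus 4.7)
The plan is to reduce to the smooth dual symbol and exploit the decay that emerges when the compressions become \emph{off-diagonal} in the dual picture. By Theorem~\ref{thm:red}, $\op^{\rm w}(\phi_\Om) = \op^{\rm w}(\phi_\Om^*) U$ with $U$ the reflection, and since $U \id_\pm = \id_\mp U$, one has $\id_\pm \op^{\rm w}(\phi_\Om) \id_\pm = \id_\pm \op^{\rm w}(\phi_\Om^*) \id_\mp \, U$. Unitarity of $U$ reduces the lemma to proving that the off-diagonal compressions $\id_\pm \op^{\rm w}(\phi_\Om^*) \id_\mp$ belong to $\BS_{1, \infty}$. Splitting $\phi_\Om^* = b_0 + b_1$ as in Theorem~\ref{thm:ftrans}, the contribution of $b_1$ is handled immediately: the bound \eqref{eq:b1diff} meets the hypothesis of Corollary~\ref{cor:homo} with $\a = 1, \b = 2$, yielding $\op^{\rm w}(b_1) \in \GS_{1, 0} = \BS_{1, \infty}$, and a compression only decreases singular values.

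The main task is therefore to handle $\id_+ \op^{\rm w}(b_0) \id_-$ (the operator $\id_- \op^{\rm w}(b_0) \id_+$ is analogous), where $b_0(X, \xi) = c\, h(X) h(\xi)$ with $h(t) = \z(t)/t$ and $c = \phi(0, 0)/(4\pi)$. The kernel of $\op^{\rm w}(b_0)$ factors as $c\, h((x+y)/2) \psi(x-y)$, where $\psi$ is (essentially) the inverse Fourier transform of $h$; because $h^{(k)} \in \plainL{1}(\R)$ for every $k \ge 1$, repeated integration by parts shows that $\psi$ is smooth and rapidly decaying on $\{t \ne 0\}$. Inserting the cutoff $\id_+(x)\id_-(y)$ and passing to the coordinates $X = (x+y)/2$, $t = x-y$, the condition $x>0$, $y<0$ becomes $t > 2|X|$, and the Weyl inversion formula produces the Weyl symbol
\begin{align*}
a(X, \xi) = c\, h(X) \int_{2|X|}^\infty e^{-it\xi} \psi(t) \, dt.
\end{align*}

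The remaining claim is that $a \in \CS(\R^2)$, after which Corollary~\ref{cor:homog} with any $\gamma > 1$ gives $\op^{\rm w}(a) \in \BS_{1, \infty}$, and Proposition~\ref{prop:triangle} combines this with the $b_1$ estimate to finish. Since $\z$, and hence $h$, is flat to infinite order at $|X| = 1$ and vanishes on $|X| < 1$, the symbol $a$ vanishes to all orders there; this absorbs the $\mathrm{sgn}(X)$-discontinuity that arises when differentiating the lower limit $2|X|$, leaving $a$ globally smooth. For $|X| \ge 1$ the integration is over $t \ge 2$ where $\psi$ is Schwartz, so a direct tail bound delivers rapid decay in $|X|$ and repeated integration by parts in $t$ delivers rapid decay in $|\xi|$; combining these gives $|\p_X^m \p_\xi^n a(X, \xi)| \lesssim \lu X\ru^{-N} \lu \xi \ru^{-M}$ for every $m, n, N, M$. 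The principal technical obstacle is precisely this smooth matching of $a$ across $|X| = 1$ together with controlling the boundary contribution generated by the moving lower limit $2|X|$; the flatness of $\z$ at the boundary of its support is exactly what saves the day.
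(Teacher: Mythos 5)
Your reduction to the off-diagonal compressions of the dual operator via $U\id_\pm=\id_\mp U$ is correct and elegant, and the $b_1$ term is disposed of properly. The gap is in the final claim that the Weyl symbol $a(X,\xi)=c\,h(X)\int_{2|X|}^\infty e^{-it\xi}\psi(t)\,dt$ of $\id_+\op^{\rm w}(b_0)\id_-$ satisfies $|\p_X^m\p_\xi^n a|\lesssim\lu X\ru^{-N}\lu\xi\ru^{-M}$ for all orders. It does not. Each $X$-derivative of the integral produces the boundary term $e^{-2i|X|\xi}\psi^{(j)}(2|X|)$, and differentiating the oscillatory exponential $e^{-2i|X|\xi}$ again in $X$ brings down a factor of $\xi$. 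Concretely, $\p_X^2 a$ contains the term $4i\,c\,h(X)\,\xi\,e^{-2i|X|\xi}\,\psi(2|X|)$, which for fixed $|X|\ge 2$ (where $h(X)=1/X$ and generically $\psi(2|X|)\ne 0$) grows linearly in $\xi$. So $a$ is smooth but is not Schwartz, the hypothesis $|\nabla^m_{\btau}a|\lesssim\lu\btau\ru^{-\g}$ of Corollary \ref{cor:homog} fails for $m\ge 2$, and Proposition \ref{prop:symbol} (which requires at least two derivatives since $n(q)\ge 2$ for $q<1$) cannot be fed this symbol either. The flatness of $\z$ at $|X|=1$ does fix the matching across $|X|=1$, but it is irrelevant to this $\xi$-growth, which occurs throughout the region $|X|\ge 1$ where $h\ne 0$. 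This is exactly the point where a sharp cut in the $t=x-y$ variable refuses to be a good Weyl symbol, and the argument as written does not go through.

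For comparison, the paper avoids ever quantizing a sharply cut-off kernel by exploiting the \emph{diagonal} structure directly: since $\phi$ is compactly supported in $x$, the symbol forces $(x+y)/2\in(-R,R)$, so $\id_+\op^{\rm w}(\phi_\Om)\id_+=\id_{(0,2R)}\op^{\rm w}(\phi_\Om)\id_{(0,2R)}$ and $x-y$ is \emph{bounded} on the support of the kernel. One may therefore insert a smooth compactly supported factor $\eta(x-y)$ into the amplitude without changing the operator; by Remark \ref{rem:dual} this dualizes to the symbol $\phi_\Om^*(x,\xi)\,\eta(-2x)$, which gains compact support in $x$ rather than an oscillatory cut, and Corollary \ref{cor:homo} with $\a=1$ and $\b$ arbitrary then gives membership in $\BS_{1,\infty}$. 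In your off-diagonal picture the constraint is $x-y\ge 2|X|$, which is an unbounded region with a moving boundary; to salvage your route you would need a different tool (e.g.\ a direct dyadic estimate of the kernel $\id_{\{t>2|X|\}}h(X)\psi(t)$, using that $\psi$ decays rapidly and $t\ge 2|X|\ge 2$ on the support), not Corollary \ref{cor:homog}.
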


\begin{proof} For definiteness consider the projection $\id_+$ only.  
Since $\phi$ is compactly supported, we have 
$\phi(x, \xi) = \id_{(-R, R)}(x)\, \phi(x, \xi)$ for some $R>0$, so we can rewrite the symbol of 
$\id_+ \op^{{\rm w}}(\id_\Om\phi) \id_+$ as 
\begin{align*}
\id_+(x)\, \id_{(-R, R)}\bigg(\frac{x+y}{2}\bigg)\, 
\id_+(y)\, \phi_\Om\bigg(\frac{x+y}{2}, \xi\bigg).
\end{align*}
This symbol equals zero outside the square 
\begin{align*}
\{(x, y): 0<x<2R,\, 0<y< 2R\}.
\end{align*} 
Therefore, 
\begin{align*}
\id_+ \op^{{\rm w}}(\phi_\Om) \id_+ = \id_{(0, 2R)}\, 
\op^{{\rm w}}(\phi_\Om) \, \id_{(0, 2R)}.
\end{align*}
Let $\eta\in\plainC\infty_0(\R)$ be a function such that $\eta(t) = 1$ for $t\in [-4R, 4R]$. Then  
$\id_{(0, 2R)}(x)\,\id_{(0, 2R)}(y) = \id_{(0, 2R)}(x)\,\id_{(0, 2R)}(y)\eta(x-y)$. 
This leads to the representation
\begin{align*}
\id_+ \op^{{\rm w}}(\phi_\Om) \id_+ = \id_+ \op^{{\rm a}}(p)\,\id_+,\quad 
p(x, y, \xi) = \phi_\Om\bigg(\frac{x+y}{2}\bigg)\, \eta(x-y). 
\end{align*}
Estimate singular values of $\op^{{\rm a}}(p)$. 
Let the unitary operator $U$ be as in Theorem \ref{thm:red}, i.e. $(Uf)(x) = f(-x)$. 
Since 
the multiplication by a unitary operator does not change the $s$-values, 
we consider $\op^{{\rm a}}(p) U$ instead. 
By Remark \ref{rem:dual}, this operator coincides with $\op^{\rm w}(b)$, where 
\begin{align*}
b(x, \xi) : = \phi_\Om^*(x, \xi) \, \eta(-2x),
\end{align*}
and $\phi_\Om^*$ is the dual symbol of $\phi_\Om$, as defined in \eqref{eq:dual}. 
Since $\eta\in\plainC\infty_0$, Theorem \ref{thm:ftrans} ensures that 
 the symbol $b$ satisfies the bound
\begin{align*}
\p_x^m \p_\xi^n \, b(x, \xi) \lesssim \lu x \ru^{-\b}\, \lu \xi\ru^{-1},\quad 
m, n = 0, 1, \dots,
\end{align*}
with an arbitrary $\b >0$. According to Corollary \ref{cor:homo}, 
$\op^{\rm w}(b)$ belongs to $\BS_{1, \infty}$, and hence so does 
$\id_+ \op^{{\rm w}}(\phi_\Om) \id_+$, 
as claimed.
\end{proof}

The above lemma entails the following asymptotic symmetry. Let $\SfG, \SfG^{\pm}$, and  
$\sg, \sg^{\pm}$ be as defined in Sect. \ref{sect:compact}.

\begin{prop}\label{prop:sva}
Let $\Om = \Om_{\pi/2}$. Then 
\begin{align*}
\SfG_{1, 1}^+(\SA_\Om) = \SfG_{1, 1}^-(\SA_\Om) = \frac{1}{2}\SfG_{1, 1}(\SA_\Om),\quad 
\sg_{1, 1}^+(\SA_\Om) = \sg_{1, 1}^-(\SA_\Om) = \frac{1}{2} \sg_{1, 1}(\SA_\Om). 
\end{align*}

\end{prop}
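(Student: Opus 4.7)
The plan is to decompose $\SA_\Om$ with respect to the orthogonal splitting $\plainL2(\R) = \plainL2(\R_+)\oplus\plainL2(\R_-)$ into diagonal and off-diagonal parts,
\begin{align*}
\SA_\Om = D + T,\quad D := \id_+\SA_\Om\id_+ + \id_-\SA_\Om\id_-,\quad T := \id_+\SA_\Om\id_- + \id_-\SA_\Om\id_+,
\end{align*}
and to reduce the problem to $T$. By Lemma \ref{lem:diag} both summands in $D$ lie in $\BS_{1,\infty}$, so $s_k(D)\lesssim k^{-1}$, and hence $s_k(D)\, f_1(k)\lesssim 1/\log(k+e^2)\to 0$, i.e.\ $\SfG_{1,1}(D) = 0$. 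Corollary \ref{cor:pert} then lets me replace $\SA_\Om$ by $T$ in each of the four functionals $\SfG_{1,1}$, $\SfG_{1,1}^{(\pm)}$, $\sg_{1,1}$, $\sg_{1,1}^{(\pm)}$.

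The central structural observation will be that, in block form on $\plainL2(\R_+)\oplus\plainL2(\R_-)$, the self-adjoint operator $T$ reads
\begin{align*}
T = \begin{pmatrix} 0 & B \\ B^* & 0 \end{pmatrix},\qquad B := \id_+\SA_\Om\id_-\colon \plainL2(\R_-)\to\plainL2(\R_+).
\end{align*}
A standard computation (via $T^2 = \mathrm{diag}(BB^*, B^*B)$, or by solving the eigenvalue equation directly) shows that the nonzero eigenvalues of $T$ are $\{\pm s_k(B)\}$, with each singular value of $B$ contributing exactly one positive and one negative eigenvalue of $T$ with the same multiplicity. Consequently
\begin{align*}
n_+(\lambda;T) = n_-(\lambda;T) = n(\lambda;B),\qquad n(\lambda;T) = 2\, n(\lambda;B),\qquad \lambda > 0,
\end{align*}
which already delivers the ``$(+)=(-)$'' halves of the proposition at the level of the asymptotic functionals.

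Finally, to pull out the factor $1/2$ I will compare the arguments of $f_1$. Using the equivalent form $f_1(t) = t/\log(t+a)$ permitted by the remark following \eqref{eq:limsupinf}, the elementary asymptotic $f_1(2N)/f_1(N)\to 2$ as $N\to\infty$ gives, provided $n(\lambda;B)\to\infty$ as $\lambda\downarrow 0$ (the alternative being that all four functionals vanish trivially),
\begin{align*}
\lambda\, f_1\!\bigl(n(\lambda;T)\bigr) = \bigl(2+o(1)\bigr)\,\lambda\, f_1\!\bigl(n(\lambda;B)\bigr)\qquad \text{as}\ \lambda\downarrow 0,
\end{align*}
and passing to $\limsup$ and $\liminf$ yields $\SfG_{1,1}(T) = 2\SfG_{1,1}^{(+)}(T)$ and $\sg_{1,1}(T) = 2\sg_{1,1}^{(+)}(T)$, completing the proof after the replacement $T\rightsquigarrow\SA_\Om$ from the first paragraph. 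The main technical point is this last conversion of a pointwise asymptotic ratio into a $\limsup/\liminf$ identity; but since the ratio $f_1(2n)/f_1(n)$ tends to $2$ along \emph{every} sequence $n\to\infty$, a routine $2\pm\varepsilon$ sandwich does the job.
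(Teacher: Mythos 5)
Your proposal is correct and follows essentially the same route as the paper: the same diagonal/off-diagonal splitting, Lemma \ref{lem:diag} plus Corollary \ref{cor:pert} to discard the diagonal part, and the spectral symmetry of the block-antidiagonal remainder (the paper establishes $n_+ = n_-$ via the conjugation $W^*TW=-T$ with $W=\mathrm{diag}(\id_+,-\id_-)$ rather than via $T^2=\mathrm{diag}(BB^*,B^*B)$, but these are equivalent). Your explicit justification of the factor $1/2$ through $f_1(2N)/f_1(N)\to 2$ is a point the paper leaves implicit, and is correctly handled.
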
 

\begin{proof} 
Represent $\SA_\Om$ in the block-matrix form:
\begin{gather*}
\SA_\Om =  \SA_\Om^{(1)} + \SA_\Om^{(2)},\\
\SA_\Om^{(1)}
= \begin{pmatrix}
0&\id_+\SA_\Om \id_-\\
\id_-\SA_\Om \id_+& 0
\end{pmatrix},\quad 
\SA_\Om^{(2)}
= \begin{pmatrix}
\id_+\SA_\Om \id_+&0\\
0& \id_-\SA_\Om \id_-
\end{pmatrix}.
\end{gather*}
By Lemma \ref{lem:diag}, $\SA_\Om^{(2)}\in \BS_{1, \infty}$, so that 
$\SfG_{1, 1}(\SA_\Om^{(2)}) = 0$. 

As for the operator $\SA_\Om^{(1)}$, its spectrum is symmetric, i.e. 
$n_+(\l; \SA_\Om^{(1)}) = n_-(\l; \SA_\Om^{(1)})$. 
Indeed, 
\begin{align*}
W^* \SA_\Om^{(1)} W = - \SA_\Om^{(1)},\ \quad \textup{with the unitary}\quad 
W = \begin{pmatrix}
\id_+ &0\\0&-\id_-
\end{pmatrix}. 
\end{align*}
Thus 
$n(s; \SA_\Om^{(1)}) = 2n_\pm(s;, \SA_\Om^{(1)})$ for all $s >0$, and therefore, 
\begin{align*}
\SfG^\pm_{1, 1}(\SA_\Om^{(1)}) = \frac{1}{2} \SfG_{1,1}(\SA_\Om^{(1)}),\ 
\sg^\pm_{1, 1}(\SA_\Om^{(1)}) = \frac{1}{2} \sg_{1,1}(\SA_\Om^{(1)}).
\end{align*} 
Since $\SfG_{1, 1}(\SA_\Om^{(2)}) = 0$, by Corollary \ref{cor:pert}, in the equalities above 
the operator $\SA_\Om^{(1)}$ can be replaced with $\SA_\Om$ which completes the proof. 
\end{proof}

\subsection{The Schr\"odinger operator}
At the last stage of the proof we link the counting function 
for the operator $\SA_\Om$ with that of some Schr\"odinger operator on $\plainL2(\R)$. In the subsection 
we provide the necessary facts about discrete spectrum of the appropriate Schr\"odinger operator.  

Let $V=V(x), x\in\R$, be a bounded  real-valued function such that 
$V(x)\to 0$ as $|x|\to\infty$. 
Let $H$ be the self-adjoint differential operator associated with the closed quadratic form 
\begin{align}\label{eq:hform}
\int_{\R}\, \big(|u'(x)|^2 - V(x) |u(x)|^2\big)  \, dx,\quad u\in \plainW{1}{2}(\R).
\end{align}
On its domain, the operator $H$ is given by the differential expression 
\begin{align*}
H u = - u'' - Vu,
\end{align*}
i.e. it is the Schr\"odinger operator with a potential $-V$. We write $H_0$ if $V = 0$. 
The operator $H$ can have only discrete spectrum below $\l = 0$. 
%
%
Let $\#(\l, V), \l >0,$ be the number of eigenvalues below $-\l <0$. 
Along with the operator $V$ consider the 
compact operator 
\begin{align*}
T(\l) = (H_0+\l)^{-1/2}\, V\, (H_0+\l)^{-1/2},\ \l>0.
\end{align*}
The next identity, known in the literatrure as the Birman-Schwinger principle, 
relates the counting functions for $H$ and for $T(\l)$:
\begin{align}\label{eq:bsch}
\#(\l, V) = n_+(1, T(\l)).
\end{align}
If $V\ge 0$, then the operator $T(\l)$ can be factorized in a natural way:
\begin{align}\label{eq:tl}
T(\l) = \big(\op^{\rm l}(P_0)\big)^*\, \op^{\rm l}(P_0),\quad \textup{where}\quad 
P_0(x, \xi) = \sqrt{V(x)}\, \frac{1}{\sqrt{\xi^2+\l}}.
\end{align}
Thus the identity \eqref{eq:bsch} can be recast as follows
\begin{align}\label{eq:bsch1}
\#(\l, V) = n(1, \op^{\rm l}(P_0)).
\end{align}
We shall use this equality to obtain spectral information on $\op^{\rm l}(P_0)$ from 
spectral properties of the Schr\"odinger operator. To this end we 
define Schr\"odinger operators on 
sub-intervals of the real line. 

For $R>0$ let $H^{(<)}_R$, $H^{(\circ)}_R$, $H^{(>)}_R$ be the self-adjoint operators 
associated with the 
quadratic form \eqref{eq:hform} on the domains $\plainW{1}{2}_0(-\infty, -R)$, 
$\plainW{1}{2}_0(-R, R)$ 
and $\plainW{1}{2}_0(R, \infty)$ respectively. In other words, 
the above operators are Schr\"odinger operators on $(-\infty, -R)$, $(-R, R)$ and $(R, \infty)$ 
with the Dirichlet condition at $x = -R$ and $x = R$. 
Denote by $\#^{(<)}_R(\l, V)$, $\#^{(\circ)}_R(\l, V)$ and $\#^{(>)}_R(\l, V)$ the number of eigenvalues of each operator below $-\l<0$. 
Recall the well-known \textit{decoupling principle}:
\begin{align}\label{eq:decoup}
\big|\#(\l, V) - &\ \big[
\#^{(<)}_R(\l, V) + \#^{(\circ)}_R(\l, V) + \#^{(>)}_R(\l, V)
\big]\big|\le 2.
\end{align}
This inequality is based on standard facts of theory of extensions for symmetric operators,
see e.g. \cite[Ch. 8]{AkhGlaz1993} and \cite[Ch. 9.3]{BS}. 
We need 
the following formula for $\#^{(\circ)}_R(\l; V)$, 
see \cite[Theorem 6.4]{Sob_Sol_2002} and also \cite[Lemma 5.2]{Sobolev1991}:

\begin{prop}
Suppose $V\in \plainC1(\R)$ is a non-negative 
function, and that $0<R <\infty$. Then for any $\l\in\R$ we have 
\begin{align}\label{eq:prufer}
\bigg| \#^{(\circ)}_R(\l, V) - &\ \frac{1}{\pi}\int_{-R}^R\, \sqrt{V(t)}\, dt\bigg|\notag\\
&\ \qquad\qquad\le \int_{-R}^R\, \frac{1}{4\pi}\frac{|V'(t)|}{(V(t)+|\l|)}\, dt 
+ \frac{6 R \sqrt{|\l|+1}}{\pi} + 1.
\end{align}
\end{prop}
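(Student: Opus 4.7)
The plan is to combine Sturm's oscillation theorem with a modified Pr\"ufer transformation whose amplitude is chosen to match the form of the claimed remainder. By Sturm oscillation, $\#^{(\circ)}_R(\lambda, V)$ equals the number of zeros in the open interval $(-R, R)$ of the solution $\phi$ to the Cauchy problem
\begin{equation*}
\phi'' + (V - \lambda)\phi = 0,\qquad \phi(-R) = 0,\ \phi'(-R) = 1,
\end{equation*}
so the problem reduces to estimating this zero count in terms of $\int_{-R}^R\sqrt{V}\,dx$.

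Next I would introduce the Pr\"ufer angle $\theta(x)$ via $\tan\theta(x) = \omega(x)\phi(x)/\phi'(x)$, with amplitude $\omega(x) := \sqrt{V(x) + |\lambda| + 1}$ (positive and $C^1$ since $V\in\plainC1$), and initial value $\theta(-R) = 0$. A direct computation using the equation for $\phi$ gives
\begin{equation*}
\theta' = \omega + \frac{(V - \lambda) - \omega^{2}}{\omega}\sin^{2}\theta + \frac{\omega'}{2\omega}\sin 2\theta.
\end{equation*}
At each point where $\phi$ vanishes one has $\sin\theta = 0$, hence $\theta' = \omega > 0$, so $\theta$ crosses every integer multiple of $\pi$ upward and cannot return. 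This monotone crossing property yields $|\#^{(\circ)}_R(\lambda, V) - \theta(R)/\pi| \le 1$.

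Integrating the Pr\"ufer equation from $-R$ to $R$ gives the decomposition $\theta(R) = \int_{-R}^R \omega\,dx + I_1 + I_2$. The key identity $(V - \lambda) - \omega^2 = -\lambda - |\lambda| - 1$ shows $|(V-\lambda)-\omega^2|\le 2|\lambda|+1$, and the elementary inequality $(2|\lambda|+1)/\sqrt{|\lambda|+1}\le 2\sqrt{|\lambda|+1}$ then yields $|I_1|\le 4R\sqrt{|\lambda|+1}$. For the derivative term, $|\omega'/(2\omega)| = |V'|/(4(V+|\lambda|+1))\le |V'|/(4(V+|\lambda|))$, so $|I_2|\le\int_{-R}^R |V'|/(4(V+|\lambda|))\,dx$. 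Finally, the bound $\omega-\sqrt V=(|\lambda|+1)/(\omega+\sqrt V)\le\sqrt{|\lambda|+1}$ gives $|\int_{-R}^R(\omega-\sqrt V)\,dx|\le 2R\sqrt{|\lambda|+1}$. Summing the remainders $4R + 2R = 6R$ (multiplied by $\sqrt{|\lambda|+1}$), dividing by $\pi$, and adding the rounding error $\le 1$ from the previous step produces exactly the claimed inequality.

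The main obstacle is the choice of amplitude $\omega$ so that all three errors have precisely the required form. The extra constant in $\omega^2 = V + |\lambda| + 1$ is crucial: the summand $|\lambda|$ aligns the logarithmic-derivative term $\omega'/\omega$ with the denominator $V + |\lambda|$ prescribed in the statement, while the summand $1$ keeps $\omega$ bounded below away from zero and balances the $I_1$ estimate so that the coefficient $6$, rather than a larger constant, emerges. Alternative amplitudes such as $\sqrt{V+1}$ or $\sqrt{V-\lambda}$ (in the oscillatory region only) either fail to control the nonoscillatory contribution uniformly in $\lambda$, or produce the wrong denominator in the derivative term.
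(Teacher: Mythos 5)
Your argument is correct, and it is essentially the proof behind the sources the paper cites for this statement (\cite[Theorem 6.4]{Sob_Sol_2002}, \cite[Lemma 5.2]{Sobolev1991}): Sturm oscillation plus a modified Pr\"ufer transformation with amplitude $\omega=\sqrt{V+|\lambda|+1}$, which is exactly what produces the denominator $V+|\lambda|$ and the constant $6$. The computations (the $\theta'$ equation, the bounds $|I_1|\le 4R\sqrt{|\lambda|+1}$, $|I_2|\le\int|V'|/(4(V+|\lambda|))$, and $\int(\omega-\sqrt V)\le 2R\sqrt{|\lambda|+1}$) all check out.
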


We shall apply the formula \eqref{eq:prufer} with a very specific choice of the potential $V$:

\begin{cor}\label{cor:crit}
Let 
\begin{align*}
V(x) = \frac{1}{1+ x^2}. 
\end{align*}
Then for all $g\ge 1$, we have 
\begin{align}\label{eq:crit}
|\#(1, g V) - \frac{\sqrt g}{\pi} \log g |\lesssim  \sqrt g,
\end{align}
with a constant independent of $g$. 
\end{cor}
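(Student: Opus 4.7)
The plan is to combine the decoupling principle \eqref{eq:decoup} with the Prüfer-type formula \eqref{eq:prufer}, taking $R = \sqrt g$ as the splitting scale.

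First I would dispose of the exterior pieces. For $|x| > R = \sqrt g$ one has $gV(x) = g/(1+x^2) \leq g/(1+g) < 1$, so the form \eqref{eq:hform} with potential $gV$ restricted to the half-line $(R,\infty)$ is bounded below by $-g/(1+g) > -1$. Consequently $H^{(>)}_R$ has no spectrum strictly below $-1$, so $\#^{(>)}_R(1, gV) = 0$, and analogously $\#^{(<)}_R(1, gV) = 0$. The decoupling inequality \eqref{eq:decoup} then reduces the problem to $|\#(1, gV) - \#^{(\circ)}_R(1, gV)| \leq 2$.

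Next I would apply \eqref{eq:prufer} to $\#^{(\circ)}_R(1, gV)$. The leading term evaluates to
\[
\frac{1}{\pi}\int_{-R}^R \sqrt{gV(t)}\, dt = \frac{2\sqrt g}{\pi}\log\bigl(R + \sqrt{1+R^2}\bigr) = \frac{\sqrt g}{\pi}\log g + O(\sqrt g)
\]
for $R = \sqrt g$ and $g \geq 1$, since $\log(\sqrt g + \sqrt{1+g}) = \tfrac12 \log g + O(1)$ uniformly. For the derivative term, $V'(t) = -2t/(1+t^2)^2$, and the partial-fraction identity $1/(s(g+s)) = g^{-1}(s^{-1} - (g+s)^{-1})$ after the substitution $s = 1+t^2$ yields
\[
\int_{-R}^R \frac{g|V'(t)|}{gV(t) + 1}\, dt \leq \int_{\R} \frac{g|V'(t)|}{gV(t) + 1}\, dt = 2\log(g+1),
\]
so this contribution is $O(\log g)$ after division by $4\pi$. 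The boundary term $6R\sqrt{2}/\pi$ is $O(\sqrt g)$, and the additive constants are harmless. Summing these errors with the $+2$ from decoupling gives $|\#(1, gV) - (\sqrt g/\pi)\log g| \lesssim \sqrt g$, as claimed.

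There is no substantive obstacle; the argument is essentially a routine calculation. The only design decision is the choice $R = \sqrt g$, which is the smallest scale on which $gV$ falls below the threshold $1$ (killing the exterior eigenvalue counts), while simultaneously keeping the boundary error $O(R) = O(\sqrt g)$ balanced against — and absorbed into — the main term $(\sqrt g/\pi)\log g$.
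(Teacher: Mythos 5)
Your proposal is correct and follows essentially the same route as the paper: decoupling at $R=\sqrt g$, vanishing of the exterior counts since $gV<1$ there, and the Pr\"ufer formula \eqref{eq:prufer} for the middle interval, with all error terms $O(\sqrt g)$. The only cosmetic difference is that you evaluate the derivative integral exactly via partial fractions (getting $2\log(g+1)$) where the paper simply bounds $g|V'|/(gV+1)\le |V'|/V=2|t|/(1+t^2)$; both yield $O(\log g)$.
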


\begin{proof}
We use the decoupling principle \eqref{eq:decoup} with $R = \sqrt g$. Then $g V(x) < 1$ for 
$|x| >R$, so that 
\begin{align*}
\int_{-\infty}^{-R} \big(|u'(x)|^2 + (1 - gV(x))|u(x)|^2\big) dx >0, 
\quad \textup{for all}\quad u\in \plainW{1}{2}_0(-\infty, -R),
\end{align*}
and the same holds for the interval $(R, \infty)$. Therefore,  
\begin{align}\label{eq:out}
\#^{(<)}_R(1, gV) = \#^{(>)}_R(1, gV) = 0.
\end{align}
To estimate $\#^{(\circ)}_R(1, gV)$ we use \eqref{eq:prufer}. 
Find the integral on the left-hand side of \eqref{eq:prufer}:
\begin{align*}
\int_{-\sqrt g}^{\sqrt g} \frac{1}{\sqrt{1+x^2}}\, dx
= \log g + O(1). 
\end{align*}
The  right-hand side of \eqref{eq:prufer} does not exceed
\begin{align*}
\int_{|x|< \sqrt g}\, \frac{|x|}{1+x^2}\, dx 
+ \sqrt g + 1 
\lesssim \log g + \sqrt g + 1. 
\end{align*}
Together with \eqref{eq:out}, the formulas \eqref{eq:decoup} and \eqref{eq:prufer} entail \eqref{eq:crit}. 
\end{proof}

\subsection{Proof of formula \eqref{eq:mainf}}\label{subsect:proofs} 
Recall that $\t = \pi/2$. Without loss of generality we assume that $\phi(0, 0) = 1$.
By virtue of Proposition \ref{prop:sva}, the equalities \eqref{eq:mainf} will follow if we prove that 
\begin{align}\label{eq:gs}
\SfG_{1,1}(\SA_\Om) = \sg_{1,1}(\SA_\Om) = \frac{1}{2\pi^2}.
\end{align}
To avoid writing too many formulas, we discuss only $\SfG_{1,1}$. 
The functional $\sg_{1,1}$ can be 
considered in the same way. Theorem \ref{thm:ftrans} implies that 
$\SfG_{1,1}(\SA_\Om) = \SfG_{1, 1}(\op^{\rm w}(b))$ for the dual symbol $b=\phi_\Om^*$ defined 
in \eqref{eq:dual}. 
%
%
In the representation \eqref{eq:b0b1} the symbol $b_1$ satisfies 
\eqref{eq:b1diff}, and hence, by virtue of 
Corollary \ref{cor:homo}, $\SfG_{1,1}\big(\op^{\rm w}(b_1)\big) = 0$. Thus it follows from 
Corollary \ref{cor:pert} that 
\begin{align*}
\SfG_{1,1}\big(
\op^{\rm w}(b)\big) = \SfG_{1,1}\big(\op^{\rm w}(b_0)\big),\quad b_0(x, \xi) = 
\frac{1}{4\pi}  \,
\frac{\z(x)}{x}\, \frac{\z(\xi)}{\xi}.
\end{align*}
Since $\p_x\p_\xi b_0(x, \xi) = 0(\lu x\ru^{-2} \lu\xi\ru^{-2})$ and 
$N_{1,1}^\circ[\lu x\ru^{-2} \lu\xi\ru^{-2}] = 0$, we conclude that the sequence 
$\sf w$ defined by \eqref{eq:w} with $a = b_0$, satisfies $M_{1,1}^\circ[{\sf w}] = 0$. 
As a result, Theorem \ref{thm:difference} and Corollary 
\ref{cor:pert} 
guarantee that 
$\SfG_{1,1}\big(\op^{\rm w}(b_0)\big) = \SfG_{1,1}\big(\op^{\rm l}(b_0)\big)$.
The singular values remain the same if we mutliply 
$\op^{\rm l}(b_0)$  
from the right by ${\rm sgn}\, \xi$, and from the left by ${\rm sgn}\, x$, 
which leads to the equality
\begin{align*}
\SfG_{1,1}\big(\op^{\rm l}(b_0)\big) = \SfG_{1,1}\big(\op^{\rm l}(\tilde b_0)\big)
\quad \textup{with}\quad 
\tilde b_0(x, \xi) = \frac{1}{4\pi} \frac{\z(x)\z(\xi)}{|x|\, |\xi|}. 
\end{align*}
By Corollaries \ref{cor:homo} and \ref{cor:pert} again,
\begin{align*}
\SfG_{1,1}\big(\op^{\rm l}(\tilde b_0)\big) 
= \SfG_{1,1}\big(\op^{\rm l}(p_0)\big),\quad 
p_0(x, \xi) = \frac{1}{4\pi} \frac{1}{\lu x\ru\, \lu\xi\ru}.
\end{align*}
To find the functional $\SfG_{1,1}\big(\op^{\rm l}(p_0)\big)$ write
\begin{align*}
n(s; \op^{\rm l}(p_0)) = n(1; \op^{\rm l}(P_0)), 
\end{align*}
where $P_0(x, \xi)$ is the symbol defined by \eqref{eq:tl} with $\l=1$ and 
the potential $s^{-2} V(x)>0$, 
where
\begin{align*}
V(x) = \frac{1}{16\pi^2}\,\frac{1}{1+x^2}.
\end{align*}
By \eqref{eq:bsch1} and \eqref{eq:crit}, 
\begin{align*}
n(s; \op^{\rm l}(p_0)) = \#(1; s^{-2} V)
= \frac{1}{2\pi^2 s} \log \frac{1}{s} + O\bigg(\frac{1}{s}\bigg).
\end{align*}
Substituting this formula in definition \eqref{eq:supinf}, we obtain that 
$\SfG_{1,1}(\op^{\rm l}(p_0)) = (2\pi^2)^{-1}$. Consequently,  
$\SfG_{1,1}(\SA_\Om) = (2\pi^2)^{-1}$ as well. The same argument shows that 
$\sg_{1,1}(\SA_\Om) = (2\pi^2)^{-1}$. This proves \eqref{eq:gs} and hence completes the proof 
of the formula \eqref{eq:mainf}. 
\qed

One should remark that the spectral asymptotics of $\op^{\rm w}(b_0)$ 
could be also derived from \cite[P. 95]{Dauge_Rob1987}, see also 
\cite{Dauge_Rob1986}. In these papers the authors obtain spectral 
asymptotics for a wide class of Weyl-quantized pseudodifferential operators of negative order. 
Given the specific form of the symbol $b_0$, we chose to use a less general 
but more straightforward approach based on the Birman-Schwinger principle.

Sect.  \ref{subsect:straight} together with Sect. \ref{subsect:proofs} 
complete the proof of Theorem \ref{thm:restate},  and hence that of Theorem 
\ref{thm:main}. 
 
\vskip 0.3cm

\textbf{Acknowledgements.} 
The authors are grateful to F. Marceca and G. Rozenblum for bringing to 
their attention references \cite{BonKar2015} and \cite{Dauge_Rob1987}  respectively.


\begin{thebibliography}{10}
\providecommand{\url}[1]{\texttt{#1}}
\providecommand{\urlprefix}{URL }
\providecommand{\eprint}[2][]{\url{#2}}

\bibitem{AkhGlaz1993}
N.~I. Akhiezer and I.~M. Glazman, \emph{Theory of linear operators in {H}ilbert
  space}. Dover Publications, Inc., New York, 1993. Translated from the Russian
  and with a preface by Merlynd Nestell, Reprint of the 1961 and 1963
  translations, Two volumes bound as one.

\bibitem{BS}
M.~S. Birman and M.~Z. Solomyak, \emph{Spectral Theory of Selfadjoint Operators
  in Hilbert Space}. Mathematics and its Applications (Soviet Series), D.
  Reidel, 1987. Translated from the 1980 Russian original by S. Khrushch{\"e}v
  and V. Peller.

\bibitem{BonKar2015}
A.~Bonami and A.~Karoui, \emph{Some further estimates of the prolate spheroidal
  wave functions and their spectrum}. Adv. Pure Appl. Math. \textbf{6(2)}:
  81--95, 2015.

\bibitem{Cwikel1977}
M.~Cwikel, \emph{Weak type estimates for singular values and the number of
  bound states of {S}chr\"odinger operators}. Ann. of Math. (2)
  \textbf{106(1)}: 93--100, 1977.

\bibitem{Dauge_Rob1986}
M.~Dauge and D.~Robert, \emph{Formule de {W}eyl pour une classe d'op\'erateurs
  pseudodiff\'erentiels d'ordre n\'egatif sur {$L^2({\bf R}^n)$}}. C. R. Acad.
  Sci. Paris S\'er. I Math. \textbf{302(5)}: 175--178, 1986.

\bibitem{Dauge_Rob1987}
M.~Dauge and D.~Robert, \emph{Weyl's formula for a class of pseudodifferential
  operators with negative order on {$L^2({\bf R}^n)$}}.
  \emph{Pseudodifferential operators ({O}berwolfach, 1986)}, \emph{Lecture
  Notes in Math.}, vol. 1256, 91--122, Springer, Berlin, 1987.

\bibitem{DeDuLe2020}
B.~Delourme, T.~Duyckaerts, and N.~Lerner, \emph{On Integrals Over a Convex Set
  of the Wigner Distribution}. Journal of Fourier Analysis and Applications
  \textbf{26(1)}: 6, 2020.

\bibitem{Derkach2024}
A.~Derkach, \emph{Spectral Analysis of Pseudo-Differential Operators with
  Discontinuous Symbols}. Ph.D. thesis, UCL (University College London), 2024.
  Copyright {\copyright} The Author 2024. Original content in this thesis is
  licensed under the terms of the Creative Commons Attribution-NonCommercial
  4.0 International (CC BY-NC 4.0) Licence
  (https://creativecommons.org/licenses/by-nc/4.0/). Any third-party copyright
  material present remains the property of its respective owner(s) and is
  licensed under its existing terms. Access may initially be restricted at the
  author's request.

\bibitem{Evans1998}
L.~C. Evans, \emph{Partial differential equations}, \emph{Graduate Studies in
  Mathematics}, vol.~19. American Mathematical Society, Providence, RI, 1998.

\bibitem{Flandrin1988}
P.~Flandrin, \emph{Maximum signal energy concentration in a time-frequency
  domain}. \emph{ICASSP-88., International Conference on Acoustics, Speech, and
  Signal Processing}, 2176--2179, IEEE, 1988.

\bibitem{Folland_1989}
G.~B. Folland, \emph{Harmonic analysis in phase space}, \emph{Annals of
  Mathematics Studies}, vol. 122. Princeton University Press, Princeton, NJ,
  1989.

\bibitem{Gro2001}
K.~Gr\"ochenig, \emph{Foundations of time-frequency analysis}. Applied and
  Numerical Harmonic Analysis, Birkh\"auser Boston, Inc., Boston, MA, 2001.

\bibitem{Hoermander1993}
L.~H\"ormander, \emph{The Analysis of Linear Partial Differential Operators I}.
  Springer-Verlag, Berlin-New York, 1993.

\bibitem{Lerner_2024}
N.~Lerner, \emph{Integrating the {W}igner distribution on subsets of the phase
  space, a survey}, \emph{Memoirs of the European Mathematical Society},
  vol.~12. EMS Press, Berlin, 2024.

\bibitem{Ramanathan1993}
J.~Ramanathan and P.~Topiwala, \emph{Time-Frequency Localization via the Weyl
  Correspondence}. SIAM J. Math. Anal. \textbf{24(5)}: 1378--1393, 1993.

\bibitem{Simon2005}
B.~Simon, \emph{Trace Ideals and Their Applications}, \emph{Mathematical
  Surveys and Monographs}, vol. 120. Second edn., American Mathematical
  Society, Providence, RI, 2005.

\bibitem{SlePol1961}
D.~Slepian and H.~O. Pollak, \emph{Prolate spheroidal wave functions, {F}ourier
  analysis and uncertainty. {I}}. Bell System Tech. J. \textbf{40}: 43--63,
  1961.

\bibitem{Sobolev1991}
A.~V. Sobolev, \emph{Weyl asymptotics for the discrete spectrum of the
  perturbed {H}ill operator}. \emph{Estimates and asymptotics for discrete
  spectra of integral and differential equations ({L}eningrad, 1989--90)},
  \emph{Adv. Soviet Math.}, vol.~7, 159--178, Amer. Math. Soc., Providence, RI,
  1991.

\bibitem{Sobolev2014}
A.~V. Sobolev, \emph{On the {S}chatten-von {N}eumann Properties of Some
  Pseudo-Differential Operators}. J. Funct. Anal. \textbf{\textbf{266}(9)}:
  5886--5911, 2014.

\bibitem{Sob_Sol_2002}
A.~V. Sobolev and M.~Solomyak, \emph{Schr\"odinger operators on homogeneous
  metric trees: spectrum in gaps}. Rev. Math. Phys. \textbf{14(5)}: 421--467,
  2002.

\bibitem{Weidl_1993}
T.~Weidl, \emph{General operator ideals of weak type}. St. Petersburg Math. J.
  \textbf{4(3)}: 503–525, 1993.

\end{thebibliography}
\end{document}